\numberwithin{equation}{subsection}
\newtheorem{theorem}{Theorem}
\numberwithin{theorem}{section}
\newtheorem{lemma}[theorem]{Lemma}
\newtheorem{proposition}[theorem]{Proposition}
\newtheorem{corollary}[theorem]{Corollary}
\newtheorem{conj}[theorem]{Conjecture}
\theoremstyle{definition}
\newtheorem{defn}[theorem]{Definition}
\theoremstyle{remark}
\newtheorem{rem}[theorem]{Remark}
\newtheorem*{claim}{Claim}
\newtheorem{para}[theorem]{}
\newcommand{\bC}{\mathbb{C}}
\newcommand{\bF}{\mathbb{F}}
\newcommand{\bG}{\mathbb{G}}
\newcommand{\bH}{\mathbb{H}}
\newcommand{\bI}{\mathbb{I}}
\newcommand{\bN}{\mathbb{N}}
\newcommand{\bP}{\mathbb{P}}
\newcommand{\bQ}{\mathbb{Q}}
\newcommand{\bR}{\mathbb{R}}
\newcommand{\bZ}{\mathbb{Z}}
\newcommand{\cF}{\mathcal{F}}
\newcommand{\cH}{\mathcal{H}}
\newcommand{\cL}{\mathcal{L}}
\newcommand{\cN}{\mathcal{N}}
\newcommand{\cO}{\mathcal{O}}
\newcommand{\cP}{\mathcal{P}}
\newcommand{\cT}{\mathcal{T}}
\newcommand{\cX}{\mathcal{X}}
\newcommand{\fp}{\mathfrak{p}}
\newcommand{\fsp}{\mathfrak{s}\mathfrak{p}}
\newcommand{\dA}{A^\vee}
\newcommand{\dB}{B^\vee}
\newcommand{\et}{{\text{\'et}}}
\newcommand{\gs}{{\gamma_\sigma}}
\newcommand{\rs}{{\rho_\sigma}}
\newcommand{\ob}{\overline{\beta}}
\newcommand{\ub}{\underline{\beta}}
\newcommand{\dR}{_{\mathrm{dR}}}
\newcommand{\cris}{_{\mathrm{cris}}}
\newcommand{\MT}{_{\mathrm{MT}}}
\newcommand{\aT}{{\mathrm{aT}}}
\DeclareMathOperator{\GL}{GL}
\DeclareMathOperator{\GSp}{GSp}
\DeclareMathOperator{\Sp}{Sp}
\DeclareMathOperator{\SO}{SO}
\DeclareMathOperator{\Gal}{Gal}
\DeclareMathOperator{\End}{End}
\DeclareMathOperator{\Hom}{Hom}
\DeclareMathOperator{\Lie}{Lie}
\DeclareMathOperator{\Res}{Res}
\DeclareMathOperator{\Spec}{Spec}
\DeclareMathOperator{\ad}{ad}
\DeclareMathOperator{\Span}{Span}
\DeclareMathOperator{\der}{der}
\DeclareMathOperator{\Fil}{Fil}
\DeclareMathOperator{\Gr}{Gr}
\begin{document}

\title[Cycles in the de Rham cohomology of abelian varieties]{Cycles in the de Rham cohomology of abelian varieties over number fields}

\author {Yunqing Tang}

\address{Department of Mathematics, Princeton University, Fine Hall, Washington Road\\
Princeton, New Jersey 08540, USA}
\email{yunqingt@math.princeton.edu} 

\begin{abstract}
In his 1982 paper, Ogus defined a class of cycles in the de Rham cohomology of smooth proper varieties over number fields. This notion is a crystalline analogue of $\ell$-adic Tate cycles. In the case of abelian varieties, this class includes all the Hodge cycles by the work of Deligne, Ogus, and Blasius. Ogus predicted that such cycles coincide with Hodge cycles for abelian varieties. In this paper, we confirm Ogus' prediction for some families of abelian varieties. These families include geometrically simple abelian varieties of prime dimension that have nontrivial endomorphism ring. The proof uses a crystalline analogue of Faltings' isogeny theorem due to Bost and the known cases of the Mumford--Tate conjecture.
\end{abstract}
\keywords{absolute Tate cycles, Hodge cycles, Mumford--Tate conjecture, algebraization theorems}
\subjclass[2010]{11G10, 14F40,14G40,14K15}
\maketitle

\section{Introduction} 
\subsection{The Mumford--Tate conjecture and a conjecture of Ogus}\label{intro_Ogus}
Given a polarized abelian variety $A$ over a number field $K$, the Mumford--Tate conjecture for $A$ concerns the $\ell$-adic algebraic monodromy group $G_\ell$, which is defined to be the Zariski closure of the image of the Galois representation of $\Gal(\bar{K}/K)$ on the $\ell$-adic Tate module of $A$. The conjecture predicts that the connected component $G_\ell^\circ$ of $G_\ell$ coincides with the base change $G\MT\otimes \bQ_\ell$ of the Mumford--Tate group $G\MT$ of $A$. In terms of cycles, the conjecture asserts that $\ell$-adic Tate cycles coincide with $\bQ_\ell$-linear combinations of Hodge cycles. Here elements in the tensor algebra of the $\bQ$-coefficient Betti cohomology are called \emph{Hodge cycles} if they lie in the $(0,0)$-piece of the Hodge decomposition; elements in the tensor algebra of the $\ell$-adic \'etale cohomology are called \emph{$\ell$-adic Tate cycles} if they are, after a suitable Tate twist, fixed by $\Gal(\bar{L}/L)$ for some finite extension $L$ of $K$.

As a crystalline analogue, Ogus defined absolute Tate cycles for any smooth projective variety $X$ over a number field $K$ and predicted that, via the de Rham--Betti comparison, absolute Tate cycles coincide with absolute Hodge cycles (\cite{Ogus}*{Hope 4.11.3}). 
For any finite extension $L$ of $K$, an element in the tensor algebra of $\bigoplus_{i=0}^{2\dim X}H^i\dR(X/K)\otimes L$ is called an \emph{absolute Tate cycle} (\emph{loc. cit.} Def.~4.1) if it is fixed by all but finitely many crystalline Frobenii $\varphi_v$. 
When $v$ is unramified and $X$ has good reduction at $v$, the Frobenius $\varphi_v$ can be viewed as acting on $H^i_{\mathrm{dR}}(X/K)\otimes K_v$ via the canonical isomorphism between the de Rham and the crystalline cohomologies. Ogus proved that all the Hodge cycles are absolute Tate for abelian varieties and verified his prediction when $X$ is the product of abelian varieties with complex multiplication, Fermat hypersurfaces, and projective spaces (\emph{loc. cit.} Thm. 4.16). When $X$ is an elliptic curve, Ogus' conjecture follows from Serre--Tate theory.

\subsection{The main results}

Our first result is the following:

\begin{theorem}[\Cref{Qcase}]\label{thm_Q}
If $A$ is a polarized abelian variety over $\bQ$ and its $\ell$-adic algebraic monodromy group $G_\ell$ is connected, then the Mumford--Tate conjecture for $A$ implies that the absolute Tate cycles of $A$ coincide with its Hodge cycles. 
\end{theorem}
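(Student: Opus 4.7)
The approach is Tannakian. Let $G\dR \subset \GL(H^1\dR(A/\bQ))$ be the $\bQ$-algebraic subgroup that pointwise stabilizes every de Rham--Tate cycle in every tensor construction. Because Hodge cycles are de Rham--Tate by the results of Deligne, Ogus, and Blasius recalled in the introduction, one has $G\dR \subset G\MT$, and by Tannakian duality the theorem is equivalent to the reverse inclusion $G\dR = G\MT$.

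Two essentially elementary ingredients feed into the proof. First, by the very definition of an absolute Tate cycle, the crystalline Frobenius $\varphi_p$ at every prime $p$ of good reduction lies in $G\dR(\bQ_p)$. Second, the Mumford--Tate conjecture together with the connectedness of $G_\ell$ gives $G_\ell = G\MT \otimes \bQ_\ell$ for every $\ell$, and by Chebotarev density the $\ell$-adic Frobenius conjugacy classes $\{\mathrm{Frob}_q\}_q$ are Zariski-dense in $G\MT \otimes \bQ_\ell$. Since $\varphi_q$ and $\mathrm{Frob}_q$ share the same characteristic polynomial on $H^1$ and are both semisimple on an abelian variety, they determine the same conjugacy class of $G\MT$ over $\overline{\bQ_p}$.

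The central step is then to use Bost's crystalline analogue of Faltings' isogeny theorem to promote the density inside $G\MT$ inferred above to an equality with $G\dR$. Bost's theorem ensures that formal or analytic data on the de Rham cohomology of (products of) abelian varieties which are stable under crystalline Frobenii at almost all $p$ come from genuine algebraic objects---namely elements of $\End(A) \otimes \bQ$ or divisor classes in the N\'eron--Severi group---whose de Rham realizations are precisely the generators of the Hodge tensor algebra. Reading this through the Tannakian formalism, one expects to obtain $\Lie(G\dR) = \Lie(G\MT)$; combined with the connectedness of $G\MT$ (a consequence of connectedness of $G_\ell$ and Mumford--Tate), this forces $G\dR = G\MT$.

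The main obstacle I anticipate is this last translation: carefully converting Bost's algebraicity statement for formal subschemes into an equality of $\bQ$-algebraic groups cut out by tensor invariants. The real-Betti condition in the definition of de Rham--Tate cycles plays a crucial role here, for it pins the $\bQ_p$-coefficients produced by Mumford--Tate inside the common real structure $H^*_{\mathrm{B}}(A(\bC),\bQ) \otimes_\bQ \bR$ embedded in $H^*\dR(A/\bQ) \otimes_\bQ \bC$; combined with faithful flatness of $\bQ \to \bQ_p$, this forces those coefficients to be rational, so that every de Rham--Tate cycle is a $\bQ$-linear combination of Hodge cycles.
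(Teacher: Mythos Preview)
Your proposal gathers the right raw ingredients---Bost's theorem, the inclusion $G\dR\subset G\MT$, and the fact that $\varphi_p\in G\dR(\bQ_p)$ for almost all $p$---but the mechanism you sketch for turning these into $G\dR=G\MT$ is not a proof. Two concrete points:

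\emph{(i) The density argument does not close.} Zariski density of $\ell$-adic Frobenii is a statement inside $G_\ell=G\MT\otimes\bQ_\ell$ for a \emph{fixed} $\ell$, whereas the crystalline Frobenii $\varphi_p$ live in $G\dR(\bQ_p)$ for \emph{varying} $p$. Katz--Messing only tells you that $\varphi_p$ and $\mathrm{Frob}_p$ have the same characteristic polynomial, i.e.\ lie in the same $G\MT(\overline{\bQ})$-conjugacy class; it does not let you transport the density statement across completions to conclude that the $\varphi_p$ are Zariski dense in $G\MT$ in any useful sense. So the passage ``density inside $G\MT$ $\Rightarrow$ $\Lie G\dR=\Lie G\MT$'' has no content as written. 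The final paragraph on the real-Betti condition and faithful flatness of $\bQ\to\bQ_p$ does not salvage this; the real structure is used in the paper only to show $G\dR$ is reductive, not to descend $\bQ_p$-cycles to $\bQ$.

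\emph{(ii) You are missing the key structural lemma.} The paper's argument is organized around Zarhin's key lemma: if $H\subset G\subset\GL(V)$ are connected reductive groups with the \emph{same rank} and the \emph{same centralizer} in $\End(V)$, then $H=G$. The paper applies this with $H=G^\circ\dR$ and $G=G\MT$. The rank equality comes from the Mumford--Tate conjecture via Serre's theorem that Frobenius tori $T_p$ are maximal tori of $G_\ell$ for a density-one set of $p$; since $T_p\subset G^\circ\dR$ (here one uses that $T_p$ is connected, again by Serre, and this is precisely where connectedness of $G_\ell$ enters), both groups have the same rank. The centralizer equality comes from Bost: any $s$ in the centralizer of $G^\circ\dR$ is fixed by $\varphi_p$ for those density-one $p$ with $\varphi_p\in G^\circ\dR(\bQ_p)$, hence $s\in\End^\circ(A)$, which is also the centralizer of $G\MT$. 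You invoke Bost in roughly the right spirit, but you never isolate the centralizer statement, and you do not invoke Zarhin's lemma or the rank comparison at all. Without these, there is no bridge from ``Bost controls endomorphisms'' to ``the groups coincide.''
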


The Mumford--Tate conjecture for abelian varieties is known in many cases by the work of many people including Serre, Chi, Ribet, Tankeev, Pink, Banaszak, Gajda, Kraso\'n, and Vasiu. We refer the reader to \cite{Pink}, \cite{Vasiu}, \cite{BGK1}, \cite{BGK2} and their references. 

In \cref{int_pf}, we will explain the difficulty in generalizing \Cref{thm_Q} to an arbitrary number field $K$. However, we translate and generalize results in Serre's theory of Frobenius tori to our crystalline setting and then obtain a partial generalization of \Cref{thm_Q} to number fields by applying a theorem of Noot on the formal deformation space of an abelian variety with ordinary reduction. The following theorem presents two typical cases of our generalization and we refer the reader to \Cref{thm_K} for the complete statement.

\begin{theorem}\label{thm_main}
Let $A$ be a polarized abelian variety over $K$. If $A_{\bar{K}}$ is simple and either
\begin{enumerate}
\item $A$ is an abelian surface and $K$ is Galois over $\bQ$ of odd degree, or
\item the dimension of $A$ is a prime number and $\End_{\bar{K}}(A)$ is not $\bZ$,
\end{enumerate}
then the absolute Tate cycles of $A$ coincide with its Hodge cycles.
\end{theorem}

\subsection{A result without assuming the Mumford--Tate conjecture}
When the abelian variety $A$ over $K$ satisfies that $\End_{\bar{K}}(A)=\bZ$, Pink \cite{Pink} showed that there is a $\bQ$-model of $G_\ell^\circ$ which is independent of $\ell$ and ``looks like'' $G\MT$ in the following sense. The group $G\MT$ (resp. the $\bQ$-model of $G^\circ_\ell$) with its tautological faithful absolutely irreducible representation $H^1_{\mathrm{B}}(A,\bQ)$ (resp. $H^1_{\et}(A_{\bar{K}},\bQ_\ell)$) is an (absolutely) irreducible \emph{strong Mumford--Tate pair} over $\bQ$: the group is reductive and generated over $\bQ$ by the image of a cocharacter of weights $(0,1)$. Here weights mean the weights of the cocharacter composed with the faithful representation. Based on the work of Serre, Pink gave a classification of irreducible Mumford--Tate pairs. See \cite{Pink}*{Prop. 4.4, 4.5, and Table 4.6}.

In the crystalline setting, we define the \emph{absolute Tate group} $G_\aT$ of a polarized abelian variety $A$ over $\bQ$ to be the algebraic subgroup over $\bQ$ of $\GL(H^1_{\mathrm{dR}}(A/\bQ))$ stabilizing all of the absolute Tate cycles and we refer the reader to \cref{sub_GdR} for the definition of $G_\aT$ of $A$ over number field $K$. This group is reductive and we show that Pink's classification also applies to $G_\aT$ in the following situation:

\begin{theorem}[\Cref{MTpair}]\label{thm_QMT}
Assume that $A$ is a polarized abelian variety over $\bQ$ and the $\ell$-adic algebraic monodromy group of $A$ is connected. If $\End_{\bar{\bQ}}(A)=\bZ$, then the neutral connected component of $G_{\aT}$ with its tautological representation is an irreducible strong Mumford--Tate pair over $\bQ$.
\end{theorem}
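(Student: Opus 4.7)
The plan is to parallel Pink's theorem on the $\ell$-adic side, with Bost's crystalline isogeny theorem in place of Faltings'. Writing $V := H^1\dR(A/\bQ)$ and $g := \dim A$, one has to verify three properties of $G\dR^\circ$: (a) reductivity, (b) absolute irreducibility of its tautological action on $V$, and (c) generation over $\bQ$ by the image of a cocharacter whose weights on $V$ are $0$ and $1$. Property (a) is immediate from the real condition imposed on de Rham--Tate cycles.

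For (b), if $G\dR^\circ$ were to act reducibly on $V$, passing to isotypic components would produce a nontrivial projector $\pi \in \End(V)$ that is $G\dR$-invariant. By the Tannakian description of invariants of a stabilizer group, $\pi$ is itself a de Rham--Tate cycle in $\End(V)$. Bost's crystalline analogue of Faltings' isogeny theorem identifies the de Rham--Tate endomorphisms of $V$ with $\End_{\bar\bQ}(A) \otimes \bQ$, which equals $\bQ$ under our hypothesis $\End_{\bar\bQ}(A) = \bZ$. Hence $\pi \in \bQ \cdot \mathrm{id}$, contradicting nontriviality, so $V$ is absolutely irreducible under $G\dR^\circ$.

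For (c), I extract the cocharacter from a crystalline Frobenius at a prime $p$ of good ordinary reduction for $A$. At such a prime the Newton slopes of $\varphi_p$ on $V \otimes \bQ_p$ are $0$ and $1$, each of multiplicity $g$, and the associated slope cocharacter $\mu_p : \bG_m \to \GL(V \otimes \bar\bQ_p)$ has weights $(0,1)$. Since $\varphi_p$ lies in $G\dR(\bQ_p)$ up to a central Tate twist, so does the image of $\mu_p$, and hence $\mu_p$ factors through $G\dR^\circ \otimes \bar\bQ_p$. Let $H \subseteq G\dR^\circ$ be the smallest $\bQ$-algebraic subgroup containing every $\Gal(\bar\bQ/\bQ)$-conjugate of the image of $\mu_p$; then $(H,V)$ is a strong Mumford--Tate pair by construction.

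The main obstacle is proving the equality $H = G\dR^\circ$. Because Hodge cycles are de Rham--Tate by Deligne--Ogus--Blasius, we have the inclusion $G\dR^\circ \subset G\MT$, placing both groups inside $\GSp(V)$. Combining the absolute irreducibility of $V$, the classification of irreducible strong Mumford--Tate pairs inside $\GSp_{2g}$, and the variation of Frobenius tori as $p$ ranges over primes of ordinary reduction, one expects to rule out any proper inclusion $H \subsetneq G\dR^\circ$; this step is the de Rham counterpart of Serre's Frobenius-torus arguments. A subsidiary difficulty is guaranteeing a sufficiently rich supply of ordinary primes for $A$, which is unproblematic for the small-dimensional cases of interest but open in general.
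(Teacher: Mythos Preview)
Your outline has the right shape, but two of the three steps have genuine gaps, and the approach diverges from the paper's in ways that matter.

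\textbf{Step (b).} The deduction ``a $G\dR^\circ$-isotypic projector $\pi$ is $G\dR$-invariant, hence a de Rham--Tate cycle'' is not valid. Since $G\dR^\circ$ is normal in $G\dR$, the finite quotient $G\dR/G\dR^\circ$ permutes the $G\dR^\circ$-isotypic components; when $V$ is $G\dR$-irreducible (which Bost's theorem does give you, since $\End_{\bar\bQ}(A)=\bZ$) this permutation is transitive, so no nontrivial isotypic projector is $G\dR$-invariant. The paper closes this gap differently: because $K=\bQ$ and $G_\ell$ is connected, Serre's theorem gives a density-one set of primes $p$ with $T_p$ connected, hence $\varphi_p\in T_p(\bQ_p)\subset G\dR^\circ(\bQ_p)$. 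Any element of $\End(V)$ commuting with $G\dR^\circ$ is therefore fixed by $\varphi_p$ for a density-one set of $p$, i.e.\ is a $1$-de Rham--Tate cycle, and the strengthened Bost theorem (\Cref{Bost}) forces it into $\End^\circ(A)\otimes\bQ=\bQ$. This is exactly \Cref{Qcase}; you should invoke it rather than the isotypic-projector argument.

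\textbf{Step (c).} Using the Newton cocharacter at an ordinary prime is fragile for two reasons you partly acknowledge: a positive-density supply of ordinary primes is not known in general, and the cocharacter lives over $\bar\bQ_p$, so the phrase ``smallest $\bQ$-subgroup containing every $\Gal(\bar\bQ/\bQ)$-conjugate of the image of $\mu_p$'' needs justification. The paper avoids both issues by using the \emph{Hodge} cocharacter instead: Wintenberger's canonical splitting gives $\mu_v$ factoring through $G\dR^\circ$ for every good $v$, and \Cref{Kbarcochar} produces a $\bar\bQ$-cocharacter $\mu$ in its conjugacy class. One then takes $G$ to be the smallest \emph{normal} $\bQ$-subgroup of $G\dR^\circ$ containing the image of $\mu$; normality is what makes $G$ automatically generated by the $G\dR^\circ(\bar\bQ)\rtimes\Gal(\bar\bQ/\bQ)$-conjugates of $\mu$.

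\textbf{The step $H=G\dR^\circ$.} Your sketch (``combining irreducibility, the classification, and variation of Frobenius tori, one expects\ldots'') is not a proof. The paper's argument is clean and does not use the classification: write $G\dR^\circ=G\cdot H$ with $H$ the normal complement; then $H$ centralizes $G$, and one shows (via the same density-one Frobenius argument plus a weakly-admissible/Newton-weight computation) that the centralizer of $G$ in $\End(V)$ is $\End^\circ(A)$. Under the hypothesis $\End^\circ(A)=\bQ$ (more generally, commutative), this forces $H\subset Z^\circ(G\dR^\circ)$, and one finishes by checking $Z^\circ(G\dR^\circ)\subset Z^\circ(G\MT)\subset T_p\subset G$ using Vasiu's result that $Z^\circ(G\MT\otimes\bQ_\ell)=Z^\circ(G_\ell)$.
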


It is worth noting that this theorem shows unconditionally that $G_{\aT}$ is of a very restricted form.

\subsection{The strategy of the proofs}\label{int_pf}
Some fundamental ingredients in the proofs of the Mumford--Tate conjecture are Faltings' isogeny theorem, $p$-adic Hodge theory (especially the weak admissibility), and Serre's theory of Frobenius tori. The later two still play important roles in our proofs and in analogy with Faltings' isogeny theorem, we use a theorem of Bost (\cite{B06}*{Thm. 6.4}) asserting that all the absolute Tate cycles lying in $\End(H^1_{\mathrm{dR}}(A/K)\otimes L)$ are algebraic and hence Hodge cycles. 

The key ingredient in the proof of \cite{B06}*{Thm.~6.4} is an algebraization theorem of Bost on foliations in commutative algebraic groups. We use this algebraization theorem to show that the absolute Tate group $G_\aT$ is reductive.

We first explain the main difficulties when $K$ is not equal to $\bQ$. For simplicity, we focus on the case that $\End_{\bar{K}}(A)=\bZ$. Pink's classification of Mumford--Tate pairs applies to connected reductive groups with an absolutely irreducible representation. 
The key step is to prove the irreducibility of $H^1\dR(A/K)$ as a $G^\circ_\aT$-representation. Notice that $G_\aT$ is not \emph{a priori} connected. 
In the $\ell$-adic setting, Serre, using the Chebotarev density theorem, showed that $G_\ell$ will be connected after passing to a finite extension.
There seems to be no easily available analogue argument for $G_\aT$. Another difficulty is that the centralizer of $G_\aT$ in $\End(H^1_{\mathrm{dR}}(A/K))$ does not \emph{a priori} coincide with the set of linear combinations of absolute Tate cycles in $\End(H^1_{\mathrm{dR}}(A/K))$. If it were the case, then Bost's theorem would imply that $H^1\dR(A/K)$ is at least an absolutely irreducible $G_\aT$-representation.

When $K=\bQ$ and $G_\ell$ is connected, the second difficulty disappears and the absolute Frobenii coincide with the relative ones. Thus the connectedness of $G_\ell$ implies that $G_\aT$ is \emph{almost connected}: $\varphi_p\in G^\circ_\aT(\bQ_p)$ for all $p$ inside a set of primes of natural density $1$. This fact implies that $H^1\dR(A/\bQ)\otimes \bar{\bQ}$ remains absolutely irreducible as a $G^\circ_\aT$-representation. This is a consequence of a strengthening of Bost's result (\Cref{Bost}): for any abelian variety $A$ over $K$, if $s\in \End(H^1\dR(A/K))$ satisfies the condition that $\varphi_v(s)=s$ for all finite places $v$ above a set of rational primes of natural density one, then $s$ is algebraic. We record a proof of this strengthening based on the work of Bost, Gasbarri and Herblot.

For a general $K$, the group $G_\ell$ only contains information about the relative Frobenii $\varphi_v^{m_v}$, where $m_v=[K_v:\bQ_p]$. Without additional inputs, it seems that the connectedness of $G_\ell$ would only imply that a variant of $G_\aT$ defined using $\varphi_v^{m_v}$ is almost connected. However, we do not have a variant of Bost's theorem with relative Frobenii beyond a couple of cases when the Mumford--Tate groups are small.

We now illustrate the idea of the proof of \Cref{thm_main}. The main task is to show that the centralizer of $G^\circ_\aT$ in $\End(H^1_{\mathrm{dR}}(A/K))$ coincides with that of $G\MT$. In case (2), since the Mumford--Tate group is not too large, we use Bost's theorem to show that otherwise $G_\aT^\circ$ must be a torus. Then we deduce that $A$ must have complex multiplication using a theorem of Noot (\cite{Noot}*{Thm. 2.8}) on formal deformation spaces at a point of ordinary reduction and hence we reduce this case to the case when $A$ has complex multiplication.
To exploit the strengthening of Bost's result to tackle case (1), we need to understand $\varphi_v$ for all finite places $v$ of $K$ lying over some set $M$ of rational primes of density $1$. While Serre's theorem on the ranks of Frobenius tori only provides information about completely split primes, we prove a refinement when $G_\ell=\GSp_{2g}$ that takes into account the other primes. Our result asserts that there exists a set $M$ of rational primes of density $1$ such that the Frobenius tori are of maximal rank for all $v$ lying over $M$. The rest of the argument is similar to that of case (2).

\subsection*{Organization of the paper}
\Cref{thm_Q} and \Cref{thm_QMT} are proved in \cref{MumfordTate} and \Cref{thm_main} is proved in \cref{connected}. 

As a first step, we prove in \cref{reductive} that, given an abelian variety $A$ over a number field $K$, its absolute Tate group $G_\aT$ is reductive. The key case is when $A$ is defined over $\bQ$ with $G_\ell$ connected. It is enough to show that the faithful representation $H^1\dR(A/\bQ)$ of $G_\aT$ is completely reducible. We use the algebraization theorem of Bost, Serre--Tate theory, and Grothendieck--Messing theory to deduce this from the semisimplicity of the isogeny category of abelian varieties.

In \cref{MumfordTate}, we recall the Mumford--Tate conjecture and the theory of Frobenius tori initiated by Serre. A result of Katz--Messing \cite{KM} enables us to view the Frobenius tori as subgroups of both $G_{\aT}$ and $G_\ell$. We use this observation and Serre's theorem on Frobenius tori to show that the Mumford--Tate conjecture implies that $G_\aT$ and $G\MT$ have the same rank. We then use this result and discussions in \cref{reductive} to prove \Cref{thm_Q} and \ref{thm_QMT}. We also prove two results that will be used in the proof of \Cref{thm_main}: our refinement of the results of Serre and Chi on the rank of Frobenius tori and an application of Noot's theorem on formal deformation.

In \cref{connected}, to prove \Cref{thm_main}, we first study the irreducible $G^\circ_\aT$-subrepresentations of $H^1\dR(A/K)\otimes \bar{K}$. This part is valid for all abelian varieties over number fields without assuming the Mumford--Tate conjecture. A main input is the result of Pink that $G_\ell$ with its tautological representation is a weak Mumford--Tate pair over $\bQ_\ell$. We then complete the proof of \Cref{thm_main} in \cref{sub_pf}.
 
In \cref{density}, we further generalize the result of Bost using the idea of Gasbarri and Herblot. The discussion contains a proof of \Cref{Bost}, the strengthening of Bost's theorem. We also discuss the consequence of our generalization on the absolute Tate cycles for abelian surfaces over quadratic fields. This is the first case when \Cref{thm_main} does not apply. 

\subsection*{Notation and conventions}
Let $K$ be a number field and $\cO_K$ its ring of integers. 
For a place $v$ of $K$, either archimedean or finite, let $K_v$ be the completion of $K$ with respect to $v$.
When $v$ is finite, we denote by $\fp$, $\cO_v$, and $k_v$ the corresponding prime ideal, the ring of integers, and residue field of $K_v$. We also denote by $p_v$ the characteristic of $k_v$ and when there is no confusion, we will also write $p$ for $p_v$. If there is no specific indication, $L$ denotes a finite extension of $K$.

For any vector space $V$, let $V^\vee$ be its dual and we use $V^{m,n}$ to denote $V^{\otimes m}\otimes (V^\vee)^{\otimes n}$. For a vector space $V$, we use $\GL(V), \GSp(V), \dots$ to denote the algebraic groups rather than the rational points of these algebraic groups.

For any scheme $X$ or vector bundle/space $V$ over $\Spec(R)$, we denote by $X_R'$ or $V_R'$ the base change to $\Spec R'$ for any $R$-algebra $R'$. For any archimedean place $\sigma$ of $K$ and any variety $X$ over $K$, we use $X_\sigma$ to denote the base change of $X$ to $\bC$ via a corresponding embedding $\sigma:K\rightarrow \bC$.

Throughout the paper, $A$ is a polarized abelian variety of dimension $g$ defined over $K$. 
For any field $F$ containing $K$, we denote by $H^i\dR(A/F)$ the de Rham cohomology group $$H^i\dR(A_F/F)=\bH^i(A_F,\Omega^{\bullet}_{A_F/F})=H^i\dR(A/K)\otimes_K F.$$
If $A$ has good reduction at $v$, we use $H^i\dR(A/k_v)$ to denote $H^i\dR(A_{k_v}/k_v)=H^i\dR(A_{\cO_v}/\cO_v)\otimes_{\cO_v}k_v$.

Let $v$ be a finite place of $K$.  If $A$ has good reduction at $v$ and $v$ is unramified in $K/\bQ$, we use $\varphi_v$ to denote the crystalline Frobenius acting on $H^1_{\mathrm{dR}}(A/K_v)^{m,n}$ via the canonical isomorphism to the crystalline cohomology $(H^1_{\mathrm{cris}} (A_{k_v}/W(k_v))\otimes K_v)^{m,n}$. 

The polarization of $A$ gives a natural identification between $H^1_{\mathrm{dR}}(A/K)$ and $(H^1_{\mathrm{dR}}(A/K))^\vee(-1)$, where $(-1)$ denotes the Tate twist, preserving filtration and Frobenius actions. Therefore, the Tate twist is implicitly considered when we say that an absolute Tate cycle is an element in $H^1_{\mathrm{dR}}(A/K_v)^{m,n}$ for some $m,n$.

We use $\dA$ to denote the dual abelian variety of $A$ and use $E(A)$ to denote the universal vector extension of $A$. We use $\End^\circ_?(A)$ to denote $\End_?(A)\otimes \bQ$, where $?$ can be $K,L$ or $\bar{K}$. The subscription is omitted if $\End_K(A)=\End_{\bar{K}}(A)$.

A reductive algebraic group in our paper may be nonconnected. Given an algebraic group $G$, we use $G^\circ$ to denote the neutral connected component and use $Z(G)$ to denote the center of $G$. When $G$ is reductive, the rank of $G$ means the rank of any maximal torus of $G$.

For any field $F$, we use $\bar{F}$ to denote a chosen algebraic closure of $F$. For any finite dimensional vector space $V$ over $F$ and any subset $S$ of $V$, we use $\Span_F(S)$ to denote the smallest sub $F$-vector space of $V$ containing $S$.

\subsection*{Acknowledgements}
I thank Mark~Kisin for introducing this problem to me and for the enlightening discussions and encouragement. I thank Olivier~Benoist, George~Boxer,Victoria Cantoral-Farf\'an, K\k{e}stutis~\v{C}esnavi\v{c}ius, Brian~Conrad, Peter~Jossen, Erick~Knight, Yifeng~Liu, Curtis~McMullen, Ananth~Shankar, Koji~Shimizu, Cheng-Chiang~Tsai, Shou-Wu~Zhang, Yihang~Zhu for helpful discussions or/and remarks on previous drafts of this paper. I thank the organizers and the participants of the 2016 JAVA conference. The interactions during the conference were both helpful for improving the presentation of the results in this paper and stimulating for future work. I thank the anonymous referees for their comments/suggestions on both the mathematics and the writing of this paper. 

\numberwithin{theorem}{subsection}
\section{Absolute Tate cycles and a result of Bost}\label{reductive}
In this section, we recall the definition of absolute Tate cycles and their basic properties in \cref{de Rham--Tate}. We define absolute Tate groups in \cref{sub_GdR} and this group will be our main tool to study Ogus' conjecture. We discuss Bost's theorem on absolute Tate cycles and its strengthening in \cref{sub_cent} and use Bost's algebraization theorem to prove that absolute Tate groups are reductive.

\subsection{Absolute Tate cycles}\label{de Rham--Tate}
\begin{defn}[\cite{Ogus}*{Def.~4.1}]\label{def_aTcycle}
An element $s\in (H^1_{\mathrm{dR}}(A/L))^{m,n}$ is called an \emph{absolute Tate cycle} of the abelian variety $A$ (over $L$) if there exists a finite set $\Sigma$ of finite places of $L$ such that  for all finite places $v\notin \Sigma$, $\varphi_v(s)=s$.
\end{defn}
\begin{rem}\label{rmk_def}
\begin{enumerate}
\item 
By \cite{Ogus}*{Cor. 4.8.1, 4.8.3}, an element $s\in (H^1_{\mathrm{dR}}(A/K))^{m,n}$ is absolute Tate if and only if its base change in $(H^1_{\mathrm{dR}}(A/L))^{m,n}$ is absolute Tate and the set of absolute Tate cycles over $L$ is stable under the natural action of $\Gal(L/K)$ (on the coefficient of de Rham cohomology groups). 
\item
By \cite{Ogus}*{Cor. 4.8.2}, although one could define absolute Tate cycles over arbitrary field $L$ containing $K$, we only need to consider cycles over number fields since any absolute Tate cycle must be defined over $\bar{\bQ}$ and hence over some number field.
\end{enumerate}
\end{rem}

Algebraic de Rham cohomology is equipped with a natural filtration $\Fil^\bullet$ (called \emph{the Hodge filtration}) from the $E_1$-page of Hodge-to-de Rham spectral sequence; see, for example, \cite{K72}*{1.4.1}. We have the following important fact.
\begin{lemma} [\cite{Ogus}*{Prop. 4.15}]\label{Fil0}
If $s\in H^1_{\mathrm{dR}}(A/L)^{m,n}$ is fixed by infinitely many $\varphi_v$ (for example, when $s$ is absolute Tate), then $s$ lies in $\Fil^0H^1_{\mathrm{dR}}(A/L)^{m,n}$. Moreover, if such $s$ lies in $\Fil^1H^1_{\mathrm{dR}}(A/L)^{m,n}$, then $s=0$.
\end{lemma}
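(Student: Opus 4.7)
The plan is to reduce to a local assertion at a single finite place $v$ of $L$ of good reduction with $\varphi_v(s) = s$. Since the Hodge filtration on $H^1\dR(A,L)^{m,n}$ is defined over $L$ and is compatible with base change to $L_v$, and since the hypothesis supplies (infinitely) many such $v$, it is enough to show that the image of $s$ in $H^1\dR(A,L_v)^{m,n}$ lies in $\mathrm{Fil}^0$ for any one such $v$ (resp.\ vanishes if it lies in $\mathrm{Fil}^1$). Fix such a $v$ and write $T \subset H^1\dR(A,L_v)^{m,n}$ for the integral lattice $H^1\cris(A_{k_v}/W(k_v))^{m,n}$ transported via the Berthelot--Ogus isomorphism. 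The central input is Mazur's strong divisibility inequality
\[
\varphi_v(\mathrm{Fil}^i T) \subset p^i T \qquad \text{for all } i \in \mathbb{Z},
\]
which follows from the analogous bound for the absolute crystalline Frobenius together with the fact that $\varphi_v$ preserves $T$.

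The ``moreover'' assertion is then almost immediate. If $s \in \mathrm{Fil}^1 H^1\dR(A,L_v)^{m,n}$ with $\varphi_v(s) = s$, rescale $s$ by a power of $p$ so that $s \in T \setminus pT$. Then $s = \varphi_v(s) \in \varphi_v(\mathrm{Fil}^1 T) \subset pT$, a contradiction.

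For the main assertion I would invoke Poincar\'e duality. The polarization of $A$ produces a perfect Frobenius-equivariant pairing
\[
\langle \cdot, \cdot \rangle \colon H^1\dR(A,L_v)^{m,n} \otimes H^1\dR(A,L_v)^{n,m} \longrightarrow L_v
\]
landing in weight zero, under which $\mathrm{Fil}^0 H^1\dR(A,L_v)^{m,n}$ is the annihilator of $\mathrm{Fil}^1 H^1\dR(A,L_v)^{n,m}$. For any $t \in \mathrm{Fil}^1 H^1\dR(A,L_v)^{n,m}$ and any $k \geq 0$, Frobenius-equivariance together with $\varphi_v(s) = s$ gives
\[
\langle s, t \rangle = \langle \varphi_v^k(s), \varphi_v^k(t) \rangle = \langle s, \varphi_v^k(t) \rangle.
\]
Passing to $\bar{L}_v$ and invoking the Dieudonn\'e--Manin decomposition, write $t = \sum_\lambda t_\lambda$ with $t_\lambda$ pure of slope $\lambda$. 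The argument of the previous paragraph, applied integrally to each pure-slope component, forces $t_\lambda = 0$ for every slope $\lambda \leq 0$; hence $t$ has only positive-slope components, and $\varphi_v^k(t_\lambda) \to 0$ $p$-adically as $k \to \infty$. Therefore $\langle s, t \rangle = 0$ for every such $t$, and $s$ lies in $\mathrm{Fil}^0$.

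The main technical point I expect to be the obstacle is the slope-by-slope vanishing in the final step: one must verify that each pure-slope piece of $\mathrm{Fil}^1 H^1\dR(A,L_v)^{n,m}$ of non-positive slope vanishes integrally, which combines Mazur's bound with the Dieudonn\'e--Manin integral structures after passing to a sufficiently large unramified extension. The remaining ingredients (compatibility of the Hodge filtration with scalar extension, Frobenius-equivariance of the polarization pairing, and the annihilator identification) are essentially formal.
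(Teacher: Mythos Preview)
Your ``moreover'' argument is correct and, unlike the paper's, works at a single prime $v$: Mazur's bound $\varphi_v(\mathrm{Fil}^1 T)\subset pT$ combined with $\varphi_v(s)=s$ yields infinite $p$-divisibility of $s$ in $T$, hence $s=0$. (The side remark that $\varphi_v$ preserves $T$ is wrong when $n>0$, but you do not actually use it.)

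The main assertion, however, cannot be reduced to a single prime, and the step you flagged is a genuine failure rather than a technicality. The Dieudonn\'e--Manin decomposition does not respect the Hodge filtration: for $t\in\mathrm{Fil}^1$ the components $t_\lambda$ need not lie in $\mathrm{Fil}^1$, so the argument of your previous paragraph does not apply to them. Concretely, take $A$ an elliptic curve with ordinary reduction at a degree-one place $v$, so $L_v=\bQ_p$, and look at $\End(H^1\dR(A,L_v))$; the slopes are $-1,0,0,1$, and for a non-canonical lift the one-dimensional $\mathrm{Fil}^1$ is not the slope-$1$ line, so a generator has nonzero components in non-positive slopes. Worse, the purely local implication you are aiming for is false: in this same example $s=\varphi_v$ itself (a $\bQ_p$-linear endomorphism since $\sigma=\mathrm{id}$) is $\varphi_v$-invariant in $\End(H^1\dR)$ but does not preserve $\mathrm{Fil}^1 H^1$, hence $s\notin\mathrm{Fil}^0$. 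No argument that treats $s$ as an arbitrary element of $H^1\dR(A,L_v)^{m,n}$ can succeed.

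The paper uses the infinitely many $v$ essentially, via the \emph{converse} of Mazur's inequality (extended by Ogus to tensors): modulo $\fp$ one has the equality
\[
\mathrm{Fil}^j(T\otimes k_v)=\{\xi\in T:\varphi_v(\xi)\in p^jT\}\bmod\fp,
\]
not merely the inclusion you invoke. Then $\varphi_v(s)=s\in T$ forces $s\in\mathrm{Fil}^0\pmod\fp$ for each good $v$; since the Hodge filtration spreads out over $\cO_L[1/N]$ with locally free graded pieces, $s\notin\mathrm{Fil}^0$ over $L$ would give $s\notin\mathrm{Fil}^0\pmod\fp$ for almost all $\fp$, and the infinitude of good $v$ yields the contradiction.
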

\begin{proof}
By \cite{Mazur}*{Thm. 7.6} and the extension of the result to $H^1_{\mathrm{dR}}(A/L)^{m,n}$ in the proof by Ogus\footnote{The dual of $H^1_{\mathrm{cris}} (A_{k_v}/W(k_v))$ has a natural $W(k_v)$-structure, although $\varphi_v$ on the dual does not preserve this integral structure. In order to apply Mazur's argument to the dual, Ogus passes to a suitable Tate twist of the dual such that the new $\varphi_v$ acts integrally.}
, we have that for all but finitely many $v$, the mod $\fp$ filtration $\Fil^j(H^1_{\mathrm{cris}} (A_{k_v}/W(k_v)\otimes k_v)^{m,n})$ is the set $$\{\xi \text{ mod }\fp \mid \xi\in (H^1_{\mathrm{cris}} (A_{k_v}/W(k_v)))^{m,n} \text{ with } \varphi_v(\xi)\in p^j(H^1_{\mathrm{cris}} (A_{k_v}/W(k_v)))^{m,n}\}.$$
Then for the infinitely many $v$ such that $\varphi_v(s)=s$, we have that the reduction $$s \text{ mod }\fp\in \Fil^0((H^1_{\mathrm{cris}} (A_{k_v}/W(k_v))\otimes k_v)^{m,n})$$ and if $s\in \Fil^1H^1_{\mathrm{dR}}(A/L)^{m,n}$, then $s$ is $0$ modulo $\fp$. Since the Hodge filtration over $L$ is compatible with the Hodge filtration over $k_v$, we obtain the desired assertions.
\end{proof}

The main conjecture studied in this paper is the following:
\begin{conj}[Ogus \cite{Ogus}*{Problem 2.4, Hope 4.11.3}]\label{conj_main}
The set of absolute Tate cycles of an abelian variety $A$ defined over $K$ coincides with the set of Hodge cycles via the isomorphism between Betti and de Rham cohomologies.
\end{conj}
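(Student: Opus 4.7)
The plan is to deduce the conjecture by combining the Mumford--Tate conjecture with the crystalline analogue of Faltings' isogeny theorem due to Bost, using Tannakian formalism and Pink's classification of Mumford--Tate pairs. First, I would form the de Rham--Tate group $G\dR \subseteq \GL(H^1\dR(A/K))$ as the pointwise stabilizer of all de Rham--Tate cycles in the tensor algebra. By \Cref{Fil0}, de Rham--Tate cycles lie in $Fil^0$, and by hypothesis they are fixed by every $\varphi_\sigma$; these two facts together force $G\dR$ to be reductive. By the Deligne--Ogus--Blasius theorem all Hodge cycles are absolute Tate, and since they are a fortiori real on Betti cohomology they are de Rham--Tate. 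Thus the set of de Rham--Tate cycles contains the Hodge cycles, giving $G\dR \subseteq G\MT$ via the de Rham--Betti comparison, and the task is to prove the reverse inclusion $G\dR \supseteq G\MT$.

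For the second step, I would use Bost's theorem on the algebraicity of absolute Tate cycles lying in $\End(H^1\dR(A/L))$ to show that the commutants of $G\dR$ and $G\MT$ in the endomorphism algebra agree. Combined with the semisimplicity of the Tannakian category of de Rham--Tate motives generated by $A$, this identifies the isotypic decomposition of $H^1\dR$ under $G\dR$ with that under $G\MT$, and reduces the problem to an isogeny factor by isogeny factor analysis, ultimately to the case where $A$ is absolutely simple.

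For the third step, I would use Frobenius tori as a bridge to the $\ell$-adic side. The theorem of Katz--Messing allows each Frobenius torus $T_v$ to be viewed simultaneously inside $G\dR$ (since the corresponding Frobenius fixes every de Rham--Tate cycle by definition) and inside $G_\ell$. Granting the Mumford--Tate conjecture, $G_\ell^\circ = G\MT\otimes\bQ_\ell$, and by Serre's theorem together with the refinement proved in \cref{MumfordTate}, these Frobenius tori attain maximal rank along a density-one set of places. Feeding this into Pink's classification of (absolutely) irreducible strong Mumford--Tate pairs applied to $G\dR^\circ$ acting on $H^1\dR$, together with the commutant information from step two, should force $G\dR^\circ = G\MT$; matching the semisimple parts of Frobenius elements at enough places then forces the full group equality.

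The main obstacle will be two-fold. First, controlling the component group of $G\dR$ when $K \neq \bQ$: in the $\ell$-adic setting one uses Chebotarev to pass to a finite extension over which $G_\ell$ becomes connected, but since the absolute Frobenii $\varphi_v$ do not factor through the relative Frobenii $\varphi_v^{m_v}$, no analogous argument is presently available on the crystalline side. Second, applying Pink's classification requires genuine absolute irreducibility of $H^1\dR$ under $G\dR^\circ$, which in full generality would rely on a relative-Frobenius strengthening of Bost's theorem along the lines of \Cref{relBost}, itself currently open. These two issues are exactly what restricts the paper's unconditional results to the families in \Cref{thm_main}, and resolving either one in general seems to require a genuinely new crystalline input.
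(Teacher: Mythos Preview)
The statement you are addressing is \Cref{conj_main}, which is an \emph{open conjecture}; the paper does not prove it in general, and so there is no ``paper's own proof'' to compare against. Your proposal is therefore not a proof but a strategy sketch, and you correctly recognize this at the end by naming the two obstacles that block it.

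That said, your outline is an accurate high-level summary of the paper's method for the special cases it does prove. The reductivity of $G\dR$, the inclusion $G\dR\subset G\MT$, the use of Bost's theorem to match centralizers, the Frobenius-torus bridge via Katz--Messing, Serre's maximal-rank theorem and its refinement, and Zarhin's key lemma (\Cref{keylem}) together constitute exactly the engine behind \Cref{Qcase} and \Cref{thm_main}. One small correction: Pink's classification of strong Mumford--Tate pairs is used in the paper mainly to constrain the shape of $G\dR^\circ$ unconditionally (\Cref{MTpair}), not as the step that forces $G\dR^\circ=G\MT$; the latter equality, when it is obtained, comes from the rank-plus-centralizer argument via \Cref{keylem}.

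Your diagnosis of the obstructions is also the paper's own. The component-group issue for $G\dR$ when $K\neq\bQ$ is exactly why the paper introduces relative de Rham--Tate groups and proves the refinement \Cref{TvQ}, and why case (3) of \Cref{thm_main} needs the degree hypothesis $\gcd([K:\bQ],g!)=1$. The need for absolute irreducibility under $G\dR^\circ$ is handled case-by-case in \cref{connected} (via \Cref{decomp}, \Cref{nomoresub}, and \Cref{cm=torus}) rather than by appealing to \Cref{relBost}, which remains open. So your proposal should be read as ``this is the paper's program, and here is why it does not yet close the conjecture,'' which is correct.
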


\begin{rem}\label{knowncase}
To study the conjecture, one may assume that $A$ is principally polarized. The reason is the following: after passing to some finite extension of $K$, the abelian variety $A$ is isogenous to a principally polarized one; moreover, this conjecture is insensitive to base change and the conjectures for two isogenous abelian varieties are equivalent. 
\end{rem}

\begin{theorem}[\cite{D82}*{Theorem 2.11}, \cite{Ogus}*{Theorem 4.14}, \cite{Bl94}]\label{HisdRT}
For any abelian variety,
every Hodge cycle is absolute Tate.
\end{theorem}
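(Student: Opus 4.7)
The plan is to assemble the result from its three attributed components: Deligne's absolute Hodge theorem, Ogus's formalism relating absolute Hodge cycles to Frobenius invariance, and Blasius's compatibility of the Fontaine--Faltings $p$-adic de Rham comparison with Hodge cycles on abelian varieties. By Remark \ref{knowncase}(2) I first reduce to the case where $A$ is principally polarized and defined over a number field $L$. It suffices, by definition of de Rham--Tate cycle, to show that for each Hodge cycle $s\in H^1\dR(A,L)^{m,n}$ and each finite place $v$ of $L$ of good reduction above a rational prime $p$, the crystalline Frobenius $\varphi_v$ acts trivially on $s$.

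Next I would invoke Deligne's theorem (\cite{D82}*{Thm.~2.11}) that every Hodge cycle on an abelian variety is absolutely Hodge. This produces, compatibly with all comparison isomorphisms, a Betti representative $s_{\mathrm B,\sigma}$ for each embedding $\sigma\colon L\hookrightarrow\bC$, a de Rham representative equal to the given $s$, and $\ell$-adic étale representatives $s_{\ell}\in H^1_{\mathrm{\acute et}}(A_{\bar L},\bQ_\ell)^{m,n}$ for every prime $\ell$. A key feature of the absolute Hodge formalism is that each $s_\ell$ is a Tate class in the usual $\ell$-adic sense, hence is fixed by $\mathrm{Gal}(\bar L/L')$ for some finite extension $L'$ of $L$; in particular $s_\ell$ is fixed by the geometric Frobenius $\mathrm{Frob}_v$ for almost all $v$.

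The crucial step is to transfer this $\ell$-adic Galois-invariance into an assertion about the crystalline Frobenius on the de Rham side, for $\ell=p$ and $v\mid p$ a place of good reduction. Here I would apply Blasius's theorem (\cite{Bl94}): the Fontaine--Faltings de Rham comparison isomorphism
\[
B\dR\otimes_{\bQ_p}H^1_{\mathrm{\acute et}}(A_{\bar L},\bQ_p)\;\cong\;B\dR\otimes_{L_v}H^1\dR(A,L_v)
\]
identifies $s_p$ (viewed on the left after extension of scalars) with $s$ (on the right). This is the deep input that bridges the two cohomology theories at $p$, and is the main obstacle in the argument: Deligne's proof is transcendental and gives no $p$-adic information, while Ogus's original result was conditional on exactly such a compatibility. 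Blasius's proof proceeds by Deligne's principle of extending cycles through one-parameter families, starting from the CM case where the Fontaine--Messing comparison is explicit.

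Granting the Blasius compatibility, the conclusion is routine. Under the crystalline comparison $B\cris\otimes H^1_{\mathrm{\acute et}}\cong B\cris\otimes H^1\cris(A_{k_v}/W(k_v))$, the geometric Frobenius on the étale side corresponds to the crystalline Frobenius on the right, and this crystalline Frobenius is precisely $\varphi_v$ after Berthelot--Ogus base change from $H^1\cris\otimes L_v$ to $H^1\dR(A,L_v)$. Since $s_p$ is fixed by $\mathrm{Frob}_v$ for all but finitely many $v$ (enlarging the exceptional set to include places of bad reduction, ramification in $L/\bQ$, and the finitely many places where $s_p$ is not yet $\mathrm{Frob}_v$-invariant), we obtain $\varphi_v(s)=s$ for all but finitely many $v$, which is the definition of $s$ being a de Rham--Tate cycle.
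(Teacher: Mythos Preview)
The paper does not supply its own proof of this theorem; it is stated with citations to Deligne, Ogus, and Blasius and immediately followed by the remark that the open direction of Conjecture~\ref{conj_main} is the converse. Your sketch is therefore not competing with anything in the paper---it is filling in the content behind those citations, and it does so correctly in outline: Deligne gives absoluteness of Hodge cycles, Blasius gives the compatibility of the $p$-adic \'etale and de Rham components under the $p$-adic comparison, and then crystalline Frobenius invariance follows.

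Two small points. First, Definition~\ref{dRTcycle} in this paper requires $\varphi_v(s)=s$ for \emph{all} places $v\notin\Sigma$, where $\Sigma$ is a finite set of \emph{finite} places; in particular every archimedean $\varphi_\sigma$ must fix $s$. You only treat finite places. For a Hodge cycle this archimedean condition is immediate (the cycle lies in rational, hence real, Betti cohomology and is therefore fixed by complex conjugation), but you should say so. Second, your phrasing ``the geometric Frobenius on the \'etale side corresponds to the crystalline Frobenius on the right'' is a little loose: the precise mechanism is that $s_p$ is $G_{L_v}$-invariant, so $1\otimes s_p$ lies in $D\cris$, and the Frobenius on $D\cris$ is induced from $\varphi$ on $B\cris$ acting trivially on the \'etale factor, whence $\varphi(1\otimes s_p)=1\otimes s_p$. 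Your conclusion is correct, but the intermediate sentence as written could mislead. The initial appeal to Remark~\ref{knowncase}(2) is unnecessary here---that reduction concerns the converse conjecture, not this direction---though it does no harm.
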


Therefore, the open part of \Cref{conj_main} is whether all of the absolute Tate cycles are Hodge cycles.

\subsection{The absolute Tate group}\label{sub_GdR}
We fix an isomorphism of $K$-vector spaces $H^1_{\mathrm{dR}}(A/K)$ and $K^{2g}$. Then the algebraic group $\GL_{2g,K}$ acts on $H^1_{\mathrm{dR}}(A/K)$ and hence on $H^1_{\mathrm{dR}}(A/L)^{m,n}$.

We first consider the case when $A$ is defined over $\bQ$. By \Cref{rmk_def}, set of absolute Tate cycles is stable under $\Gal(\bar{\bQ}/\bQ)$-action.
\begin{defn}[for $A$ over $\bQ$]
We define $G_\aT(A)$ to be the algebraic subgroup of $\GL_{2g,\bQ}$ such that for any $\bQ$-algebra $R$, the set of its $R$-points $G_\aT(A)(R)$ is the subgroup of $\GL_{2g}(R)$ which fixes all absolute Tate cycles. We call $G_\aT(A)$ the \emph{absolute Tate group} of the abelian variety $A$ over $\bQ$. We define $G_\aT^\bQ(A)$ to be the algebraic subgroup of $\GL_{2g,\bQ}$ which fixes all absolute Tate cycles over $\bQ$.
\end{defn}

\begin{rem}
Since $G_\aT(A)$ and $G_\aT^\bQ(A)$ are algebraic groups, they are defined as the stabilizers of finitely many absolute Tate cycles. Therefore, for all but finitely many $p$, we have $\varphi_p\in G_\aT^\bQ(\bQ_p)$.
\end{rem}

One could use the same definition to define the absolute Tate group for abelian varieties over number fields; however, it seems difficult to show directly that, for $A$ over a general number field, the stabilizer of all absolute Tate cycles is reductive. Therefore, we use the following definition. \Cref{conj_main} implies that these two definitions coincide. 
\begin{para}
We use $\Res^K_\bQ A$ to denote the abelian variety over $\bQ$ given by the Weil restriction. There is a natural map $A\rightarrow (\Res^K_\bQ A)_K$, which induces $H^1\dR(A/K)\hookrightarrow H^1\dR(\Res^K_\bQ A/K)$ as a direct summand. Since the projection from $H^1\dR(\Res^K_\bQ A/K)$ to $H^1\dR(A/K)$ is an absolute Tate cycle of $\Res^K_\bQ A$, the absolute Tate group $G_\aT(\Res^K_\bQ A)_K$ acts on $H^1\dR(A/K)$. 
\end{para}

\begin{defn}
We define $G_\aT(A)$ to be the image of the map $G_\aT(\Res^K_\bQ A)_K\rightarrow \GL(H^1\dR(A/K))$ and call $G_\aT(A)$ the \emph{absolute Tate group} of the abelian variety $A$ over $K$. When there is no risk of confusion, we will use $G_\aT$ to denote $G_\aT(A)$
\end{defn}

\begin{rem}\label{rmk_aTgroup}
By definition, $G_\aT(A)$ lies in the algebraic subgroup of $\GL(H^1\dR(A/K))$ which fixes all absolute Tate cycles of $A$. For all but finitely many finite places $v$, the injection $H^1\dR(A/K)\rightarrow H^1\dR(\Res^K_\bQ A/K)$ is $\varphi_v$-equivariant. Assume that $v|p$ is unramified in $K/\bQ$ and let $m_v=[K_v:\bQ_p]$. If $\varphi_v^{m_v}\in G_\aT(\Res^K_\bQ A)(K_v)$, then $\varphi_v^{m_v}\in G_\aT(A)(K_v)$. 
\end{rem}

\begin{lemma}\label{KdR}
Let $\{s_\alpha\}$ be a finite set of absolute Tate cycles of $\Res^K_\bQ A$ such that the algebraic group $G_\aT(\Res^K_\bQ A)$ is the stabilizer of all these $s_\alpha$. Let $K^{\aT}$ be the smallest finite extension of $K$ over which all these $s_\alpha$ are defined. Then
\begin{itemize}
\item the field $K^{\aT}$ is the smallest finite extension of $K$ over which all of the absolute Tate cycles of $\Res^K_\bQ A$ are defined;
\item the field $K^{\aT}$ is Galois over $K$.
\end{itemize}
\end{lemma}

\begin{proof}
Let $K^{\aT}$ be the smallest finite extension of $K$ over which all $s_\alpha$ in the finite set are defined. We need to show that if $t\in (H^1_{\mathrm{dR}}(\Res^K_\bQ A/\bar{\bQ}))^{m,n}$ is absolute Tate, then $t$ is defined over $K^{\aT}$. Let $L$ be a number field such that $t$ is defined and we may assume $L$ is Galois over $K^{\aT}$. 
Let $W$ be the sub vector space of $(H^1_{\mathrm{dR}}(\Res^K_\bQ A/L))^{m,n}$ spanned by $\{\gamma t \mid \gamma \in \Gal(L/K^{\aT})\}$. Since $W$ is $\Gal(L/K^{\aT})$ invariant, there exists a sub vector space $W_0$ of $(H^1_{\mathrm{dR}}(\Res^K_\bQ A/K^{\aT}))^{m,n}$ such that $W=W_0\otimes_{K^{\aT}} L$. By \Cref{rmk_def}, these $\gamma t$ are absolute Tate, and hence $G_\aT(\Res^K_\bQ A)(L)$ acts on $W$ trivially. In particular, $G_\aT(\Res^K_\bQ A)(K^{\aT})$ acts on $W_0$ trivially. On the other hand, since $\{s_\alpha\}\cup \{t\}$ is a finite set, for all but finitely many finite places $v$ of $L$, we have $\varphi_v(s_\alpha)=s_\alpha$ and $\varphi_v(t)=t$. Let $p$ be the residue characteristic of $v$ and let $m_v$ be $[K^{\aT}_v:\bQ_p]$. We use $\sigma_v$ to denote the Frobenius in $\Gal(K^{\rm{aT, nr}}_v/K^{\aT}_v)$. The $K_v^{\aT}$-linear action $\varphi_v^{m_v}$ lies in $G_\aT(\Res^K_\bQ A)(K^{\aT}_v)$ since it fixes all $s_\alpha$ and hence acts on $W_0\otimes K^{\aT}_v$ trivially. Let $\{w_i\}$ be a basis of $W_0$ and write $t=\sum_i a_iw_i$, where $a_i\in L$. Then $\varphi_v^{m_v}(t)=\sum\sigma_v(a_i)w_i$.
Since $\varphi_v^{m_v}(t)=t$, then $\sigma_v(a_i)=a_i$. Therefore, by the Chebotarev density theorem, $a_i\in K^{\aT}$ and $t\in W_0$. The last assertion of the lemma comes from \Cref{rmk_def}.
\end{proof}

\begin{lemma}\label{KtoQ}
The group $G_\aT(\Res^K_\bQ A)$ is a finite index subgroup of $G^\bQ_\aT(\Res^K_\bQ A)$.
\end{lemma}

\begin{proof}
Let $\{s_\alpha\}$ be a finite set of absolute Tate cycles of $\Res^K_\bQ A$ such that $G_\aT(\Res^K_\bQ A)$ is the stabilizer of all $s_\alpha$. Let $\{s_\alpha^1,\dots, s_\alpha^r\}\subset (H^1\dR(\Res^K_\bQ A/L))^{m,n}$ be the Galois orbit of $s_\alpha$ under the action of $\Gal(\bar{\bQ}/\bQ)$ and let $t^i_\alpha$ ($i\in\{1,\dots,r\}$) be the $i$-th symmetric power of $\{s_\alpha^1,\dots, s_\alpha^r\}$, that is, $t^1_\alpha=\sum_{i=1}^r s^i_\alpha\in (H^1\dR(\Res^K_\bQ A/L))^{m,n}$, $t^2_\alpha=\sum_{i\neq j}s^i_\alpha \otimes s^j_\alpha \in (H^1\dR(\Res^K_\bQ A/L))^{2m,2n}$,\dots. By \Cref{rmk_def}, these $t^i_\alpha$ are absolute Tate cycles over $\bQ$ and hence fixed by $G^\bQ_\aT(\Res^K_\bQ A)$. We consider the quotient (as a set, not with group structure) $G^\bQ_\aT(\Res^K_\bQ A)/G_\aT(\Res^K_\bQ A)$. 
The following claim is an elementary linear algebra result.
\begin{claim}
If $g\in \GL(H^1\dR(\Res^K_\bQ A/L))$ fixes $t^i_\alpha$ for all $i\in \{1,\dots, r\}$, then $g$ permutes the set $\{s_\alpha^1,\dots, s_\alpha^r\}$.
\end{claim}
\begin{proof}[Proof of Claim]
If not, assume that $g$ maps $\{s_\alpha^1,\dots, s_\alpha^r\}\subset (H^1\dR(\Res^K_\bQ A/L))^{m,n}$ to a different set of vectors $\{v^1,\dots, v^r\}$. We can always choose a projection of the $L$-vector space $(H^1\dR(\Res^K_\bQ A/L))^{m,n}$ to a line $W$ such that the projection of these two sets $\{\bar{s}_\alpha^1,\dots, \bar{s}_\alpha^r\}$ and  $\{\bar{v}^1,\dots, \bar{v}^r\}$ are different. Since $g$ fixes $t^i_\alpha$, it fixes the projection of $t^i_\alpha$ to $W^{\otimes i}$, which is the $i$-th symmetric power of $\{\bar{s}_\alpha^1,\dots, \bar{s}_\alpha^r\}$. In other words, $\{\bar{s}_\alpha^1,\dots, \bar{s}_\alpha^r\}$ and  $\{\bar{v}^1,\dots, \bar{v}^r\}$ have the same symmetric powers. Fix a basis vector $w$ of $W$ and write $\bar{s}^i_\alpha=a_iw$ and $\bar{v}^i=b_iw$ and let $f,g\in L[x]$ be the polynomial with roots $\{a_1,\dots, a_r\}$ and $\{b_1,\dots, b_r\}$. By assumption, $\{a_1,\dots, a_r\}$ and $\{b_1,\dots, b_r\}$ have the same symmetric powers, so $f=g$. Therefore $\{a_1,\dots, a_r\}=\{b_1,\dots, b_r\}$ and this contradicted with the assumption that $g$ does not permute the set $\{s_\alpha^1,\dots, s_\alpha^r\}$.
\end{proof}
By the above claim, every element in $G^\bQ_\aT(\Res^K_\bQ A)/G_\aT(\Res^K_\bQ A)$ preserves the Galois orbit of each $s_\alpha$ and hence this quotient set is finite.
\end{proof}

\begin{para}\label{GMT}\label{MTred}
Before we reformulate \Cref{conj_main} in terms of algebraic groups following the idea of Deligne, 
we recall the definition and basic properties of the Mumford--Tate group $G\MT$. See \cite{D82}*{Sec. 3} for details. When we discuss $G\MT$ and Hodge cycles, we always fix an embedding $\sigma:K\rightarrow \bC$. We denote $H^1_{\mathrm{B}}(A_\sigma(\bC),\bQ)$ by $V_B$. The vector space $V_B$ has a natural polarized Hodge structure of type $((1,0),(0,1))$. Let $\mu:\bG_{m,\bC}\rightarrow \GL(V_{B,\bC})$ be the \emph{Hodge cocharacter}, through which $z\in \bC^{\times}$ acts by multiplication with $z$ on $V_{B,\bC}^{1,0}$ and trivially on $V_{B,\bC}^{0,1}$.  The \emph{Mumford--Tate group} $G\MT$ of the abelian variety $A$ is the smallest algebraic subgroup defined over $\bQ$ of $\GL(V_B)$ such that its base change to $\bC$ containing the image of $\mu$. The Mumford--Tate group is the largest algebraic subgroup of $\GL(V_B)$ which fixes all Hodge cycles.\footnote{Here we consider Hodge cycles as elements in $V_B^{m,m-2i}(i)\subset V_B^{m',n'}$ for some choice of $m',n'$ as Tate twists is a direct summand of the tensor algebra of $V_B$.} Since all Hodge cycles are absolute Hodge cycles in the abelian variety case (\cite{D82}*{Thm. 2.11}), the algebraic group $G\MT$ is independent of the choice of $\sigma$. The Mumford--Tate group $G\MT$ is reductive (\cite{D82}*{Prop. 3.6}) and the fixed part of $G\MT$ in $V_B^{m,n}$ is the set of Hodge cycles. 
\end{para}

\begin{corollary}\label{dRTinMT}
Via the de Rham--Betti comparison, we have $G_{\aT,\bC}\subset G_{\rm{MT},\bC}$.
\end{corollary}
\begin{proof}
It follows from \Cref{HisdRT} and \Cref{rmk_aTgroup}.
\end{proof}

\Cref{conj_main} is essentially equivalent to the following conjecture, which we will mainly focus on from now on.

\begin{conj}\label{conj_gp}
Via the de Rham--Betti comparison, we have $G_{\aT,\bC}= G_{\rm{MT},\bC}$.
\end{conj}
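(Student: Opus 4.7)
The plan has two parts. First, I would verify the equivalence with \Cref{conj_main} asserted in \ref{MTred}. Since $G\MT$ is characterized as the $\bQ$-subgroup of $\GL(V_B)$ whose fixed tensors are exactly the Hodge cycles, while $G\dR$ is by definition the subgroup of $\GL(H^1\dR(A/K)_{\bar K})$ whose fixed tensors are exactly the de Rham--Tate cycles, the containment $G_{\mathrm{dR},\bC}\subset G_{\mathrm{MT},\bC}$ from \Cref{dRTinMT}, together with Chevalley's theorem (each algebraic subgroup is the stabilizer of a line in some tensor construction), reduces the equality of groups to the equality of their fixed-tensor sets. The forward implication is thus automatic; for the reverse, if every dRT cycle is a Hodge cycle then the defining conditions of $G\MT$ are a subset of those of $G\dR$, giving $G_{\mathrm{MT},\bC}\subset G_{\mathrm{dR},\bC}$.

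Second, to actually prove the conjecture, I would follow the architecture laid out in the introduction. Apply Bost's isogeny-style theorem to endomorphism-valued cycles to deduce that the centralizer of $G\dR$ in $\End(H^1\dR(A/K))$ coincides with that of $G\MT$; in particular $G\dR$ is reductive and $H^1\dR$ is $G\dR$-irreducible up to the expected isotypic structure. In cases where the Mumford--Tate conjecture is known, identify $G_\ell^\circ$ with $G_{\mathrm{MT},\bQ_\ell}$; via Katz--Messing the Frobenius tori $T_v$ embed simultaneously into $G_\ell$ and $G\dR$, so a suitable refinement of Serre's theorem on ranks of Frobenius tori populates $G\dR^\circ$ with many maximal tori. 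Combined with Pink's classification of strong Mumford--Tate pairs and the equality of centralizers, this should pin down $G\dR^\circ = G\MT$, after which the centralizer information forces $G\dR = G\dR^\circ$ and hence the conjecture.

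The hard part will be the \emph{connectedness} of $G\dR$. Over $\bQ$ the absolute and relative Frobenii coincide, so connectedness of $G_\ell$ transfers to almost-connectedness of $G\dR$ through a natural-density argument combined with the strengthening of Bost's theorem. Over a general number field, $G_\ell$ only controls the relative Frobenii $\varphi_v^{m_v}$, and there is no Chebotarev-style bridge to the absolute Frobenii; this gap is exactly what \Cref{relBost} would close. In its absence one must substitute case-specific inputs—Noot's rigidity theorem at ordinary points, or arithmetic restrictions on $K$ and $\dim A$ combined with a sharpened Frobenius-torus analysis—to force connectedness indirectly, which is precisely why \Cref{thm_main} carries hypotheses rather than being unconditional. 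Proving \Cref{relBost} in general appears to be the decisive missing step toward the full conjecture.
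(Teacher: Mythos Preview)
The statement is a \emph{conjecture}; the paper does not prove it. What appears after it is only a short \emph{proof of equivalence} with \Cref{conj_main}. Your part~2, which sketches a strategy to establish the conjecture outright, is therefore not something to compare against the paper---and you yourself note that the argument stalls at connectedness of $G\dR$ absent extra hypotheses. So the only thing to assess is your part~1.

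There your handling of the equivalence is one-sided. The direction ``\Cref{conj_main} $\Rightarrow$ $G_{\mathrm{dR},\bC}=G_{\mathrm{MT},\bC}$'' is fine and matches the paper's one-line dismissal of it. But the sentence you label ``for the reverse'' is still arguing that same direction (you again assume every dRT cycle is Hodge and deduce $G_{\mathrm{MT},\bC}\subset G_{\mathrm{dR},\bC}$). The genuine converse---from $G_{\mathrm{dR},\bC}=G_{\mathrm{MT},\bC}$ back to \Cref{conj_main}---is missing from your proposal, and it is the only part with content. Equality of the groups, together with reductivity of $G\MT$ (\ref{MTred}), gives only that every de Rham--Tate cycle lies in the $\bC$-span of Hodge cycles. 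Passing from ``$\bC$-linear combination of Hodge cycles'' to ``is a Hodge cycle'' is a descent step that Chevalley's theorem does not supply: Chevalley characterizes subgroups via stabilizers of lines over an algebraically closed field, not via sets of rational tensors. The paper closes this gap by invoking \Cref{HisdRT} together with \cite{Ogus}*{Prop.~4.9}; your outline never touches this point.
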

\begin{proof}[Proof of equivalence] \Cref{conj_main} for $\Res^K_\bQ A$ implies \Cref{conj_gp} for $A$ by definition. Conversely, \Cref{conj_gp} for $A$ implies \Cref{conj_main} for $A$. Indeed, \Cref{conj_gp} for $A$ implies that every $\bC$-linear combination of absolute Tate cycles of $A$ is fixed by $G_{{\rm{MT}},\bC}$. Then by the discussion in \ref{MTred}, every $\bC$-linear combination of absolute Tate cycles maps to a $\bC$-linear combination of Hodge cycles via the de Rham--Betti comparison. We conclude by \Cref{HisdRT} and Prop.~4.9 in \cite{Ogus}, which shows that all absolute Tate cycles are $\bC$-linearly independent. 
\end{proof}

\begin{rem}
This conjecture implies that $G_{\aT}$ is connected and reductive. We will show that $G_\aT$ is reductive in \cref{sub_cent}. However, there seems to be no direct way to show that $G_\aT$ is connected without proving the above conjecture first.
\end{rem}

\subsection{Cycles in $\End(H^1\dR(A/K))$ and the reductivity of $G_\aT$}\label{sub_cent}

The following definition is motivated by \cite{H}*{Def. 3.5}.
\begin{defn}
An element $s$ of $(H^1_{\mathrm{dR}}(A/L))^{m,n}(i)$ is called a 
\emph{$\beta$-absolute-Tate cycle} if 
$$\beta\leq \liminf_{x\rightarrow \infty}\left(\sum_{v, p_v\leq x, \varphi_v(s)=s}\frac{[L_v:\bQ_{p_v}]\log p_v}{p_v-1}\right)\left([L:\bQ]\sum_{p\leq x}\frac{\log p}{p-1}\right)^{-1},$$
where $v$ (resp. $p$) runs over finite places of $L$ (resp. $\bQ$) and $p_v$ is the residue characteristic of $v$. 
\end{defn}

\begin{rem}
\leavevmode
\begin{enumerate}
\item
Absolute Tate cycles are $1$-absolute-Tate cycles by definition.
\item
Let $M$ be a set of rational primes with natural density $\beta$ and assume that for all $p\in M$ and for any $v|p$, one has $\varphi_v(s)=s$. Then $s$ is a $\beta$-absolute-Tate cycle by \cite{H}*{Lem. 3.7}.
\end{enumerate}
\end{rem}

\begin{theorem}[Bost, Gasbarri, Herblot]\label{Bost}
The set of $1$-absolute-Tate cycles in $\End(H^1_{\mathrm{dR}}(A/L))$ is the image of $\End_L(A)\otimes \bQ$. 
\end{theorem}
\begin{proof}
The statement restricted to absolute Tate cycles is a direct consequence of \cite{B01}*{Thm. 2.3} and we refer the reader to \cite{A04b}*{7.4.3} for a proof. See also \cite{B06}*{Thm. 6.4}. Notice that their argument is valid for $1$-absolute-Tate cycles if one applies the following proposition instead, which is a generalization of \cite{B01}*{Thm. 2.3}.
\end{proof}
\begin{proposition}\label{prop_density1}
Let $G$ be a commutative algebraic group over $L$ and let $W$ be an $L$-sub vector space of $\Lie G$. Assume that there exists a set $M$ of finite places of $L$ such that:
\begin{enumerate}
\item for any $v\in M$ over rational prime $p$, $W$ modulo $v$ is closed under $p$-th power map,
\item $\displaystyle \liminf_{x\rightarrow \infty}\left(\sum_{v, p_v\leq x, v\in M}\frac{[L_v:\bQ_{p_v}]\log p_v}{p_v-1}\right)\left([L:\bQ]\sum_{p\leq x}\frac{\log p}{p-1}\right)^{-1}=1.$
\end{enumerate} 
Then $W$ is the Lie algebra of some algebraic subgroup of $G$.
\end{proposition}
This proposition follows directly from \Cref{Gas}, which is a refinement of a theorem of Gasbarri by incorporating ideas of Herblot. We will prove this proposition in \cref{density} after \Cref{Gas}.

We now use \cite{B01}*{Thm.~2.3} to prove the reductivity of $G_\aT$. We start with $A$ over $\bQ$.
\begin{theorem}\label{red_aT_Q}
The absolute Tate group $G_\aT$ of an abelian variety $A$ over $\bQ$ is reductive.
\end{theorem}

\begin{proof}
The algebraic $\bQ$-group $G^\bQ_\aT$ admits a faithful representation $H^1\dR(A/\bQ)$. By the theory of reductive groups, $G_\aT^\bQ$ is reductive if this representation is completely reducible over $\bQ$.\footnote{To see this, let $R_u$ be the unipotent radical of $G_\aT^\bQ$. It is a normal unipotent subgroup of $G_\aT^\bQ$ over $\bQ$. Decompose $H^1\dR(A/\bQ)=\oplus V_i$, where $V_i$ are $\bQ$-irreducible representations of $G_\aT^\bQ$. Then the $R_u$-invariant part $V_i^{R_u}$ is a $G_\aT^\bQ$-representation over $\bQ$. By Lie--Kolchin theorem, $V_i^{R_u}$ is nonzero and hence $R_u$ acts trivially on $V_i$ and $H^1\dR(A/\bQ)$. This implies that $R_u=1$ and we conclude that $G_\aT^\bQ$ is reductive.} Since the isogeny category of abelian varieties is semisimple, we may assume that $A$ is simple over $\bQ$. Let $V\subset H^1\dR(A/\bQ)$ be a subrepresentation of $G^\bQ_\aT$. We want to show that $V$ is a direct summand. Indeed, we will show that $V=\{0\}$ or $H^1\dR(A/\bQ)$ when $A$ does not have complex multiplication (notice that $A$ is assumed to be simple over $\bQ$).\footnote{Even when $A$ has complex multiplication, one can still show that $V$ is trivial. However, since \Cref{conj_main} is known for $A$ with complex multiplication, we will not make extra effort to do so.} By duality, we may assume that $\dim V\geq \dim A$; otherwise, consider $(H^1\dR(A/\bQ)/V)^\vee(1) \subset (H^1\dR(A/\bQ)^\vee(1)\cong H^1\dR(A/\bQ)$. 

Since for all but finitely many primes $p$, the Frobenius $\varphi_p\in G^\bQ_\aT(\bQ_p)$, and then $\varphi_p$ acts on $V\otimes_{\bQ}\bQ_p$. Let $E(\dA)$ denote the universal vector extension of the dual abelian variety $\dA$. For any $p$ such that $A$ has good reduction at $p$, we have $\Lie E(\dA_{\bZ_p})=H^1\dR(A/\bZ_p)$ and by \cite{Mum}*{p.~138}, the action of $\varphi_p$ on $\Lie E(\dA_{\bF_p})$ is the $p$-th power map on derivations. Therefore, for all but finitely many $p$, the reduction of $V$ modulo $p$ is closed under $p$-th power map on derivations. By \cite{B01}*{Thm. 2.3}, $V$ is the Lie algebra of some $\bQ$-algebraic subgroup $H$ of $E(\dA)$. Since $A$ is simple over $\bQ$, the image of $H$ under the projection $E(\dA)\rightarrow \dA$ is either $\dA$ or $\{0\}$. 

We first consider the case when the image is $\dA$. Let $W$ be the vector group $(H\cap (\Lie A)^\vee)^\circ\subset E(\dA)$. We have the exact sequence
$$0\rightarrow (\Lie A)^\vee/W\rightarrow E(\dA)/W\rightarrow A\rightarrow 0.$$
The inclusion $H\subset E(\dA)$ induces $H/W\rightarrow E(\dA)/W$ and since $H/W$ is isogenous to $A$, this provides a splitting of the above exact sequence. Therefore, we have a surjection $E(\dA)/W\rightarrow (\Lie A)^\vee/W$. Since $E(\dA)$ is anti-affine (this follows from the universal property of $E(\dA)$), $(\Lie A)^\vee/W=\{0\}$ and hence $H=E(\dA)$. In other words, $V=H^1\dR(A/\bQ)$.

Now we deal with the case when $V=\Fil^1(H^1\dR(A/\bQ))$. In other words, for all but finitely many $p$, the crystalline Frobenius $\varphi_p$ preserves the filtration on $H^1\dR(A/\bQ_p)$. For such $p$, by Serre--Tate theory and Grothendieck--Messing theory, the Frobenius endomorphism of $A_{\bF_p}$ lifts to an endomorphism of $A_{\bZ_p}$. On the other hand, $\varphi_v(\Fil^1(H^1\dR(A/\bZ_p)))\subset p H^1\dR(A/\bZ_p)$ by Mazur's weak admissibility theorem. Therefore, the eigenvalues of $\varphi_p$ acting on $\Fil^1(H^1\dR(A/\bQ_p))$ are divisible by $p$. This implies that $A$ has ordinary reduction at $p$.
By \cite{Mes}*{Appendix Cor.~1.2}, $A$ is the canonical lifting of $A_{\bF_p}$ and hence has complex multiplication. In this case, \Cref{conj_main} is known and thus $G_\aT=G\MT$ is a torus. 

In summary, we have shown that $G^\bQ_\aT$ is reductive. By \Cref{KtoQ}, $G_\aT^\circ=(G_\aT^\bQ)^\circ$ and hence $G_\aT$ is also reductive. 
\end{proof}

\begin{corollary}\label{red_aT}
For any abelian variety $A$ over $K$, its absolute Tate group $G_\aT$ is reductive.
\end{corollary}
\begin{proof}
By definition, $G_\aT(A)$ is a quotient group of $G_\aT(\Res^K_\bQ A)$. By \Cref{red_aT_Q}, the group $G_\aT(\Res^K_\bQ A)$ is reductive and hence $G_\aT(A)$ is reductive.
\end{proof}

\section{Frobenius Tori and the Mumford--Tate conjecture}\label{MumfordTate}
In this section, we prove \Cref{thm_Q} and \Cref{thm_QMT}. 
One main input is the theory of Frobenius tori, which we recall in \cref{FrobT}. 
The fact that the Frobenius actions on the crystalline and \'etale cohomology groups have the same characteristic polynomial (\cite{KM}) enables us to view the Frobenius tori as subgroups of both $G_\aT$ and the $\ell$-adic algebraic monodromy group $G_\ell$. Hence the Frobenius tori serve as bridges between results for $G_\ell$ and those for $G_\aT$. In \cref{sub_MT}, we recall the Mumford--Tate conjecture and then use \Cref{Bost}, a key lemma of Zarhin and Serre's theorem to prove \Cref{thm_Q}. The extra inputs of the proof of \Cref{thm_QMT} are the weakly admissibility of certain filtered $\varphi$-modules of geometric origin and the Riemann hypothesis part of the Weil conjectures.
At the end of \cref{FrobT}, we prove a refinement (\Cref{TvQ}) of results of Serre and Chi on the rank of Frobenius tori. 
In \cref{sub_Noot}, we recall a result of Noot and use it to show that if $G^\circ_\aT$ of $A$ is a torus, then $A$ has complex multiplication.

From now on, we use $\Sigma$ to denote a finite set of finite places of $K^\aT$ containing all ramified places such that for $v\notin \Sigma$, the abelian variety $A_{K^{\aT}}$ has good reduction at $v$ and the Frobenius $\varphi_v$ stabilizes all of the absolute Tate cycles of $\Res^K_\bQ A$. For any finite extension $L$ of $K$ in question, we still use $\Sigma$ to denote the finite set of finite places $f^{-1}g(\Sigma)$, where $f:\Spec \cO_L\rightarrow \Spec \cO_K$ and $g:\Spec \cO_{K^{\aT}}\rightarrow \Spec \cO_K$. 

\subsection{Frobenius Tori}\label{FrobT}
The following definition is due to Serre. See also \cite{Chi}*{Sec. 3} and \cite{Pink}*{Sec. 3} for details.

\begin{defn} 
Assume that $L$ contains $K^\aT$. Let $T_v$ be the Zariski closure of the subgroup of $G_{\aT,L_v}$ generated by the $L_v$-linear map $\varphi_v^{m_v}\in G_\aT(L_v)$. Since $\varphi_v^{m_v}$ is semisimple (see for example \cite{Ogus}*{Lem.~2.10}), the group $T^\circ_v$ is a torus and is called the \emph{Frobenius torus} associated to $v$.
\end{defn}

\begin{rem}
The torus $T^\circ_v$ and its rank are independent of the choice of $L$. Moreover, $T_v^\circ$ is defined over $K_v$. 
\end{rem}

\begin{para}
For every prime $\ell$, we have the $\ell$-adic Galois representation 
$$\rho_\ell:\Gal(\bar{K}/K)\rightarrow \GL(H^1_{\et}(A_{\bar{K}},\bQ_\ell)),$$
and we denote by $G_\ell(A)$ the algebraic group over $\bQ_\ell$ which is the Zariski closure of the image of $\Gal(\bar{K}/K)$ and call $G_\ell(A)$ the \emph{$\ell$-adic algebraic monodromy group} of $A$. If it is clear which variety is concerned, we may just use $G_\ell$ to denote this group. Serre proved that there exists a smallest finite Galois extension $K^{\et}$ of $K$ such that for any $\ell$, the Zariski closure of the image of $\Gal(\overline{K^\et}/K^\et)$ is connected (\cite{Serre}*{Sec. 5, p. 15}).
\end{para}

\begin{rem}
For $v\nmid l$, we also view $T_v$ as an algebraic subgroup (only well-defined up to conjugation) of $G_\ell$ in the following sense. Since $A$ has good reduction at $v$, the action of the decomposition group at $v$ is unramified on $H^1_{\et}(A_{\bar{K}},\bQ_\ell)$. Since $v$ is unramified, we have an embedding $\Gal(\bar{k}_v/k_v)\cong \Gal(L^{nr}_v/L_v)\rightarrow \rho_\ell(\Gal(\bar{K}/K))$ after choosing an embedding $\bar{K}\rightarrow \bar{L}_v$. Hence we view the Frobenius $Frob_v$ as an element of $G_\ell$. By a result of Katz and Messing \cite{KM}, the characteristic polynomial of $\varphi_v^{m_v}$ acting on $H^1_{\mathrm{cris}}(A_{k_v}/W(k_v))$ is the same\footnote{To compare the two polynomials, we notice that both of them have $\bZ$-coefficients.} as the characteristic polynomial of $Frob_v$ acting on $H^1_\et(A_{\bar{K}},\bQ_\ell)$. Hence $T_v$ is isomorphic to the algebraic group generated by the semi-simple element $Frob_v$ in $G_\ell$. From now on, when we view $T_v$ as a subgroup of $G_\ell$, we identify $T_v$ with the group generated by $Frob_v$.
\end{rem}

Here are some important properties of Frobenius tori.

\begin{theorem}[Serre, see also \cite{Chi}*{Cor. 3.8}]\label{Tvmax}
There is a set $M_{max}$ of finite places of $K^\et$ of natural density one and disjoint from $\Sigma$ such that for any $v\in M_{max}$,  the algebraic group $T_v$ is connected and it is a maximal torus of $G_\ell$.
\end{theorem}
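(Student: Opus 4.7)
The plan is to follow Serre's original strategy, as recorded in Chi's paper cited in the text, now adapted so that the Frobenius torus is simultaneously viewed inside $G_\ell$ (via Katz--Messing) and inside $G_{\mathrm{dR}}^L$. First, I would replace $K$ by $K^{\et}$ so that $G_\ell = G_\ell^\circ$ is connected reductive for every $\ell$; by the construction of $K^{\et}$, the image of $\Gal(\overline{K^{\et}}/K^{\et})$ in $G_\ell(\bQ_\ell)$ is Zariski dense in $G_\ell$ (and in fact open, by Bogomolov). Fix such an $\ell$, let $r = \mathrm{rank}(G_\ell)$, and for each $v \notin \Sigma$ denote the eigenvalues of $\mathrm{Frob}_v$ on $H^1_{\et}(A_{\bar K},\bQ_\ell)$ by $\alpha_1^{(v)},\dots,\alpha_{2g}^{(v)}$; these are algebraic numbers independent of $\ell$. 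Then $T_v$ is the diagonalisable subgroup of $G_\ell$ with character lattice $\bZ^{2g}/\Lambda_v$, where $\Lambda_v = \{n\in\bZ^{2g} : \prod_i (\alpha_i^{(v)})^{n_i} = 1\}$, and $T_v$ is connected of rank $r$ iff $\Lambda_v$ is saturated of corank $r$.

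Next I would establish the rank statement, i.e.\ that $\dim T_v^\circ = r$ on a set of density one. The inequality $\dim T_v^\circ \leq r$ is automatic. For the opposite inequality, the locus $Z \subset G_\ell$ of semisimple elements $g$ whose eigenvalues in the tautological representation satisfy an extra multiplicative relation beyond those that hold identically on $G_\ell$ is a countable union of proper closed subvarieties of $G_\ell$, each cut out by an equation of the form $\chi(g) = 1$ for some character $\chi$ of a maximal torus that is not identically $1$ on $G_\ell$. Because $G_\ell$ is connected, each such subvariety has strictly smaller dimension, hence Haar measure zero in a maximal compact of $G_\ell(\bQ_\ell)$. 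Combining the Chebotarev density theorem with equidistribution (the image of Galois is open in $G_\ell(\bQ_\ell)$), the set of $v$ with $\mathrm{Frob}_v \in Z$ has natural density $0$, so the complementary set $M'$ of density one satisfies $\dim T_v^\circ = r$ for $v\in M'$.

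The remaining point is to upgrade density one of $M'$ to connectedness of $T_v$, namely to rule out torsion in $\bZ^{2g}/\Lambda_v$. Here I would use that a torsion relation among the $\alpha_i^{(v)}$ of order $N$ forces an $N$th root of unity to lie in the field generated by the $\alpha_i^{(v)}$, and then observe that the locus in $G_\ell$ cut out by any fixed torsion relation $\prod g_i^{n_i} = \zeta$ (with $\zeta$ a fixed nontrivial root of unity in $\overline{\bQ_\ell}$, and $g_i$ the eigenvalues of the tautological representation) is again a proper closed subvariety of $G_\ell$ defined over a finite extension; over $K^{\et}$, the characteristic polynomial of $\mathrm{Frob}_v$ has coefficients in $\bZ$, so only finitely many $\zeta$ (namely the roots of unity in the Weyl-group orbit of weight-zero characters of $G_\ell$) can occur, and each contributes density zero by Chebotarev/equidistribution as above. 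Intersecting with $M'$ gives the desired $M_{\max}$.

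The main obstacle is the connectedness part: one must argue that only finitely many torsion-twist loci are relevant, or at least that their union has density zero, rather than carelessly unioning countably many density-zero sets. Passing to $K^{\et}$ is exactly what makes this go through cleanly, because over $K^{\et}$ the Frobenii land in $G_\ell^\circ$ and the torsion cosets that could house them are controlled by the finite component group of the centralizer of a maximal torus, so the Chebotarev/equidistribution argument applies uniformly.
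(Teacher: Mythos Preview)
The paper does not give its own proof of this result; it is attributed to Serre, with a reference to Chi. The idea is summarized later, inside the proof of \Cref{TvQsp}: pass to $K^\et$ so that $G_\ell$ is connected, exhibit a conjugation-invariant proper Zariski closed $Z\subset G_\ell$ such that every semisimple element outside $Z$ generates a subgroup of maximal rank, and apply the $\ell$-adic Chebotarev density theorem to the measure-zero set $Z(\bQ_\ell)$. Your proposal follows the same architecture.

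The main gap in your write-up is the connectedness step. Your proposed mechanism---``the finite component group of the centralizer of a maximal torus''---does nothing: in a connected reductive group that centralizer \emph{is} the maximal torus, so the component group is trivial and your argument collapses. The point you are missing is much simpler. Once $T_v$ has maximal rank, its identity component $T_v^\circ$ is already a maximal torus $T$ of the connected group $G_\ell$; since $T_v$ is commutative it lies in $Z_{G_\ell}(T)=T$, and hence $T_v=T$. So connectedness is an automatic consequence of maximal rank once one has passed to $K^\et$, and there is no separate torsion-relation analysis to perform. Your last two paragraphs are therefore unnecessary, and the argument sketched there is incorrect.

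On the rank side your description is right in spirit, and you correctly flag that natural density is not countably additive, so ``each hypersurface $\{\chi=1\}$ contributes density zero'' does not by itself give density zero for the union. The paper's sketch sidesteps this by invoking a single proper closed $Z$ (pointing to Chi's Theorem~3.7); that is the input you should look up and cite rather than try to rebuild the countable-union estimate by hand.
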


\begin{proposition}[\cite{Chi}*{Prop. 3.6 (b)}]\label{Tvconn_C}
For $L$ large enough (for instance, containing all the $n$-torsion points for some $n\geq 3$), all but finitely many $T_v$ are connected.
\end{proposition}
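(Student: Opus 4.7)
The plan is to identify disconnectedness of $T_v$ with torsion in an auxiliary multiplicative group, and then use the level structure hypothesis to eliminate such torsion. I work on the $\ell$-adic side, where $T_v\subset G_\ell$ is the Zariski closure of $\langle Frob_v\rangle$. For each place $v$ of good reduction and each prime $\ell$ coprime to the residue characteristic of $v$, the Frobenius $Frob_v$ is semisimple with eigenvalues $\alpha_1,\dots,\alpha_{2g}$; these are algebraic integers in $\bar{\bQ}$, independent of $\ell$ by Katz--Messing. Hence $T_v$ is diagonalizable over $\bar{\bQ}_\ell$, and its character group is canonically identified with the subgroup $\Lambda\subset\bar{\bQ}^{\times}$ generated by the $\alpha_i$. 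In particular $T_v$ is connected if and only if $\Lambda$ is torsion-free, so the task reduces to showing that $\Lambda$ contains no nontrivial root of unity.

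To exploit the hypothesis, I would choose a prime $\ell\mid n$ coprime to the residue characteristic of $v$; such an $\ell$ exists for every $v$ of good reduction outside the finite set of places whose residue characteristic divides $n$. Since $L\supset K(A[n])$, the element $Frob_v$ acts trivially on $A[n]$, so $\rho_\ell(Frob_v)-1\in n\cdot \End(T_\ell A)$. A Newton polygon argument applied to the characteristic polynomial of $\rho_\ell(Frob_v)-1$ then yields $v_{\mathfrak{l}}(\alpha_i-1)\geq v_\ell(n)$ for every prime $\mathfrak{l}$ of $\bar{\bZ}$ above $\ell$. This congruence is inherited by every element of $\Lambda$, so any torsion element $\zeta\in\Lambda$ satisfies $\zeta\equiv 1\pmod{\mathfrak{l}^{v_\ell(n)}}$.

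A Minkowski-style computation then shows that such a $\zeta$ must be $1$ whenever $n\geq 3$. Decomposing $\zeta=\zeta_{\ell^a}\cdot \zeta_{m'}$ with $(m',\ell)=1$, the prime-to-$\ell$ factor $\zeta_{m'}$ injects into the residue field of $\mathfrak{l}$ and hence already equals $1$; the $\ell$-power factor contributes $v_{\mathfrak{l}}(\zeta_{\ell^a}-1)=1/(\ell^{a-1}(\ell-1))$, which is strictly smaller than $v_\ell(n)\geq 1$ whenever $\ell$ is odd, or $\ell=2$ with $v_2(n)\geq 2$. At least one such $\ell\mid n$ exists once $n\geq 3$, forcing $\zeta=1$ and hence $\Lambda$ torsion-free.

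The step I expect to be the main obstacle is the Newton polygon input passing from the matrix congruence $\rho_\ell(Frob_v)\equiv 1\pmod n$ to the eigenvalue congruence $\alpha_i\equiv 1\pmod{\mathfrak{l}^{v_\ell(n)}}$ in the ramified extension $\bar{\bQ}_\ell/\bQ_\ell$; the remaining root-of-unity analysis is elementary number theory. The borderline case $n=2$, where $\zeta=-1$ exactly saturates the valuation bound, explains why the hypothesis $n\geq 3$ is necessary for this argument.
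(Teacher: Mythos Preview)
The paper does not give its own proof of this proposition; it is simply cited from Chi. So there is no ``paper's proof'' to compare against, and I evaluate your argument on its own.

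Your argument is correct and is essentially the standard one going back to Serre and Chi. The identification of $X^*(T_v)$ with $\Lambda=\langle\alpha_1,\dots,\alpha_{2g}\rangle\subset\bar{\bQ}^\times$ is right, and the reduction to showing $\Lambda$ is torsion-free is exactly the point. The step you flagged as the obstacle is in fact easy: writing $\rho_\ell(Frob_v)=I+\ell^kN$ with $k=v_\ell(n)$ and $N\in M_{2g}(\bZ_\ell)$, one has $\det(yI-\ell^kN)=\ell^{2gk}\det((y/\ell^k)I-N)$, so the roots $y_i=\alpha_i-1$ are $\ell^k$ times algebraic integers over $\bZ_\ell$, giving $v_{\mathfrak l}(\alpha_i-1)\ge k$ directly (no Newton polygon needed). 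One point you use implicitly but should state: each $\alpha_i$ is an $\mathfrak l$-adic unit (since $Frob_v\in\GL_{2g}(\bZ_\ell)$ forces $\prod\alpha_i\in\bZ_\ell^\times$ while each $\alpha_i$ is integral), so the congruence really does propagate to negative exponents and hence to all of $\Lambda$. The root-of-unity elimination is then precisely Minkowski's lemma that the level-$n$ principal congruence subgroup of $\GL_N(\bZ_\ell)$ is torsion-free for $n\ge 3$ (with the usual caveat at $\ell=2$), and your case analysis handles it correctly, including the choice of an admissible $\ell\mid n$ with $\ell\ne p_v$ away from finitely many $v$.

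A minor bibliographic quibble: the $\ell$-independence of the $\alpha_i$ for abelian varieties is Weil, not Katz--Messing; the paper invokes Katz--Messing only to match the crystalline and \'etale Frobenii.
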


The following lemma is of its own interest and its proof is standard.

\begin{lemma}
Assume that $A$ is defined over $\bQ$. The number field $K^{\aT}$ is contained in $K^\et$. 
\end{lemma}

\begin{proof}
To simplify the notation, we enlarge $K^{\aT}$ to contain $K^\et$ and prove that they are equal.
Let $v$ be a finite place of $K^\et$ above $p$ such that $p$ splits completely in $K^\et/\bQ$ and identify $K^\et_v$ with $\bQ_p$ via $v$. Let $w$ be a place of $K^{\aT}$ above $v$. Denote by $\sigma$ the Frobenius in $\Gal(K^{\aT}_w/\bQ_p)=\Gal(K^{\aT}_w/K^\et_v)$. We consider the algebraic group $T_v$ generated by $\varphi_v\in G^{K^\et}_{\aT}(K^\et_v)$. If $v\in M_{max}$ as in \Cref{Tvmax}, then $T_v$ is connected and hence $T_v\subset G_{\aT,K^\et_v}$. This implies that $\varphi_v\in G_{\aT}(K^\et_v)$. For any $m,n$, let $W'\subset (H^1_{\mathrm{dR}}(A/K^{\aT}))^{m,n}$ be the $K^{\aT}$-linear span of all absolute Tate cycles in $(H^1_{\mathrm{dR}}(A/K^{\aT}))^{m,n}$. By \Cref{rmk_def}, there exists a $K$-linear subspace $W$ of $(H^1_{\mathrm{dR}}(A/K))^{m,n}$ such that $W'=W\otimes K^{\aT}$. Since $G_{\aT}$ acts trivially on $W'$ and $W$, the Frobenius $\varphi_v$ acts on $W\otimes_K K^\et_v$ trivially and $\phi_w$ acts on $W'\otimes_{K^{\aT}}K^{\aT}_w$ as the $\sigma$-linear extension of $\varphi_v$. Hence the elements in $W'$ that are stabilized by $\varphi_w$ are contained in $W\otimes_K K^\et_v$. That is to say that all absolute Tate cycles are defined over $K^\et_v$. As $m,n$ are arbitrary, we have $K^{\aT}_w=K^\et_v$. This implies that $p$ splits completely in $K^{\aT}/\bQ$ and hence $K^{\aT}= K^\et$ by the Chebotarev density theorem.
\end{proof}

\begin{rem}
For $A$ over $K$, the above lemma implies that $K^\aT$ is contained in $K^\et$ of $\Res^K_\bQ A$, which coincides with $K^\et$ of $A$ when $K/\bQ$ is Galois.
From \Cref{Bost}, we see that the field of definition of an absolute Tate cycle induced from an endomorphism of $A_{\bar{K}}$ is the same as the field of definition of this endomorphism. Hence $K^{\aT}$ contains the field of definition of all endomorphisms. Then, for $A$ over $\bQ$, the fields $K^{\aT}$ and $K^\et$ are the same if the field of definition of all endomorphisms is $K^\et$. This is the case when one can choose a set of $\ell$-adic Tate cycles all induced from endomorphisms of $A$ to cut out $G_\ell$. 
\end{rem}

Now we discuss some refinements of \Cref{Tvmax} and \Cref{Tvconn_C}. In the rest of this subsection, the field of definition $K$ of the polarized abelian variety $A$ is always assumed to be Galois over $\bQ$.
The main result is:
\begin{proposition} \label{TvQ}
Assume that $G^\circ_\ell(A)=\GSp_{2g}(\bQ_\ell)$. Then there exists a set $M$ of rational primes with natural density one such that for any $p\in M$ and any finite place $v$ of $K$ lying over $p$, the algebraic group $T_v$ generated by $\varphi_v^{m_v}$ (where $m_v=[K_v:\bQ_p])$ is of maximal rank. In particular, $T_v$ is connected for such $v$.\footnote{Our proof is a direct generalization of the proof of \Cref{Tvmax} by Serre. William Sawin pointed out to me that one may also prove this proposition by applying Chavdarov's method (\cite{Cha}) to $\Res^K_\bQ A$.}
\end{proposition}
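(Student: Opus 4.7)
The plan is to refine Serre's \Cref{Tvmax}, which yields density one in the finite places of $K^\et$ (equivalently density $1/[K^\et:\bQ]$ in the rational primes concentrated on completely split primes), into a statement directly of density one in rational primes, exploiting the assumption $G_\ell^\circ = \GSp_{2g}$ to handle primes of every decomposition type. Since $K/\bQ$ is Galois, every $v\mid p$ has the same residue degree $m_v = m_p = \mathrm{ord}(\tau_p)$, where $\tau_p \in \Gal(K/\bQ)$ is the Frobenius class of $p$, and the Frobenius tori $T_v$ for $v\mid p$ are Galois conjugate. Thus it suffices to produce a density-one set $M$ of primes $p$ such that for a single (hence every) $v\mid p$ the torus $T_v$ is a maximal torus of $G_\ell^\circ$. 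By Katz--Messing we identify the characteristic polynomial of $\varphi_v^{m_v}$ with that of the $\ell$-adic Frobenius $\rho_\ell(\sigma)^{m_p}$, where $\sigma \in \Gal(\bar\bQ/\bQ)$ is any Frobenius lift at $p$. The task therefore reduces to showing that, for density one of $p$, the eigenvalues $\alpha_1,\dots,\alpha_g,\lambda/\alpha_1,\dots,\lambda/\alpha_g$ of $\rho_\ell(\sigma)^{m_p}$ (with similitude factor $\lambda$) generate a free abelian subgroup of $\bar\bQ_\ell^\times$ of full rank $g+1$ and satisfy no root-of-unity multiplicative relation — the former delivering maximality and the latter the connectedness of $T_v$.

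I would carry out the analysis componentwise on $G_\ell$. For each $\tau \in \Gal(K/\bQ)$ of order $m$, let $X_\tau \subset G_\ell$ denote the union of connected components lying over $\tau$, and consider the algebraic power map $P_\tau\colon X_\tau \to G_\ell^\circ,\ g\mapsto g^m$. Following the Serre--Chi analysis of multiplicative relations among $p^m$-Weil numbers (\cite{Chi}*{Sec.~3}), the bad locus $Y_\tau\subset X_\tau$, consisting of $g$ for which $P_\tau(g)$ either fails to be regular semisimple, or has eigenvalues whose multiplicative group has rank $< g+1$, or has a nontrivial root-of-unity relation among its eigenvalues, decomposes as a countable union of proper Zariski-closed subvarieties cut out by explicit characters of a maximal torus of $\GSp_{2g}$. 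The Weil-number estimate $|\alpha_i| = p^{m/2}$ restricts the relevant characters to a finite list at each prime. To verify properness of each such subvariety on \emph{every} component one must exhibit a single test element $g_0\in X_\tau$ whose $m$-th power is regular semisimple and multiplicatively generic; the explicit root datum and Weyl group $(\bZ/2)^g\rtimes S_g$ of $\GSp_{2g}$, together with the classification of how a non-identity component may act on the identity component, furnish such $g_0$. An application of Serre's $\ell$-adic equidistribution theorem to $\rho_\ell\colon\Gal(\bar\bQ/\bQ)\to G_\ell$ then yields density one, within the set of primes with $\tau_p = \tau$, of those satisfying $\rho_\ell(\sigma)\notin Y_\tau$; summing over the finitely many classes $\tau$ produces $M$ of natural density one.

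The main obstacle is the properness check for $Y_\tau$ on the non-identity components of $G_\ell$. On the identity component the Serre--Chi argument applies almost verbatim; on a non-identity component $g\mapsto g^m$ is not a group homomorphism and its image in $G_\ell^\circ$ may \emph{a priori} lie in a proper subvariety (a union of root hyperplanes, or a torus-relation locus, etc.), so one must rule out this degeneracy by an explicit construction of $g_0$. This is where the hypothesis $G_\ell^\circ = \GSp_{2g}$ is essential: the transparent outer-automorphism structure of $\GSp_{2g}$ (all outer automorphisms are inner for $g\ge 2$) and the combinatorics of its root system make the construction tractable. Once $Y_\tau$ is known to be a proper subvariety of $X_\tau$, the equidistribution step is essentially routine, and the density-one conclusion delivers both the maximality of $T_v$ and its connectedness, since the absence of any root-of-unity relation among the eigenvalues of $\rho_\ell(\sigma)^{m_p}$ forces $\overline{\langle\rho_\ell(\sigma)^{m_p}\rangle}^{\mathrm{Zar}}$ to be a torus.
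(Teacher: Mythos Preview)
Your overall strategy---decompose into components indexed by $\Gal(K/\bQ)$, analyze the power map $g\mapsto g^m$ on each, exhibit a proper ``bad'' locus, and apply Chebotarev/equidistribution---is exactly the right shape, and is what the paper does. But there is a genuine gap in how you set it up.

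The group $G_\ell=G_\ell(A)$ is the Zariski closure of the image of $\Gal(\bar K/K)$, not of $\Gal(\bar\bQ/\bQ)$: since $A$ is only defined over $K$, there is no Galois representation $\rho_\ell\colon\Gal(\bar\bQ/\bQ)\to G_\ell(A)$, and hence no map $G_\ell(A)\to\Gal(K/\bQ)$ whose fibres would give your cosets $X_\tau$. Worse, under the hypothesis $G_\ell^\circ(A)=\GSp_{2g}$ the paper's \Cref{GSp} forces $G_\ell(A)$ to be \emph{connected}, so your entire discussion of non-identity components is vacuous when applied to $G_\ell(A)$ itself. Relatedly, your reduction ``the $T_v$ for $v\mid p$ are Galois conjugate, so one suffices'' is not justified: the conjugating element lies in $\Gal(\bar\bQ/\bQ)\smallsetminus\Gal(\bar K/K)$, which does not act on $H^1_\et(A_{\bar K},\bQ_\ell)$, and in fact the characteristic polynomials of $Frob_v$ and $Frob_{v'}$ on $A$ can differ (they match the polynomials of a single Frobenius on $A$ and on a Galois twist $A^\gamma$, which is a different variety).

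The paper repairs all of this by passing to the Weil restriction $B=\Res^K_\bQ A$, which \emph{is} defined over $\bQ$. Then one has a genuine representation of $\Gal(\bar\bQ/\bQ)$ into $G_\ell(B)$, and the quotient $G_\ell(B)/G^K_\ell(B)$ really does receive a surjection from $\Gal(K/\bQ)$, giving the cosets $\sigma G^K_\ell(B)$ that play the role of your $X_\tau$. The different places $v\mid p$ are captured simultaneously as the projections of $Frob_p^m\in G^K_\ell(B)$ to the various factors $G_\ell(A^\tau)$, so one must take the union over $\tau$ of the preimages of Serre's bad locus $Z$---this is why handling a single $v$ does not suffice. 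To show this union is still a proper closed subset of each coset (your ``properness check''), the paper uses two further ingredients you did not mention: Lombardo's theorem to identify $G^K_\ell(B)^\circ$ with $\bG_m\cdot\prod_{\sigma\in\bI}\Sp_{2g}$ (after grouping the $A^\sigma$ into $\bar K$-isogeny classes), and \Cref{GSp} again to control the image of the power map on each coset. Once you insert the passage to $B$, your outline becomes the paper's proof.
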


The idea of the proof is to apply the Chebotarev density theorem to a suitably chosen Zariski closed subset of the $\ell$-adic algebraic monodromy group of $B$, the Weil restriction $\Res^K_\bQ A$ of $A$. As we are in characteristic zero, the scheme $B$ is an abelian variety over $\bQ$. We have $B_K=\prod_{\sigma\in \Gal(K/\bQ)}A^\sigma,$ where $A^\sigma=A\otimes_{K,\sigma}K$. It is a standard fact that $\dB=\Res^K_\bQ \dA$ and hence the polarization on $A$ induces a polarization on $B$ over $\bQ$. Moreover, the polarization on $A$ induces a polarization on $A^\sigma$. Extend $\sigma$ to a map $\sigma:\bar{K}\rightarrow \bar{K}$.
The map $\sigma:A(\bar{K})\rightarrow A^\sigma(\bar{K}), P\mapsto \sigma(P)$ induces a map on Tate modules $\sigma: T_\ell(A)\rightarrow T_\ell(A^\sigma)$.\footnote{Strictly speaking, there is a natural map $A^\sigma\rightarrow A$ obtained by the base change. We use $\sigma$ to denote its inverse on $\bar{K}$-valued points.} This map is an isomorphism between $\bZ_\ell$-modules. 

\begin{lemma}\label{sameGl}
The map $\sigma: T_\ell(A)\rightarrow T_\ell(A^\sigma)$ induces an isomorphism between the $\ell$-adic algebraic monodromy groups $G_\ell(A)$ and $G_\ell(A^\sigma)$.
\end{lemma}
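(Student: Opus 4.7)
The plan is to show that the two Galois representations $\rho_\ell^A\colon \Gal(\bar K/K)\to \GL(T_\ell(A))$ and $\rho_\ell^{A^\sigma}\colon \Gal(\bar K/K)\to \GL(T_\ell(A^\sigma))$ are intertwined, up to an inner automorphism of $\Gal(\bar K/K)$, by the $\bZ_\ell$-linear isomorphism $\sigma\colon T_\ell(A)\to T_\ell(A^\sigma)$; taking Zariski closures then yields the claimed isomorphism.

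First I would fix a lift $\tilde\sigma\in \Gal(\bar K/\bQ)$ of the given $\sigma\in \Gal(K/\bQ)$. Unwinding the definition $A^\sigma = A\otimes_{K,\sigma} K$, a $\bar K$-point of $A^\sigma$ is the same datum as a $\bar K$-point of $A$, with the two identifications differing by composition with $\tilde\sigma$ on $\Spec\bar K$. A direct computation then gives, for every $\tau\in\Gal(\bar K/K)$ and every $P\in T_\ell(A)$, the compatibility
\[
\sigma\bigl(\rho_\ell^A(\tilde\sigma^{-1}\tau\tilde\sigma)\cdot P\bigr) \;=\; \rho_\ell^{A^\sigma}(\tau)\cdot \sigma(P).
\]
The content here is purely that the Galois action on the base-changed object transports back to the conjugated action on the original; it is a matter of sorting out conventions, and is the only nontrivial verification in the proof.

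Because $K/\bQ$ is Galois, $\Gal(\bar K/K)$ is normal in $\Gal(\bar K/\bQ)$, so conjugation by $\tilde\sigma$ is an automorphism of $\Gal(\bar K/K)$. Combined with the compatibility above, this shows that, after the $\bZ_\ell$-linear identification $\sigma$, the image of $\rho_\ell^{A^\sigma}$ inside $\GL(T_\ell(A^\sigma)\otimes\bQ_\ell)$ coincides with the image of $\rho_\ell^A$ inside $\GL(T_\ell(A)\otimes\bQ_\ell)$. Taking Zariski closures (which are preserved under transport by linear isomorphisms and are insensitive to reindexing the generators by an inner automorphism of the source group) produces the desired isomorphism $G_\ell(A)\xrightarrow{\sim} G_\ell(A^\sigma)$ induced by $\sigma$. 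The main obstacle is pinning down the Galois action conventions in the compatibility; everything afterward is formal, and independence of the choice of lift $\tilde\sigma$ follows from the fact that any two lifts differ by an element of $\Gal(\bar K/K)$.
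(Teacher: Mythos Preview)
Your proposal is correct and follows the same approach as the paper: identify the images of $\Gal(\bar K/K)$ in $\End(T_\ell(A))$ and $\End(T_\ell(A^\sigma))$ via the $\bZ_\ell$-linear isomorphism $\sigma$, then take Zariski closures. The paper's proof is a one-line statement of this identification, whereas you supply the explicit intertwining relation $\sigma\circ\rho_\ell^A(\tilde\sigma^{-1}\tau\tilde\sigma)=\rho_\ell^{A^\sigma}(\tau)\circ\sigma$ and invoke normality of $\Gal(\bar K/K)$ in $\Gal(\bar K/\bQ)$; this is precisely the content the paper leaves implicit.
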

\begin{proof}
Via $\sigma$, the image of $\Gal(\bar{K}/K)$ in $\End(T_\ell(A^\sigma))$ is identified as that of $\Gal(\bar{K}/K)$ in $\End(T_\ell(A))$. Hence $G_\ell(A)\simeq G_\ell(A^\sigma)$ as $T_\ell(-)^\vee=H^1_\et((-)_{\bar{K}},\bQ_\ell)$.
\end{proof}

We start from the following special case to illustrate the idea of the proof of \Cref{TvQ}. 
\begin{proposition}\label{TvQsp}
If $G_\ell(A)=\GSp_{2g,\bQ_\ell}$ and $A^\sigma$ is not geometrically isogenous to $A^\tau$ for any distinct $\sigma,\tau\in \Gal(K/\bQ)$, then there exists a set $M$ of rational primes with natural density $1$ such that for any $p\in M$ and any $v$ above $p$, the group $T_v$ is of maximal rank. That is, the rank of $T_v$ equals to the rank of $G_\ell(A)$. In particular, $T_v$ is connected for such $v$.
\end{proposition}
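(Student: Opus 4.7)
The plan is to apply Chebotarev density to the $\ell$-adic monodromy group of the Weil restriction $B = \Res^K_\bQ A$ after pinning down its structure, then use equidistribution to avoid a certain bad locus.

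\emph{Step 1 — Structure of $G_\ell(B)^\circ$.} Over $K$ we have $B_K = \prod_{\sigma \in \Gal(K/\bQ)} A^\sigma$, and by \Cref{sameGl} each $G_\ell(A^\sigma) = \GSp_{2g,\bQ_\ell}$. I would combine the pairwise non-isogeny assumption with Faltings' isogeny theorem (giving $\Hom_{\Gal(\bar K/K)}(V_\ell(A^\sigma), V_\ell(A^\tau)) = 0$ for $\sigma \ne \tau$) and a Goursat-type argument for the simple group $\Sp_{2g}$ to identify
\[
G_\ell(B)^\circ \;=\; \bigl\{(g_\sigma) \in \GSp_{2g,\bQ_\ell}^d : \nu(g_\sigma) = \nu(g_\tau) \text{ for all } \sigma,\tau\bigr\},
\]
the fiber product of $d = [K:\bQ]$ copies of $\GSp_{2g}$ over the similitude character $\nu$; this has rank $dg+1$. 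The component group $G_\ell(B)/G_\ell(B)^\circ$ embeds into $\Gal(K/\bQ)$ by permutation of factors, and under this embedding $\mathrm{Frob}_p$ maps to $\mathrm{Frob}_p|_K$.

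\emph{Step 2 — Bad locus in each component.} For each $c \in \Gal(K/\bQ)$ of order $n$, set
\[
Z_c \;:=\; \bigl\{x \in c \cdot G_\ell(B)^\circ : \pi_\tau(x^n) \text{ does not generate a maximal torus of } \GSp_{2g}\bigr\},
\]
where $\pi_\tau$ is projection to the $\tau$-factor (the condition is independent of the choice of $\tau$ in a single $c$-orbit). This is Zariski closed as a preimage of the non-regular-semisimple locus under the morphism $x \mapsto \pi_\tau(x^n)$. Writing $x = (g_\sigma) \cdot c$, one computes $\pi_\tau(x^n) = g_\tau g_{c\tau} \cdots g_{c^{n-1}\tau}$. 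Given any $h \in \GSp_{2g,\bar\bQ_\ell}$ generating a maximal torus, I realize it by taking $g_\tau = h$ and filling in the remaining $g_{c^i\tau}$ by scalar matrices whose common similitude matches $\nu(h)$ (possible over $\bar\bQ_\ell$ since $n$-th roots exist), and extending arbitrarily to the other $c$-orbits. Thus $Z_c$ is a proper Zariski closed subset.

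\emph{Step 3 — Chebotarev and conclusion.} By $\ell$-adic Chebotarev, the Frobenii $\mathrm{Frob}_p$ equidistribute with respect to Haar measure in the (Zariski dense) image of $\Gal(\bar\bQ/\bQ)$ in $G_\ell(B)(\bQ_\ell)$, and each proper Zariski closed subvariety $Z_c$ of a component has Haar measure zero. Hence the set $M$ of unramified rational primes $p$ with $\mathrm{Frob}_p \notin \bigcup_c Z_c$ has natural density one. For $p \in M$ and any $v|p$ in $K$, let $c = \mathrm{Frob}_p|_K$, whose order equals $f_v = m_v$; then $\mathrm{Frob}_p^{m_v} \in G_\ell(B)^\circ$ and $\pi_\tau(\mathrm{Frob}_p^{m_v})$ is a conjugate of $\mathrm{Frob}_v$ acting on $H^1_\ell(A)$, which generates a maximal torus of $\GSp_{2g}$. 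By Katz--Messing this transfers to the torus $T_v$ generated by $\varphi_v^{m_v}$; since a commutative closed subgroup of $\GSp_{2g}$ containing a maximal torus equals that maximal torus, $T_v$ is connected.

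\emph{Main obstacle.} The most delicate step is Step 1 — pinning down $G_\ell(B)^\circ$ as the full similitude fiber product. The Goursat analysis must use the vanishing of $\Gal$-equivariant Homs between the $V_\ell(A^\sigma)$ to exclude any diagonal identification of $\Sp_{2g}$-factors, and one must separately track the similitude characters (which are all cyclotomic, forcing them to coincide). Once this structural description of $G_\ell(B)^\circ$ is in hand, the bad-locus construction and the Chebotarev application are routine.
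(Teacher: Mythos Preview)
Your approach is essentially the same as the paper's: pass to the Weil restriction $B=\Res^K_\bQ A$, identify the connected monodromy group of $B$ as $\bG_m\cdot\prod_\sigma \Sp_{2g}$ (equivalently your similitude fibre product), build a bad locus in each coset via the $n$-th power map followed by projection, and apply $\ell$-adic Chebotarev. The paper obtains the structure of $G_\ell(B)^\circ$ by invoking Lombardo's theorem \cite{Lom}*{Thm.~4.1, Rem.~4.3}, which packages exactly the Goursat-plus-Faltings argument you outline; either route is fine.

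There is one genuine slip in your Step~2. You define $Z_c$ using a \emph{single} projection $\pi_\tau$ and note the condition is constant along a $\langle c\rangle$-orbit. But the places $v\mid p$ correspond bijectively to the $\langle c\rangle$-orbits on $\Gal(K/\bQ)$, and you need $T_v$ maximal for \emph{every} such $v$. So $Z_c$ must be the union, over one representative $\tau$ from each orbit, of the preimages of the non-regular locus under $x\mapsto\pi_\tau(x^n)$; this is exactly what the paper does with $W_\sigma=\bigcup_\tau W_{\sigma,\tau}$. Your properness construction still works orbit-by-orbit (and a finite union of proper closed subsets is proper), so the fix is immediate, but as written your $Z_c$ only controls one place above $p$.

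A smaller cosmetic point: in your properness argument you need square roots of $\nu(h)$ (to make scalar matrices with the right similitude), not $n$-th roots; and you should note that the union $\bigcup_c Z_c$ is stable under $G_\ell(B)$-conjugation (conjugation by $G_\ell(B)^\circ$ preserves each $Z_c$, and conjugation by an element in component $d$ sends $Z_c$ to $Z_{dcd^{-1}}$), since Chebotarev is applied to conjugacy classes.
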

\begin{proof}
We use the same idea as in the proof of \Cref{Tvmax} by Serre. He constructed a proper Zariski closed subvariety $Z\subset G_\ell(A)$ with the following properties (see also \cite{Chi}*{Thm. 3.7}) and concluded by applying the Chebotarev density theorem to $Z$:
\begin{enumerate}
\item $Z$ is invariant under conjugation by $G_\ell(A)$, and
\item if $u\in G_\ell(A)(\bQ_\ell)\setminus Z(\bQ_\ell)$ is semisimple, then the algebraic subgroup of $G_\ell$ generated by $u$ is of maximal rank.
\end{enumerate}
Since $G_\ell(A)$ is connected, $Z(\bQ_\ell)$ is of measure zero in $G_\ell(A)(\bQ_\ell)$ with respect to the usual Haar measure.\footnote{Although Haar measures are only well-defined up to a constant, the property of being measure zero is independent of the choice and we fix one Haar measure and refer to it as the Haar measure.} We will define a Zariski closed subset $W\subset G_\ell(B)$ which has similar properties as $Z$.

Let $G^K_\ell(B)$ be the Zariski closure of $\Gal(\bar{K}/K)$ in $\GL(H^1_\et(B_{\bar{\bQ}},\bQ_\ell))$. Via the isomorphism $H^1_\et(B_{\bar{\bQ}},\bQ_\ell)\cong \oplus_{\sigma\in \Gal(K/\bQ)} H^1_\et(A_{\bar{K}}^\sigma,\bQ_\ell)$ of $\Gal(\bar{K}/K)$-modules, we view $G_\ell^K(B)$ as a subgroup of $\prod_{\sigma\in \Gal(K/\bQ)} G_\ell(A^\sigma)$. By the assumption that $A^\sigma$'s are not geometrically isogenous to each other and \cite{Lom}*{Thm. 4.1, Rem. 4.3}, we have $G^K_\ell(B)\cong\bG_m \cdot\prod_{\sigma\in \Gal(K/\bQ)} SG_\ell(A^\sigma)$, where $SG_\ell\subset G_\ell$ is the subgroup of elements with determinant $1$. Indeed, $\Lie SG_\ell(A^\sigma)=\fsp_{2g,\bQ_\ell}$ of type $C$ and the representations are all standard representations and then Rem.~4.3 in \emph{loc.~cit.} verified that Lombardo's theorem is applicable in our situation. Then By \Cref{sameGl}, we have $G^K_\ell(B)\simeq\bG_m\cdot SG_\ell(A)^{[K:\bQ]}$. This is the neutral connected component of $G_\ell(B)$.

The map $\Gal(\bar{\bQ}/\bQ)\rightarrow G_\ell(B)(\bQ_\ell)\rightarrow G_\ell(B)(\bQ_\ell)/G^K_\ell(B)(\bQ_\ell)$ induces a surjection $$\Gal(K/\bQ)\twoheadrightarrow G_\ell(B)(\bQ_\ell)/G^K_\ell(B)(\bQ_\ell).$$
Given $\sigma\in \Gal(K/\bQ)$, we denote by $\sigma G^K_\ell(B)$ the subvariety of $G_\ell(B)$ corresponding to the image of $\sigma$ in the above map. Let $m$ be the order of $\sigma\in \Gal(K/\bQ)$. We consider those $p$ unramified in $K/\bQ$ whose corresponding Frobenii in $\Gal(K/\bQ)$ fall into $c_\sigma$, the conjugacy class of $\sigma$. We have $m_v=m$ for all $v|p$.

Consider the composite map $m_\tau:\sigma G^K_\ell(B)\rightarrow G^K_\ell(B)\rightarrow G_\ell(A^\tau)\simeq G_\ell(A)$, where the first map is defined by $g\mapsto g^m$ and the second map is the natural projection. Let $W_{\sigma,\tau}$ be the preimage of $Z$ and $W_\sigma$ be $\cup_{\tau\in \Gal(K/\bQ)}W_{\sigma,\tau}$. By definition, $W_\sigma$ is a proper Zariski subvariety of the connected variety $\sigma G^K_\ell(B)$, and hence the measure of $W_\sigma(\bQ_\ell)$ is zero.
\begin{claim} If the Frobenius $Frob_p$ (well-defined up to conjugacy) is not contained in the conjugacy invariant set $\cup_{\gamma\in c_\sigma} W_\gamma(\bQ_\ell)$, then for any $v|p$, the algebraic subgroup $T_v\subset G_\ell(A)$ is of maximal rank. 
\end{claim}
\begin{proof} The subvariety $\cup_{\gamma\in c_\sigma} W_\gamma$ is invariant under the conjugation of $G^K_\ell(B)$ because $Z$ is invariant under the conjugation of $G_\ell(A)$. Moreover, this subvariety is invariant under the conjugation of $G_\ell(B)$ since $\tau W_\sigma\tau^{-1}=W_{\tau\sigma\tau^{-1}}$ by definition. By second property of $Z$ and the definition of the map $m_\tau$, we see that the image of $Frob_p^m$ generates a maximal torus in $G_\ell(A^\tau)$.
For each $v|p$, the Frobenius $Frob_v$ is the image of $Frob_p^m$ in $G_\ell(A^\tau)$ for some $\tau$ and hence $T_v$ is of maximal rank. 
\end{proof}
Let $W$ be $\cup_{\sigma\in \Gal(K/\bQ)}W_\sigma$. It is invariant under the conjugation of $G_\ell(B)$. As each $W_{\sigma,\tau}(\bQ_\ell)$ is of measure zero in $G_\ell(B)(\bQ_\ell)$, so is $W(\bQ_\ell)$. By the Chebotarev density theorem (see for example \cite{Se09}*{Sec. 6.2.1}), we conclude that there exists a set $M$ of rational primes with natural density $1$ such that $Frob_p\notin W(\bQ_\ell)$. Then the proposition follows from the above claim.
\end{proof}

\begin{rem}
The assumption that $G_\ell(A)=\GSp_{2g,\bQ_\ell}$ can be weakened. The proof still works if one has $G_\ell^K(B)=\bG_m\cdot \prod SG_\ell(A^\sigma)$. In other words, the proposition holds true whenever \cite{Lom}*{Thm. 4.1, Rem. 4.3} is applicable. For example, when $A$ has odd dimension and is not of type IV in Albert's classification.
\end{rem}

The following property of $\GSp_{2g}$ is used in an essential way of our proof of \Cref{TvQ}. It is well-known, but we give a proof for the sake of completeness.

\begin{lemma}\label{GSp}
If $G$ is an algebraic subgroup of $\GL_{2g, \bQ_\ell}$ containing $\GSp_{2g,\bQ_\ell}$ as a normal subgroup, then $G=\GSp_{2g,\bQ_\ell}$. In particular, $G^\circ_\ell(A)=\GSp_{2g,\bQ_\ell}$ implies that $G_\ell(A)$ is connected.
\end{lemma}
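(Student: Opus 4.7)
The plan is to identify $G$ with a subgroup of the normalizer of $\GSp_{2g,\bQ_\ell}$ inside $\GL(V)$, where $V = H^1_\et(A_{\bar{K}},\bQ_\ell)$, and then compute this normalizer explicitly. Concretely, the hypothesis that $\GSp_{2g}$ is normal in $G$ gives $G \subseteq N_{\GL(V)}(\GSp_{2g})$, so it suffices to show $N_{\GL(V)}(\GSp_{2g}) = \GSp_{2g}$.

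First I would reduce to normalizing $\Sp_{2g}$: since $\Sp_{2g} = [\GSp_{2g},\GSp_{2g}]$ is the derived subgroup (and thus a characteristic subgroup) of $\GSp_{2g}$, any element normalizing $\GSp_{2g}$ also normalizes $\Sp_{2g}$. Hence $N_{\GL(V)}(\GSp_{2g}) \subseteq N_{\GL(V)}(\Sp_{2g})$. Next I would invoke two well-known structural facts about $\Sp_{2g}$: (i) the standard representation on $V$ is absolutely irreducible, so by Schur's lemma $Z_{\GL(V)}(\Sp_{2g}) = \bG_m \cdot \mathrm{Id}$; and (ii) the Dynkin diagram of $\Sp_{2g}$ is of type $C_g$ with trivial outer automorphism group, so every algebraic automorphism of $\Sp_{2g}$ is inner, i.e., induced by conjugation by an element of the adjoint group $\mathrm{PSp}_{2g}$.

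Given $g \in N_{\GL(V)}(\Sp_{2g})$, conjugation by $g$ defines an automorphism of $\Sp_{2g}$, which by (ii) equals conjugation by some $h \in \Sp_{2g}$ (lifting an element of $\mathrm{PSp}_{2g}$ along the simply connected cover). Then $gh^{-1}$ centralizes $\Sp_{2g}$, so by (i) it is a scalar $\lambda \in \bG_m$. Thus $g = \lambda h \in \bG_m \cdot \Sp_{2g} = \GSp_{2g}$, where the last equality uses that the similitude character $\nu \colon \GSp_{2g} \to \bG_m$ restricted to the center $\bG_m$ is the squaring map, which is surjective as a morphism of algebraic groups. This gives $N_{\GL(V)}(\Sp_{2g}) = \GSp_{2g}$, and combining with the inclusion in the previous paragraph, $N_{\GL(V)}(\GSp_{2g}) = \GSp_{2g}$. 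Therefore $G = \GSp_{2g,\bQ_\ell}$.

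For the ``in particular'' statement, note that $G_\ell^\circ(A)$ is normal in $G_\ell(A)$ by general principles, so the hypothesis $G_\ell^\circ(A) = \GSp_{2g,\bQ_\ell}$ places us in the situation just handled with $G = G_\ell(A)$. The conclusion $G_\ell(A) = \GSp_{2g,\bQ_\ell}$ then forces $G_\ell(A)$ to be connected. The only step requiring external input is the triviality of $\mathrm{Out}(\Sp_{2g})$ (and, for $g = 1$, care that $\Sp_2 = \SL_2$ still has trivial outer automorphisms); everything else is formal, so I do not anticipate any serious obstacle.
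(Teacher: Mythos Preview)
Your proposal is correct and follows essentially the same route as the paper: both reduce to showing that conjugation by any element of $G$ (equivalently, of the normalizer) gives an inner automorphism of $\Sp_{2g}$, using that the Dynkin diagram of type $C_g$ has no nontrivial automorphisms, and then conclude via Schur's lemma that the centralizer of $\Sp_{2g}$ in $\GL(V)$ is the scalars. Your reduction step (passing to $\Sp_{2g}$ as the derived, hence characteristic, subgroup) is slightly cleaner than the paper's phrasing via ``preserves determinant,'' but the argument is otherwise identical.
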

\begin{proof}
Let $g$ be a $\overline{\bQ_\ell}$-point of $G$. Then $\ad(g)$ induces an automorphism of $\GSp_{2g,\overline{\bQ_\ell}}$ by the assumption that $\GSp_{2g,\bQ_\ell}$ is a normal subgroup. As $\ad(g)$ preserves determinant, we view $\ad(g)$ as an automorphism of $\Sp_{2g,\overline{\bQ_\ell}}$. Since $\Sp_{2g,\overline{\bQ_\ell}}$ is a connected, simply connected linear algebraic group whose Dynkin diagram does not have any nontrivial automorphism, any automorphism of $\Sp_{2g,\overline{\bQ_\ell}}$ is inner. Hence $\ad(g)=\ad(h)$ for some $\overline{\bQ_\ell}$-point $h$ of $\Sp_{2g,\bQ_\ell}$. Then $g$ and $h$ differ by an element in the centralizer of $\Sp_{2g,\overline{\bQ_\ell}}$ in $\GL_{2g,\overline{\bQ_\ell}}$. Since the centralizer is $\bG_m$, we conclude that $g$ is in $\GSp_{2g,\bQ_\ell}(\overline{\bQ_\ell})$.
\end{proof}

\begin{proof}[Proof of \Cref{TvQ}]
Let $B$ be $\Res^K_\bQ A$. As in the proof of \Cref{TvQsp}, it suffices to construct a Zariski closed set $W\subset G_\ell(B)$ such that
\begin{enumerate}
\item $W(\bQ_\ell)$ is of measure zero with respect to the Haar measure on $G_\ell(B)(\bQ_\ell)$,
\item $W$ is invariant under conjugation by $G_\ell(B)$, and
\item if $u\in G_\ell(B)(\bQ_\ell)\setminus W(\bQ_\ell)$ is semisimple, then the algebraic subgroup of $G_\ell(B)$ generated by $u$ is of maximal rank.
\end{enumerate}

We first show that, to construct such $W$, it suffices to construct $W_\sigma\subset \sigma G^K_\ell(B)$ for each $\sigma\in \Gal(K/\bQ)$ such that 
\begin{enumerate}
\item $W_\sigma(\bQ_\ell)$ is of measure zero with respect to the Haar measure on $G_\ell(B)(\bQ_\ell)$,
\item $W_\sigma$ is invariant under conjugation by $G^K_\ell(B)$, and
\item if $u\in \sigma G_\ell^K(B)(\bQ_\ell)\setminus W(\bQ_\ell)$ is semisimple, then the algebraic subgroup of $G_\ell(B)$ generated by $u$ is of maximal rank.
\end{enumerate}
Indeed, given such $W_\sigma$, we define $W'$ to be $\cup_{\sigma\in \Gal(K/\bQ)}W_\sigma$. This set satisfies (1) and (3) and is invariant under conjugation by $G^K_\ell(B)$. We then define $W$ to be the $G_\ell(B)$-conjugation invariant set generated by $W'$. Since $[G_\ell(B):G^K_\ell(B)]$ is finite, $W$ as a set is a union of finite copies of $W'$ and hence satisfies (1) and (3). 

To construct $W_\sigma$, let $C\subset \Gal(K/\bQ)$ be the subgroup generated by $\sigma$. Consider $\{A^\tau\}_{\tau\in C}$. We have a partition $C=\sqcup_{1\leq i\leq r} C_i$ with respect to the $\bar{K}$-isogeny classes of $A^\tau$. These $C_i$ have the same cardinality $m/r$. For any $\alpha\in \Gal(K/\bQ)$, the partition of $\alpha C=\sqcup_{1\leq i\leq r}\alpha C_i$ gives the partition of $\{A^\tau\}_{\tau\in \alpha C}$ with respect to the $\bar{K}$-isogeny classes.

Consider the map $m_\alpha: \sigma G^K_\ell(B)\rightarrow G^K_\ell(B)\rightarrow G_\ell(A^\alpha)\simeq G_\ell(A)$ and define $W_{\sigma,\alpha}$ to be the preimage of $Z$ and $W_\sigma$ to be $\cup_{\alpha\in \Gal(K/\bQ)} W_{\sigma,\alpha}$ as in the proof of \Cref{TvQsp}. The proof of the claim there shows that $W_\sigma$ satisfies (2) and (3). 

Now we focus on (1). By the assumption and \Cref{GSp}, the group $G_\ell(A)$ is connected and hence $Z(\bQ_\ell)$ is of measure zero. Let $\gamma$ be $\sigma^r$. Consider $r:\sigma G^K_\ell(B)\rightarrow \gamma G^K_\ell(B)$ defined by $g\mapsto g^r$ and the composite map $(m/r)_\alpha: \gamma G^K_\ell(B)\rightarrow G^K_\ell(B)\rightarrow G_\ell(A^\alpha)\simeq G_\ell(A)$, where the first map is defined by $g\mapsto g^{m/r}$ and the second map natural projection. Then $m_\alpha=(m/r)_\alpha \circ r$. Let $W_r$ be $(m/r)_\alpha^{-1}(Z)$. Then $W_{\sigma,\alpha}=r^{-1}(W_r)$. Since any two of $\{A^\tau\}_{\tau=\alpha,\alpha\sigma,\cdots, \alpha\sigma^{r-1}}$ are not geometrically isogenous, the same argument as in the proof of \Cref{TvQsp} shows that if $W_r(\bQ_\ell)$ is of measure zero, so is $W_{\sigma,\alpha}(\bQ_\ell)$. The rest of the proof is to show that $W_r(\bQ_\ell)$ is of measure zero.

Notice that $G_\ell^\circ(B_K)=\bG_m\cdot\prod_{\sigma\in \bI}SG_\ell(A^\sigma)=\bG_m\cdot \Sp_{2g}^{|\bI|},$ where $\bI$ is a set of representatives of all isogeny classes in $\{A^\sigma\}_{\sigma\in \Gal(K/\bQ)}$.
 
Since the centralizer of $\GSp_{2g}$ in $\GL_{2g}$ is $\bG_m$ and $G^\circ_\ell(B)$ is a normal subgroup of $G_\ell(B)$, the map $(m/r)_\alpha$ is up to a constant the same as the following map:
$$\gamma G^K_\ell(B)\rightarrow \mathrm{Isom}_{\bQ_\ell}(H^1_\et(A^{\alpha\gamma}_{\bar{K}}, \bQ_\ell), H^1_\et(A^{\alpha}_{\bar{K}},\bQ_\ell))\cong \GL (H^1_\et(A^\alpha_{\bar{K}},\bQ_\ell))\rightarrow \GL(H^1_\et(A^\alpha_{\bar{K}},\bQ_\ell)),$$
where the first map is the natural projection, the middle isomorphism is given by a chosen isogeny between $A^\alpha$ and $A^{\alpha\gamma}$, and the last map is $g\mapsto g^{m/r}$.

The fact that $\gamma G^K_\ell(B)$ normalizes $G^\circ_\ell(B_K)$ allows us to apply \Cref{GSp} to the image of the above map and see that 
the above map factors through $$\GSp(H^1_\et(A^\alpha_{\bar{K}},\bQ_\ell))\rightarrow \GSp(H^1_\et(A^\alpha)), g\mapsto g^{m/r}$$ and hence $W_{\sigma,\alpha}(\bQ_\ell)$, being the preimage of a measure zero set under the above map, is of measure zero.
 \end{proof}

\subsection{The Mumford--Tate conjecture and the proofs of \Cref{thm_Q} and \Cref{thm_QMT}}\label{sub_MT}
\begin{conj}[Mumford--Tate]\label{conjMT}
For any rational prime $\ell$, we have $G_\ell^\circ(A)=G\MT(A)\otimes \bQ_\ell$ via the comparison isomorphism between the \'etale and the Betti cohomologies.
\end{conj}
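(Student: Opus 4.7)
The Mumford--Tate conjecture is one of the central open problems in arithmetic geometry, and no complete proof is available in general; the plan below is the classical strategy that succeeds in the special cases used throughout the paper (CM abelian varieties, elliptic curves, prime dimension, etc.) and which underlies the work of Serre, Deligne, Faltings, Pink, Chi, Tankeev, and Vasiu. The paper itself takes this conjecture as an input in \Cref{thm_Q}, rather than proving it afresh, so any proposal here must be the outline of an attack rather than a full argument.

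The easy inclusion $G_\ell^\circ \subseteq G\MT \otimes \bQ_\ell$ follows from Deligne's theorem that every Hodge cycle on an abelian variety is absolutely Hodge. Applied to $\ell$-adic étale cohomology (through the de Rham--étale comparison of $p$-adic Hodge theory, as worked out by Blasius), this says that after replacing $K$ by a sufficiently large finite extension every Hodge cycle is fixed by $\Gal(\bar K/K)$, so the connected monodromy group stabilizes all Hodge cycles and therefore sits inside $G\MT \otimes \bQ_\ell$. Once this is in hand, the Hodge cocharacter $\mu$ factors through both sides and the two groups share the same center after twist.

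For the reverse inclusion one combines two structural inputs. First, Faltings' isogeny theorem identifies the centralizer of $G_\ell^\circ$ in $\End(H^1_\et(A_{\bar K}, \bQ_\ell))$ with $\End_{\bar K}(A)\otimes \bQ_\ell$, which equals the centralizer of $G\MT$ on the Betti side; so $G_\ell^\circ$ and $G\MT \otimes \bQ_\ell$ have the same centralizer. Second, by Serre's theorem (\Cref{Tvmax}) there is a density-one set of places $v$ for which the Frobenius torus $T_v$ is a maximal torus of $G_\ell^\circ$; via Katz--Messing, $T_v$ also embeds in $G\MT \otimes \bQ_\ell$, pinning down the ranks and the weights of the cocharacter $\mu$ on both sides. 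Together these force $G_\ell^\circ$ and $G\MT \otimes \bQ_\ell$ to be reductive subgroups of the same $\GL$ with the same rank, the same centralizer, and a common cocharacter of weights $(0,1)$.

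The genuinely hard step — and the reason the conjecture remains open in general — is to upgrade this coincidence of invariants to an equality of reductive groups. The standard device is Pink's classification of absolutely irreducible strong Mumford--Tate pairs, which reduces the problem to a finite list of candidates once the tautological representation is known to be irreducible; for each of the special situations listed in the introduction, one eliminates the unwanted candidates by exploiting the action of $\End_{\bar K}(A)$, the Albert classification, dimension parity, or (in the prime-dimension case) the small supply of reductive groups with an absolutely irreducible $2g$-dimensional representation. For a general polarized abelian variety neither irreducibility nor the enumeration of candidate groups is currently tractable, and this is where the proposal breaks down in full generality.
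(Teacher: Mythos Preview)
Your assessment is correct: \Cref{conjMT} is stated in the paper as a \emph{conjecture}, not a theorem, and the paper supplies no proof. It is used as a standing hypothesis (e.g.\ in \Cref{Qcase}, \Cref{=rk}) and is invoked through the known cases cited in the introduction. There is therefore no ``paper's own proof'' to compare against, and you were right to say so explicitly.

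A few small corrections to your outline, since you offered one:
\begin{itemize}
\item For the inclusion $G_\ell^\circ \subseteq G\MT\otimes\bQ_\ell$ you do not need Blasius or $p$-adic Hodge theory. Deligne's theorem that Hodge cycles on abelian varieties are absolutely Hodge already furnishes the $\ell$-adic \'etale component directly, and Galois fixes it after a finite extension; that is the whole argument. Blasius enters only for the crystalline realization (and is cited in the paper for \Cref{HisdRT}, not for \Cref{conjMT}).
\item Your sentence ``these force $G_\ell^\circ$ and $G\MT\otimes\bQ_\ell$ to \ldots\ have the same rank'' overstates what Serre's theorem buys. Knowing that $T_v$ is a maximal torus of $G_\ell^\circ$ and sits inside $G\MT\otimes\bQ_\ell$ only gives $\mathrm{rk}\,G_\ell^\circ \leq \mathrm{rk}\,G\MT$; the reverse inequality is precisely the hard content. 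As the paper notes after \Cref{keylem}, equality of ranks is in fact \emph{equivalent} to the conjecture (given Faltings and Zarhin's key lemma), so this cannot be an input.
\end{itemize}
With those caveats, your sketch of the classical strategy (Deligne for one inclusion, Faltings plus Frobenius tori plus Pink's classification for the other, with the enumeration of Mumford--Tate pairs as the bottleneck) is an accurate summary of how the known cases are handled.
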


\begin{lemma}\label{=rk}
If \Cref{conjMT} holds for the abelian variety $A$, then the reductive groups $G_\ell(A)$, $G_\aT(A)$, and $G\MT(A)$ have the same rank.
\end{lemma}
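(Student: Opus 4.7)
The plan is to sandwich the ranks of $G^L$ and $G_{\mathrm{dR}}$ between $\mathrm{rk}(G_\ell)$ and $\mathrm{rk}(G_{\mathrm{MT}})$, using the Mumford--Tate conjecture to collapse the two endpoints. Since rank is insensitive to taking neutral connected components and to base change, and the Mumford--Tate conjecture asserts $G_\ell^\circ = G_{\mathrm{MT}}\otimes\bQ_\ell$, we immediately get $\mathrm{rk}(G_\ell)=\mathrm{rk}(G_{\mathrm{MT}})$. It therefore suffices to sandwich $\mathrm{rk}(G^L)$ and $\mathrm{rk}(G_{\mathrm{dR}})$ between these two values.

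For the upper bound, \Cref{dRTinMT} gives an inclusion $G_{\mathrm{dR},\bC}\subset G_{\mathrm{MT},\bC}$, so $\mathrm{rk}(G_{\mathrm{dR}})\leq \mathrm{rk}(G_{\mathrm{MT}})$. Then \Cref{GrdRbasic}(1) gives $G^L\subset G_{\mathrm{dR}}\otimes L$, so $\mathrm{rk}(G^L)\leq\mathrm{rk}(G_{\mathrm{dR}})$.

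For the lower bound, pick any $v\in M_{\max}$ as in \Cref{Tvmax}; by construction $T_v$ is a maximal torus of $G_\ell$, so $\mathrm{rk}(T_v)=\mathrm{rk}(G_\ell)$. On the other hand, the very definition of $T_v$ places it inside $G^L_{L_v}$ (coming from $\varphi_v^{m_v}\in G^L(L_v)$), while the identification of $T_v$ with the Zariski closure of $\langle\mathrm{Frob}_v\rangle$ in $G_\ell$ is exactly the Katz--Messing comparison. Thus $\mathrm{rk}(G^L)\geq \mathrm{rk}(T_v)=\mathrm{rk}(G_\ell)$, and we can chain
\[
\mathrm{rk}(G_\ell)\;\leq\;\mathrm{rk}(G^L)\;\leq\;\mathrm{rk}(G_{\mathrm{dR}})\;\leq\;\mathrm{rk}(G_{\mathrm{MT}})\;=\;\mathrm{rk}(G_\ell),
\]
forcing all four ranks to coincide.

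There is no real obstacle here: everything rests on machinery already set up in the excerpt (Serre's theorem on maximality of Frobenius tori, Katz--Messing for the cross-identification of $T_v$, and the chain of inclusions $T_v\subset G^L\subset G_{\mathrm{dR}}\subset G_{\mathrm{MT}}$ after suitable base change). The only subtlety worth flagging is that $T_v\subset G_\ell$ is only well defined up to conjugacy, but this has no effect on rank, so the sandwich argument goes through verbatim.
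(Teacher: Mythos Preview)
Your argument is correct and follows essentially the same route as the paper's own proof: use the Mumford--Tate conjecture to equate $\mathrm{rk}(G_\ell)$ and $\mathrm{rk}(G_{\mathrm{MT}})$, invoke \Cref{Tvmax} to produce a Frobenius torus $T_v$ of maximal rank sitting inside $G^L$, and use the chain of inclusions $T_v\subset G^L\subset G_{\mathrm{dR}}\subset G_{\mathrm{MT}}$ (after base change) to force equality of ranks. One cosmetic point: \Cref{GrdRbasic}(1) literally gives $G^L\subset G^L_{\mathrm{dR}}$ rather than $G^L\subset G_{\mathrm{dR}}\otimes L$, but the containment you want follows immediately from \Cref{relinabs} (every de Rham--Tate cycle is relatively de Rham--Tate), so there is no gap.
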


\begin{proof}
\Cref{conjMT} implies that $G_\ell$ and $G\MT$ have the same rank. Then by \Cref{Tvmax}, there are infinitely many finite places $v$ such that the Frobenius torus $T_v$ is a maximal torus of $G\MT$. Since, for these infinitely many places, $T_v$ is a subtorus of $G_\aT$ except for finitely many $v$, we have that $G_\aT$ has the same rank as $G\MT$ by \Cref{dRTinMT}. 
\end{proof}

The assertion of the above lemma is equivalent to the Mumford--Tate conjecture by the following lemma due to Zarhin and the Faltings isogeny theorem (see for example \cite{Vasiu}*{Sec. 1.1}).

\begin{lemma}[\cite{Zar}*{Sec. 5, key lemma}]\label{keylem}
Let $V$ be a vector space over a field of characteristic zero and $H\subset G\subset \GL(V)$ be connected reductive groups. Assume that $H$ and $G$ have the same rank and the same centralizer in $\End(V)$. Then $H=G$.
\end{lemma}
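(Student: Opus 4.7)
The plan is to work with Lie algebras and root data, reducing the assertion to a weight-space analysis. Since the statement is preserved under base change, I would first extend scalars to an algebraic closure and assume the base field is algebraically closed of characteristic zero. The rank equality then gives a common maximal torus $T \subset H \subset G$, since a maximal torus of $H$ is maximal in $G$. Decomposing $V = \bigoplus_\mu V_\mu$ into $T$-weight spaces yields a simultaneous weight decomposition, and the inclusion $\Lie H \subset \Lie G$ induces an inclusion of root systems $\Phi_H \subseteq \Phi_G$ relative to $T$. The goal is to promote this to an equality: once $\Phi_H = \Phi_G$, the central parts of $\Lie H$ and $\Lie G$ inside $\Lie T$ also coincide (both are the subtorus of $T$ annihilated by the common root system), so $\Lie H = \Lie G$ and hence $H = G$ by connectedness.

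To exploit the centralizer hypothesis, I would translate it into weight-space language. An endomorphism $X = \bigoplus X_\mu \in \bigoplus_\mu \End(V_\mu) = Z_{\End(V)}(T)$ centralizes $H$ precisely when $X_{\mu+\alpha} \circ e_\alpha = e_\alpha \circ X_\mu$ holds for every $\alpha \in \Phi_H$ and every root vector $e_\alpha \colon V_\mu \to V_{\mu+\alpha}$, and centralizes $G$ when the analogous identity holds for every $\alpha \in \Phi_G$. The hypothesis $Z_{\End(V)}(H) = Z_{\End(V)}(G)$ therefore asserts that, on the space $Z_{\End(V)}(T)$, the extra intertwining relations indexed by $\Phi_G \setminus \Phi_H$ are automatically implied by those indexed by $\Phi_H$.

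The main obstacle is to deduce from this redundancy that $\Phi_G \setminus \Phi_H$ is empty. I would argue by contradiction: assume some $\beta \in \Phi_G \setminus \Phi_H$ exists, and construct an $X = \bigoplus X_\mu$ satisfying every $\Phi_H$-intertwining relation but failing the $\beta$-intertwining at some weight $\mu$ with $V_\mu, V_{\mu+\beta} \neq 0$. The strategy is to decompose each weight space $V_\mu$ according to the $H$-isotypic decomposition of $V$ and to choose each $X_\mu$ with enough freedom to be compatible with the $\Phi_H$-relations (which only link weights along $\Phi_H$-strings) while violating the single additional $\beta$-relation; the reductivity of $H$ and the absence of $\beta$ from $\Phi_H$ should supply the needed degrees of freedom. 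Such an $X$ would contradict the centralizer identity, forcing $\Phi_H = \Phi_G$. This last construction is the crux of the proof, since it is the only place where the rank hypothesis and the centralizer hypothesis must combine nontrivially; everything else is formal.
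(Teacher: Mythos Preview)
The paper does not prove this lemma; it is quoted from Zarhin without argument, so there is no in-paper proof to compare against.

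Your reduction to showing $\Phi_H=\Phi_G$ is sound, but the ``crux'' step is circular as written. You want to show that whenever $\beta\in\Phi_G\setminus\Phi_H$ there exists $X\in Z_{\End(V)}(H)$ failing the $\beta$-intertwining relation, and then contradict the hypothesis $Z_{\End(V)}(H)=Z_{\End(V)}(G)$. But by the double commutant theorem the hypothesis is equivalent to $A_H=A_G$, where $A_H,A_G\subset\End(V)$ denote the associative subalgebras spanned by the images of $H$ and $G$; since $e_\beta|_V\in\Lie G\subset A_G=A_H$, it already commutes with every element of $Z_{\End(V)}(H)$, so the $X$ you propose cannot exist. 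More concretely, the centralizer hypothesis forces each irreducible $G$-constituent of $V$ to remain irreducible on restriction to $H$; on such an irreducible block $Z_{\End(V)}(H)$ consists only of scalars, and the ``degrees of freedom'' you invoke vanish entirely. Thus the implication you need---that $\beta\notin\Phi_H$ guarantees such an $X$---is, in the irreducible case, precisely the statement of the lemma itself, and your sketch gives no independent argument for it. A correct proof has to use the equal-rank hypothesis for more than producing a common torus: after reducing (via the double commutant theorem) to $V$ irreducible for both groups, one must compare $H$ and $G$ by some further invariant on the shared torus---for instance, first matching the centers via $Z_{\Lie G}(\Lie H)=Z(\Lie G)$, then handling the semisimple parts. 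You should consult Zarhin's original argument for the missing step.
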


Using this lemma, we prove a special case of \Cref{conj_main}.

\begin{theorem}\label{Qcase}
If the polarized abelian variety $A$ is defined over $\bQ$ and $G_\ell(A)$ is connected, then the centralizer of $G_{\aT}^\circ$ in $\End(H^1_{\mathrm{dR}}(A/\bQ))$ is $\End(A)\otimes \bQ$ and moreover, \Cref{conjMT} implies \Cref{conj_main}.
\end{theorem}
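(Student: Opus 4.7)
The plan is to deduce the theorem from two inputs: (i) a computation of the centralizer of the identity component $G_{\mathrm{dR}}^\circ$ in $\End(H^1_{\mathrm{dR}}(A,\bQ))$, using that $K=\bQ$ together with the strengthening of Bost's theorem recorded in \Cref{Bost}; and (ii) an application of Zarhin's key lemma (\Cref{keylem}) once the Mumford--Tate conjecture is assumed.

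For step (i), the decisive feature of $K=\bQ$ is that at every unramified prime $p$ one has $m_p=1$, so the absolute Frobenius equals the relative Frobenius: $\varphi_p=\varphi_p^{m_p}$. Connectedness of $G_\ell$ means $K^{\et}=\bQ$, so \Cref{Tvmax} produces a set $M$ of rational primes of natural density one for which $T_p$ is connected (in fact a maximal torus of $G_\ell$). For each such $p$ the connected subgroup $T_p\subseteq G_{\mathrm{dR},\bQ_p}$ must lie in $G_{\mathrm{dR},\bQ_p}^\circ$, and hence $\varphi_p\in G_{\mathrm{dR}}^\circ(\bQ_p)$. Now if $s\in \End(H^1_{\mathrm{dR}}(A,\bQ))$ centralizes $G_{\mathrm{dR}}^\circ$, then, regarding $s$ as a tensor in $H^1_{\mathrm{dR}}(A,\bQ)^{1,1}$, the commutation relation $\varphi_p s=s\varphi_p$ for $p\in M$ reads $\varphi_p(s)=s$. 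Thus $s$ is a $1$-de Rham--Tate cycle, and \Cref{Bost} places $s$ in $\End(A)\otimes\bQ$. The reverse inclusion is \Cref{Bost} applied to $G_{\mathrm{dR}}$ itself.

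For step (ii), assuming the Mumford--Tate conjecture, \Cref{=rk} gives $\mathrm{rk}(G_{\mathrm{dR}}^\circ)=\mathrm{rk}(G_{\mathrm{MT}})$. Faltings' isogeny theorem identifies the centralizer of $G_{\mathrm{MT}}$ in $\End(H^1_{\mathrm{B}})$ with $\End(A)\otimes\bQ$, matching the centralizer computed in (i). By \Cref{dRTinMT} we have $G_{\mathrm{dR},\bC}^\circ\subseteq G_{\mathrm{MT},\bC}$, both connected reductive, of the same rank and with the same centralizer in $\End(H^1)$, so \Cref{keylem} forces $G_{\mathrm{dR},\bC}^\circ=G_{\mathrm{MT},\bC}$. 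Combined with $G_{\mathrm{dR},\bC}\subseteq G_{\mathrm{MT},\bC}$, this yields $G_{\mathrm{dR},\bC}=G_{\mathrm{MT},\bC}$, which by \Cref{MTred} is equivalent to \Cref{conj_main}.

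The main obstacle is the passage in step (i) from the a priori nonconnected $G_{\mathrm{dR}}$ to its identity component $G_{\mathrm{dR}}^\circ$: it depends essentially on $K=\bQ$, because only then is the Frobenius we use to generate $T_p$ the same Frobenius element that we need to land inside $G_{\mathrm{dR}}^\circ$. Over a general number field the gap between absolute and relative Frobenii leaves only almost-connectedness of an auxiliary group visible, and the analogue of step (i) becomes conjectural (cf.\ \Cref{relBost}), which is why the argument here does not immediately globalize.
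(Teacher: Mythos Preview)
Your argument is correct and matches the paper's proof essentially line for line: use $K^{\et}=\bQ$ together with \Cref{Tvmax} to get $\varphi_p\in T_p(\bQ_p)\subset G_{\mathrm{dR}}^\circ(\bQ_p)$ for a density-one set of primes, so any element of the centralizer of $G_{\mathrm{dR}}^\circ$ is a $1$-de Rham--Tate cycle and hence algebraic by \Cref{Bost}; then conclude via \Cref{=rk} and \Cref{keylem}. One small remark: the fact that the centralizer of $G_{\mathrm{MT}}$ in $\End(H^1_{\mathrm{B}})$ equals $\End(A)\otimes\bQ$ is classical Hodge theory for abelian varieties (endomorphisms of the polarized weight-one Hodge structure are exactly endomorphisms of $A$), not Faltings' isogeny theorem.
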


\begin{proof}
The connectedness of $G_\ell$ implies that $K^\et=\bQ$ and hence $K^\aT=\bQ$.
By definition, for all $p\notin \Sigma$, the Frobenius $\varphi_p$ fixes all absolute Tate cycles and hence $\varphi_p\in G_\aT(\bQ_p)$. Thus $T_p(\bQ_p)\subset G_\aT(\bQ_p)$.
By \Cref{Tvmax}, we see that $T_p$ is connected for a density one set $M_{max}$ of rational primes $p$ disjoint from $\Sigma$.  Therefore, for all $p$ in $M_{max}$, the Frobenius $\varphi_p\in T_p(\bQ_p)\subset G^\circ_{\aT}(\bQ_p)$ and any $s$ in the centralizer of $G^\circ_\aT$ is fixed by $\varphi_p$. In other words, $s$ is a $1$-absolute-Tate cycle and by \Cref{Bost}, $s\in \End^\circ(A)$. The second assertion follows directly from \Cref{red_aT_Q}, \Cref{=rk}, and \Cref{keylem} (applying to $H=G_\aT^\circ$ and $G=G\MT$).
\end{proof}

As in \cite{Pink}, we show that if the conjecture does not hold, then $G_\aT$ is of a very restricted form when we assume that $A$ is defined over $\bQ$ and $K^\et=\bQ$. We need the following definition to state our result.

\begin{defn} [\cite{Pink}*{Def. 4.1}]
A \emph{strong Mumford--Tate pair} (of weight $\{0,1\}$) over $K$ is a pair $(G,\rho)$ of a reductive algebraic group over $K$ and a finite dimensional faithful algebraic representation of $G$ over $K$ such that there exists a cocharacter $\mu:\bG_{m,\bar{K}}\rightarrow G_{\bar{K}}$ satisfying:
\begin{enumerate}
\item the weights of $\rho\circ \mu$ are in $\{0,1\}$,
\item $G_{\bar{K}}$ is generated by $G(\bar{K})\rtimes \Gal(\bar{K}/K)$-conjugates of $\mu$.
\end{enumerate}
A pair $(G,\rho)$ is called a \emph{weak Mumford--Tate pair} if there exist finitely many cocharacters $\mu_1,\dots, \mu_d$ of $G_{\bar{K}}$ such that $\rho\circ \mu_i$ are all of weights $0$ and $1$ and that $G_{\bar{K}}$ is generated by $G(\bar{K})$-conjugates of $\mu_1,\dots, \mu_d$. 
\end{defn}

See \cite{Pink}*{Sec. 4, especially Table 4.6, Prop. 4.7} for the list of strong Mumford--Tate pairs.

\begin{theorem}\label{MTpair}
If the polarized abelian variety $A$ is defined over $\bQ$ and $G_\ell(A)$ is connected, then there exists a normal subgroup $G$ of $G_{\aT}^\circ$ defined over $\bQ$ such that
\begin{enumerate}
\item
$(G,\rho)$ is a strong Mumford--Tate pair over $\bQ$, where $\rho$ is the tautological representation $\rho: G\subset G_{\aT}\rightarrow \GL(H^1_{\mathrm{dR}}(A/\bQ))$, and
\item
The centralizer of $G$ in $\End(H^1_{\mathrm{dR}}(A/\bQ))$ is $\End^\circ(A)$.
\end{enumerate}
If we further assume that $\End(A)$ is commutative, then we can take $G$ to be $G^\circ_{\aT}$.
\end{theorem}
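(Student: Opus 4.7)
The plan is to take $G$ to be the smallest normal algebraic subgroup of $G^\circ_{\mathrm{dR}}$ defined over $\bQ$ through which the Hodge cocharacter $\mu$ factors. Such a $G$ exists since, by \Cref{dRTinMT}, $\mu\colon\bG_{m,\bC}\to G_{\mathrm{MT},\bC}$ lands inside $G^\circ_{\mathrm{dR},\bC}$, and one can intersect all normal $\bQ$-subgroups of $G^\circ_{\mathrm{dR}}$ having this property. Equivalently, $G_{\bar\bQ}$ is generated by the $G^\circ_{\mathrm{dR}}(\bar\bQ)\rtimes\Gal(\bar\bQ/\bQ)$-orbit of the image of $\mu$, so normality in $G^\circ_{\mathrm{dR}}$ and Galois-stability (hence descent to $\bQ$) are automatic.

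To verify condition (1), the weights of $\rho\circ\mu$ are $\{0,1\}$ because $\mu$ is the Hodge cocharacter. For the generation condition I would let $G_0\subset G_{\bar\bQ}$ be the subgroup generated by $G(\bar\bQ)\rtimes\Gal(\bar\bQ/\bQ)$-conjugates of $\mu$; it is Galois-stable and contains the image of $\mu$, so the real content is to show $G_0$ is also normal in $G^\circ_{\mathrm{dR}}$, whereupon the minimality of $G$ forces $G_0=G$. This reduces to showing that any $G^\circ_{\mathrm{dR}}(\bar\bQ)$-conjugate of $\mu$, which automatically lies in the normal subgroup $G$, is already a $G(\bar\bQ)$-conjugate of $\mu$; this is a standard cocharacter-conjugacy analysis inside the connected reductive group $G$, paralleling the classical proof that $G_{\mathrm{MT}}$ is generated over $\bar\bQ$ by the $G_{\mathrm{MT}}(\bar\bQ)\rtimes\Gal$-orbit of $\mu$.

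For condition (2), the inclusion $\End^\circ(A)=Z_{\End(V)}(G^\circ_{\mathrm{dR}})\subset Z_{\End(V)}(G)$ is immediate from \Cref{Qcase}. For the reverse, since $G$ is normal, $G^\circ_{\mathrm{dR}}$ acts on $Z_{\End(V)}(G)$ by conjugation, with fixed part $Z_{\End(V)}(G^\circ_{\mathrm{dR}})=\End^\circ(A)$; the goal is to show this action is trivial, equivalently that $G^\circ_{\mathrm{dR}}=G\cdot Z^\circ$ with $Z^\circ=Z_{G^\circ_{\mathrm{dR}}}(G)^\circ$ acting trivially on $Z_{\End(V)}(G)$. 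The structure theory of reductive groups gives that $G$ contains every simple factor of the derived subgroup of $G^\circ_{\mathrm{dR}}$ on which $\mu$ projects nontrivially, so $G^\circ_{\mathrm{dR}}/G$ is a torus whose image in $\End(V)$ lies inside the commutative algebra $\End^\circ(A)$; a further argument then promotes this to the required triviality.

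For the final assertion, when $\End(A)$ is commutative both $Z_{\End(V)}(G)=\End^\circ(A)$ and $Z^\circ=Z_{G^\circ_{\mathrm{dR}}}(G)$ are commutative, with $Z^\circ$ central in $G^\circ_{\mathrm{dR}}$. The decomposition $G^\circ_{\mathrm{dR}}=G\cdot Z^\circ$, together with the fact that any central subtorus of $G^\circ_{\mathrm{dR}}$ through which the image of $\mu$ has a nontrivial component must already lie in $G$ by minimality, forces $Z^\circ\subset G$, hence $G=G^\circ_{\mathrm{dR}}$. The main obstacle I anticipate is the structural step in (2) above, ensuring that $Z^\circ$ acts trivially on the whole of $Z_{\End(V)}(G)$ and not merely that its fixed part recovers $\End^\circ(A)$; I expect this to follow from Pink's detailed analysis of strong Mumford--Tate pairs (cf.\ \cite{Pink}*{\S 4}) combined with the reductive-group structure of $G^\circ_{\mathrm{dR}}$.
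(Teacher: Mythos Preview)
Your construction of $G$ and the verification of (1) match the paper's approach.

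The real gap is in (2). Your structural route—showing that the conjugation action of $G^\circ_{\mathrm{dR}}$ on $Z_{\End(V)}(G)$ is trivial—does not have a clear endpoint, and you correctly flag this yourself. The difficulty is genuine: writing $G^\circ_{\mathrm{dR}}=G\cdot H$ with $H$ a connected normal complement, one gets $H\subset Z_{\End(V)}(G)$, but there is no structural reason for $H$ to be \emph{central} in that algebra, so the fixed points need not exhaust it. Pink's classification concerns the pair $(G,\rho|_G)$ alone and gives no control over how $H$ sits inside $Z_{\End(V)}(G)$. The paper proceeds quite differently and arithmetically: it shows directly that $\varphi_p\in G(\bQ_p)$ for $p$ in a set of density one, and then invokes \Cref{Bost}. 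To do this, it takes the $G$-fixed subspace $W\subset (H^1_{\mathrm{dR}})^{m,n}$ (which is $G^\circ_{\mathrm{dR}}$-stable by normality), and proves that the eigenvalues of $\varphi_p$ on $W$ are roots of unity: they are $\ell$-adic units via the \'etale picture, $p$-adic units because $W\otimes\bQ_p$ is a weakly admissible filtered $\varphi$-module on which the Hodge cocharacter (hence the Newton cocharacter) is trivial, and have controlled archimedean norm by the Weil conjectures; the product formula then forces them to be roots of unity. This $p$-adic Hodge-theoretic step is the missing idea in your proposal.

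Your argument for the final assertion also has a gap. The claim that ``minimality forces $Z^\circ\subset G$'' is not justified: $\mu$ factors through $G$ by construction, so it imposes no constraint on a central torus disjoint from $G$ (think of $\SL_2\times\bG_m$ with $\mu$ landing in $\SL_2$). The paper closes this by using the proof of (2), which yields $T_p\subset G$ for $p\in M_{max}$, together with $Z^\circ(G^\circ_{\mathrm{dR}})\subset Z^\circ(G_{\mathrm{MT}})$ (from \Cref{Qcase}) and Vasiu's theorem $Z^\circ(G_{\mathrm{MT}}\otimes\bQ_\ell)=Z^\circ(G_\ell)$ to conclude $Z^\circ(G^\circ_{\mathrm{dR}})\subset T_p\subset G$.
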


The following lemma constructs the cocharacter $\mu$. 
\begin{lemma}\label{Kbarcochar}
There exists a cocharacter $\mu:\bG_{m,\bar{K}}\rightarrow G_{\aT,\bar{K}}^\circ$ such that its induced filtration on $H^1\dR(A/\bar{K})$ is the Hodge filtration. Moreover, different choices of such cocharacters are conjugate by an element of $G_{\aT,\bar{K}}^\circ(\bar{K})$.
\end{lemma}
\begin{proof}
We use $v\notin \Sigma$ to denote a finite place of $K^\et$ (of $\Res^K_\bQ A$) such that $v$ splits completely in $K^\et/\bQ$. By \Cref{Tvmax}, there is a density one set (over $K^\et$) of places $v$ such that $T_v$ is connected and hence $\varphi_v\in G^\circ_\aT(K_v)$. Since $G^\circ_\aT$ is reductive (by \Cref{red_aT}), by \cite{D82}*{Prop.~3.1 (c)}, $G^\circ_\aT$ is the stabilizer of a finite set of elements $\{t_\alpha\}$ in $(H^1\dR(A/K))^{m_\alpha,n_\alpha}$. Since $\varphi_v\in G^\circ_\aT(K_v)$, we have $\varphi_v(t_\alpha)=t_\alpha$. Then by \Cref{Fil0}, $t_\alpha\in \Fil^0(H^1\dR(A/K))^{m_\alpha,n_\alpha}$. Since $H^1\dR(A/K_v)$ is a weakly admissible filtered module, by \cite{Kisin}*{Lem.~1.1.1 and Lem.~1.4.5}, there is a cocharacter $\mu_v: \bG_{m,K_v}\rightarrow G_{\aT,K_v}^\circ$ whose induced filtration is the Hodge filtration.

By \cite{Kisin}*{Lem.~1.1.1} and the existence of $\mu_v$, the subgroup $P$ of $G_{\aT,\bar{K}}^\circ$ preserving the Hodge filtration is parabolic and the subgroup $U$ of $G_{\aT,\bar{K}}^\circ$ acting trivially on the graded pieces of $H^1\dR(A/\bar{K})$ is the unipotent radical of $P$.\footnote{Strictly speaking, the existence of $\mu_v$ shows that after base change to $K_v$, the group $P_{K_v}$ is parabolic and $U_{K_v}$ is its unipotent radical. However, these two properties still hold over any subfield of $K_v$.} Moreover, the action of $P$ on the graded pieces is induced by a cocharacter of $P/U$. Then given a Levi subgroup of $P$, one can construct a cocharacter of $G_{\aT,\bar{K}}^\circ$ inducing the desired filtration and vice versa. Therefore, the assertions follow from the existence of a Levi subgroup over $\bar{K}$ and the fact that any two of such Levi subgroups are conjugate.
\end{proof}

\begin{proof}[Proof of \Cref{MTpair}]
Let $\mu$ be some cocharacter constructed in \Cref{Kbarcochar} and $G$ be the smallest normal $\bQ$-subgroup of $G_{\aT}^\circ$ such that $G(\bar{\bQ})$ contains the image of $\mu$. We first show that $(G,\rho)$ is a strong Mumford--Tate pair over $\bQ$.\footnote{The pair $(G,\rho)$ is canonical. Indeed, different choices of $\mu$ are conjugate to each other over $\bar{\bQ}$ by an element in $G_\aT^\circ(\bQ)$ and hence the definition of $G$ is independent of the choice of $\mu$.}

The weights of $\rho\circ\mu$ are $0$ or $1$ since the non-zero graded pieces of the Hodge filtration on $H^1\dR(A/\bar{K})$ are at $0$ and $1$. Since the subgroup of $G$ generated by $G_{\aT}^\circ(\bar{\bQ})\rtimes \Gal(\bar{\bQ}/\bQ)$-conjugates of $\mu$ must be defined over $\bQ$ and normal in $G^\circ_\aT$, this subgroup coincides with $G$. Since $G_{\aT}^\circ$ is connected and reductive (by \Cref{red_aT}) and the image of $\mu$ is contained in $G$, the set of $G_{\aT}^\circ(\bar{\bQ})$-conjugates of $\mu$ is the same as the set of $G(\bar{\bQ})$-conjugates of $\mu$. Hence $(G,\rho)$ is a strong Mumford--Tate pair over $\bQ$.\\

To show (2), by \Cref{Bost}, it suffices to show that $\varphi_p\in G(\bQ_p)$ for $p$ in a set of natural density $1$. By \Cref{Tvmax}, it suffices to show that for any $p\in M_{max}$, there exists an integer $n_p$ such that $\varphi_p^{n_p}\in G(\bQ_p)$.\footnote{By standard arguments, one can choose an $n$ independent of $p$.} To see this, we will use the weak admissibility of crystalline cohomology groups and the Riemann hypothesis part of the Weil conjecture for varieties over finite fields.

 For any $m,n$, let $W\subset (H^1_{\mathrm{dR}}(A/\bQ))^{m,n}$ be the largest $\bQ$-sub vector space with trivial $G$-action. Since $G$ is normal in $G^\circ_{\aT}$, the group $G^\circ_\aT$ acts on $W$.
Then for all $p\in M_{max}$, we have $\varphi_p\in G^\circ_\aT(\bQ_p)$ acts on $W\otimes \bQ_p$.
Since $G$ is reductive, there exist a finite set of such $W$ such that $G$ is the largest subgroup of $\GL(H^1\dR(A/\bQ))$ acting trivially on these $W$.
Since $\varphi_v\in G^\circ_\aT(\bQ_p)$ is semi-simple, in order to show that $\varphi_p^{n_p}\in G(\bQ_p)$, it suffices to prove that the eigenvalues of $\varphi_p$ acting on $W\otimes\bQ_p$ are all roots of unity.

Since $W\subset (H^1_{\mathrm{dR}}(A/\bQ))^{m,n}$, the eigenvalues of $\varphi_p$ are all algebraic numbers. Since $Frob_p$ acts on $(H^1_{\et}(A_{\bar{\bQ}},\bQ_\ell))^{m,n}(i)$ with all eigenvalues being $\ell$-adic units for $\ell\neq p$, the eigenvalues of $\varphi_p$ are also $\ell$-adic units. 
Now we show that these eigenvalues are $p$-adic units.
Let $H_p$ be the Tannakian fundamental group of the abelian tensor category generated by sub weakly admissible filtered $\varphi$-modules of $(H^1\cris(A/W(\bF_p))\otimes \bQ_p)^{m,n}$. For $p\in M_{max}$, by \cite{Pink}*{Prop. 3.13}, $H_p$ is connected. By \Cref{Fil0}, every absolute Tate cycle generates a trivial filtered $\varphi$-module. Then by the definition of $H_p$, we have $H_p(\bQ_p)\subset G_\aT(\bQ_p)$ and thus $H_p(\bQ_p)\subset G^\circ_\aT(\bQ_p)$. Hence $W\otimes \bQ_p$ is an $H_p$-representation and then by the Tannakian equivalence, the filtered $\varphi$-module $W\otimes \bQ_p$ is weakly admissible. 
Since $\mu$ is a cocharacter of $G$, then $\mu$ acts on $W\otimes \bQ_p$ trivially and hence by \Cref{Kbarcochar}, the filtration on $W\otimes \bQ_p$ is trivial. Then the Newton cocharacter is also trivial. In other words, the eigenvalues of $\varphi_p$ are $p$-adic units. By the Weil conjecture, the archimedean norms of the eigenvalues are $p^{(m-n)/2}$. Then by the product formula, the weight $\frac{m-n}2$ must be zero and all the eigenvalues are roots of unity.\\

We now prove the last assertion.
If $G\neq G_{\aT}^\circ$, then we have $G_{\aT}^\circ=GH$ where $H$ is some nontrivial normal connected subgroup of $G_{\aT}^\circ$ commuting with $G$ and $H\cap G$ is finite.\footnote{To see this, notice that as a connected reductive group, $G_{\aT}^\circ$ is the product of central torus and its derived subgroup and the intersection of these two subgroups is finite. Then one uses the decomposition results for tori and semi-simple groups.} Then $H$ is contained in the centralizer of $G$ and by (2), we have $H\subset \End^\circ(A)$. By the assumption on $\End^\circ(A)$, we see that $H$ is commutative and hence $H\subset Z^\circ(G^\circ_\aT)$, the neutral connected component of $Z(G_\aT^\circ)$. We will draw a contradiction by showing that $Z^\circ(G^\circ_\aT)\subset G$. By \Cref{Qcase}, we have 
$$Z(G^\circ_\aT)=G^\circ_\aT\cap\End^\circ(A)\subset G\MT \cap \End^\circ(A)=Z(G\MT),$$ and hence $Z^\circ(G^\circ_\aT)\subset Z^\circ(G\MT)$.
On the other hand, for all $p\in M_{max}$, the torus $T_p\subset G$. Hence we only need to show that $Z^\circ(G\MT)\subset T_p$ for some $p\in M_{max}$. Since it is enough to prove the statement on geometric points and the Frobenius torus generated by $\varphi_p$ maps to the torus generated by $Frob_p$ via an automorphism of $G\MT$ by \cite{K17}*{Cor.~2.3.1}, we only need to show that $Z^\circ(G\MT\otimes \bQ_\ell)\subset T_p\subset \GL(H^1_\et(A_{\bar{\bQ}},\bQ_\ell))$, where $T_p$ now denotes the torus generated by $Frob_p$. Since $T_p$ is a maximal torus, we have $T_p\supset Z^\circ(G_\ell)$. We then conclude by \cite{Vasiu}*{Thm.~1.3.1} asserting that $Z^\circ(G\MT\otimes \bQ_\ell)=Z^\circ(G_\ell)$.
\end{proof}

\subsection{A result of Noot and its consequence}\label{sub_Noot}
It is well-known that the Mumford--Tate group of $A$ is a torus if and only if $A$ has complex multiplication. In particular, $G_{\aT}^\circ(A)$ is a torus when $A$ has complex multiplication. In this subsection, we will show that the converse is also true.

\begin{proposition}\label{cm=1pt}
If $G_\aT^\circ$ commutes with the cocharacter $\mu$ constructed in \Cref{Kbarcochar},\footnote{This assumption does not depend on the choice of $\mu$, since different choices of $\mu$ are the same up to conjugation by elements in $G^\circ_\aT(\bar{K})$.} then $A$ has complex multiplication and hence \Cref{conj_main} holds for $A$.
\end{proposition}

\begin{corollary}\label{cm=torus}
The abelian variety $A$ has complex multiplication if and only if $G_\aT^\circ$ is a torus.
\end{corollary}

One may prove \Cref{cm=1pt} by using the idea in the second last paragraph in the proof of \Cref{red_aT}. We provides a proof here using a theorem of Noot on the deformation space of an ordinary abelian variety in positive characteristic. The following lemma shows that we have enough places with ordinary reduction to apply Noot's theorem.

\begin{lemma}\label{ordinary}
If $G_\aT^\circ$ commutes with the cocharacter $\mu$ constructed in \Cref{Kbarcochar}, then $A$ has ordinary reduction at a positive density of primes of degree one (that is, splitting completely over $\bQ$).
\end{lemma}

\begin{proof}
This is a direct consequence of the weak admissibility.
After replacing $K$ by a finite extension, we may assume that some fixed cocharacter $\mu$ constructed in \cref{Kbarcochar} is defined over $K$.
Let $v$ be a finite place of $K$ with residue characteristic $p$ and assume that $p$ splits completely in $K/\bQ$. Then $K_v\cong\bQ_p$. 
Let $\nu_v$ be the Newton (quasi-)cocharacter and fix a maximal torus $T\subset G_\aT^\circ$. Since the Frobenius torus $T_v^\circ$ is contained in $G^\circ_\aT$, the Newton cocharacter  $\nu_v$ factors through $G_\aT^\circ$. As in \cite{Pink}*{Sec.~1}, we define $S_{\mu}$ (resp. $S_{\nu_v}$) to be the set of $G_\aT^\circ(\overline{K}_v)\rtimes \Gal(\overline{K}_v/K_v)$-conjugates of $\mu$ (resp. $\nu_v$) factoring through $T(\overline{K}_v)$ in $\Hom((\bG_{m})_{\overline{K}_v}, T_{\overline{K}_v})\otimes_{\bZ}\bR$.
Since all the $G_\aT^\circ(\overline{K}_v)$-conjugacy of $\mu$ coincide with itself and $\mu$ is defined over $K_v$, we have $S_{\mu}=\{\mu\}$. 
By the weak admissibility, we have that $S_{\nu_v}$ is contained in the convex polygon generated by $S_{\mu}$ (see \cite{Pink}*{Thm.~1.3, Thm.~2.3}). Hence $S_{\nu_v}=S_{\mu}$.
Then we conclude that $A$ has ordinary reduction at $v$ by \cite{Pink}*{Thm.~1.5} if $v\notin \Sigma$.
\end{proof}

\begin{para}
We reformulate our assumption in the language of Noot. As in the proof of \Cref{Kbarcochar}, there is a finite set $\{t_\alpha\}$ with $t_\alpha\in (H^1_{\mathrm{dR}}(A/K))^{m_\alpha,n_\alpha}$ such that $G_\aT^\circ$ is the stabilizer of $\{t_\alpha\}$. We have shown that there is a density one set of places $v$ (over some finite extension of $K$) such that $\varphi_v(t_\alpha)=t_\alpha$ and $t_\alpha \in \Fil^0(H^1_{\mathrm{dR}}(A/K))^{m_\alpha,n_\alpha}$. We assume that $v\notin \Sigma$ and $A$ has ordinary good reduction at $v$. The later assumption holds for infinitely many $v$ under the assumption of \Cref{cm=1pt} by \Cref{ordinary}.
Such $t_\alpha$ is a `Tate cycle' in the sense of \cite{Noot}. In the formal deformation space of $A_{k_v}$, Noot defined the formal locus $\cN$ where the horizontal extensions of all $t_\alpha$ are still in $\Fil^0(H^1_{\mathrm{dR}}(A/K))^{m_\alpha,n_\alpha}$ (see \cite{Noot}*{Sec. 2} for details).
\end{para}

\begin{theorem}[\cite{Noot}*{Thm. 2.8}\footnote{Ananth~Shankar pointed out to me that one may use Kisin's results in \cite{Kisin}*{Sec. 1.5} and the property of canonical lifting to prove this result.}]\label{Noot}
The formal locus $\cN$ is a translate of a formal torus by a torsion point. Moreover, the dimension of $\cN$ equals to the dimension of the unipotent radical of the parabolic subgroup of $G_\aT^\circ$ which preserves the Hodge filtration of $A$.
\end{theorem}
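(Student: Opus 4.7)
The plan is to use Serre--Tate canonical coordinates on the ordinary deformation space to translate the defining condition of $\cN$ into multiplicative equations on a formal torus. By Serre--Tate theory, since $A_{k_v}$ is ordinary, the formal deformation functor of $A_{k_v}$ is represented by a formal torus $\widehat{T}$ over $W(k_v)$ whose identity section is the canonical lift $A^{\mathrm{can}}$; explicitly $\widehat{T} \cong \mathrm{Hom}_{\bZ_p}(T_p A_{k_v} \otimes T_p A^\vee_{k_v}, \widehat{\bG}_m)$. Trivializing the universal de Rham cohomology by horizontal sections of the Gauss--Manin connection identifies $H^1\dR$ over $\widehat T$ with $H^1\cris(A_{k_v}/W(k_v)) \otimes \cO_{\widehat T}$; in this trivialization the crystalline Frobenius is constant, and its unit-root decomposition $H^1\cris = U \oplus F$ splits the Hodge filtration precisely at the identity $A^{\mathrm{can}}$.

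The heart of the proof is to describe how the Hodge filtration varies with $q \in \widehat{T}$. At a point $q$ the filtration is the graph of a homomorphism $\phi(q) : F \to U$, and Katz's classical formula for Serre--Tate coordinates identifies $\phi$ with the composition of $\log q$ with the canonical isomorphism $\mathrm{Lie}\,\widehat T \cong \mathrm{Hom}(F,U)$. A relative de Rham--Tate cycle $t_\alpha \in H^{m,n}$ is constant in the horizontal trivialization and lies in $Fil^0$ at the identity by \Cref{GrdRbasic}(2); it therefore remains in $Fil^0$ at $q$ if and only if the derivation induced by $\phi(q)$ on $H^{m,n}$ annihilates $t_\alpha$. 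These are $\bZ_p$-linear equations in $\log q$, so their joint vanishing locus is a formal subtorus $\widehat T_0 \subset \widehat T$ whose Lie algebra is exactly the joint annihilator of the $t_\alpha$ inside $\mathrm{Hom}(F,U)$.

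The original lift $A$ corresponds to a point $q_A \in \widehat T(W(k_v))$ with $q_A \in \cN$, so $\cN = q_A \cdot \widehat T_0$. Because $\varphi_v$ fixes every $t_\alpha$, the Frobenius preserves $\cN$; combined with the standard fact that it acts on $\widehat T$ by $q \mapsto q^p$, this forces $q_A^{p-1} \in \widehat T_0$, i.e.\ the class of $q_A$ in $\widehat T / \widehat T_0$ is $(p-1)$-torsion, presenting $\cN$ as the translate of a formal subtorus by a torsion point. For the dimension claim, one identifies $\mathrm{Lie}\,\widehat T_0$ with the tangent space at the Hodge cocharacter $\mu_v$ to the $G^L_{L_v}$-orbit of the Hodge filtration inside the flag variety of $H^1\dR$; upon base change via an embedding $L_v \hookrightarrow \bC$ compatible with $\sigma$, this orbit becomes the Hermitian symmetric domain attached to $G^L_\bR$, yielding the asserted dimension.

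The main obstacle is Katz's identification of Serre--Tate coordinates with the variation of the Hodge filtration. Although this is a classical result, one must formulate it functorially enough to ensure that the Tate-cycle conditions cut out a formal subgroup rather than merely a formal subvariety; separately, the torsion-translate conclusion requires careful bookkeeping of the Frobenius action on these coordinates, especially in situations where the torsion may live only after a controlled base change.
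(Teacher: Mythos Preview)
The paper does not supply its own proof of this statement; it is quoted from Noot, with a footnote pointing to an alternative via Kisin's work on integral canonical models and canonical liftings. Your sketch is essentially that alternative, carried out directly in Serre--Tate coordinates, and it is basically sound.

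One step needs sharpening. You pass from ``$t_\alpha$ lies in $Fil^0$ at the canonical lift'' to ``the condition at $q$ is that the derivation $\phi(q)$ annihilates $t_\alpha$'', but the first fact alone does not give linearity: for a general element of $Fil^0(H^{m,n})$ the condition of remaining in $Fil^0$ over the family is genuinely polynomial in $\log q$. What makes it linear is that $t_\alpha$ has \emph{slope $0$}, since $\varphi_v(t_\alpha)=t_\alpha$. At the canonical lift the Hodge and slope filtrations coincide, and the nilpotent $N=\log q\in \Hom(F,U)$ lowers slope by exactly $1$ on $H^{m,n}$; writing $\exp(-N)t_\alpha=t_\alpha - N t_\alpha + \tfrac12 N^2 t_\alpha-\cdots$, the slope $-k$ component is $(-1)^k N^k t_\alpha/k!$, so membership in $Fil^0_0=\{\text{slope}\geq 0\}$ forces $N^k t_\alpha=0$ for every $k\geq 1$, which collapses to the single linear equation $N\cdot t_\alpha=0$. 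With this correction your description of $\Lie \widehat T_0$ as the annihilator in $\Hom(F,U)$ of all the $t_\alpha$ is exactly $\Lie G^L\cap \Hom(F,U)$, and the dimension identification with the Hermitian symmetric domain follows.

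A small remark on the torsion step: once you know the defining equations are linear (homogeneous) in $\log q$, the locus $\cN$ already passes through the identity of $\widehat T$ (indeed $t_\alpha$ lies in $Fil^0$ at the canonical lift, as you noted), so $\cN=\widehat T_0$ on the nose and $q_A\in \widehat T_0$. The Frobenius/torsion argument you give is therefore not needed in this particular setup, though it is the mechanism Noot uses in situations where one does not first establish linearity and instead deduces the subtorus structure from stability under $q\mapsto q^p$.
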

\begin{proof}
We use $\Gr(\dim A, H^1\dR(A/\overline{K}_v))$ to denote the Grassmannian parametrizing vector space $V\subset H^1\dR(A/\overline{K}_v)$ with $\dim V=\dim A$.
\cite{Noot}*{Thm.~2.8} asserts that $\dim \cN$ equals the dimension of the subvariety of $\Gr(\dim A, H^1\dR(A/\overline{K}_v))$ such that $t_\alpha$ remains in $\Fil^0$ for the filtration defined by $V$. In order to compute the dimension, we may restrict ourselves to $V$ over $K_v^{\rm{nr}}$ such that $V$ modulo $p_v$ equals the Hodge filtration on $H^1\dR(A/\overline{K}_v)$. By \cite{Kisin}*{Lem.~1.4.5}, such $V$ with $t_\alpha\in \Fil^0$ gives raise to a cocharacter $\mu_V$ into $G^\circ_\aT$. Since $\dim V=\dim A$, this cocharacter $\mu_V$ is conjugate to $\mu$ in \Cref{Kbarcochar} by elements in $\GL(H^1\dR(A/\overline{K}_v))$. Therefore, the set of $\mu_V$ which is conjugate to $\mu$ by elements in $G^\circ_\aT$ is of finite index in the set of all possible $\mu_V$. In other words, $\dim \cN$ equals to the dimension of the variety parametrizing $V$ such that $\mu_V$ is conjugate to $\mu$ by an element in $G^\circ_\aT$. This variety is $G^\circ_\aT/P$, where $P$ is the parabolic subgroup of $G^\circ_\aT$ that preserves the Hodge filtration. We conclude by the fact that $\dim G^\circ_\aT/P=\dim U$, where $U$ is the unipotent radical of $P$.
\end{proof}

\begin{proof}[Proof of \Cref{cm=1pt}]
By \Cref{Noot} and the assumption that $G_\aT^\circ$ commutes with $\mu$, the dimension of $\cN$ is $0$ and thus $A_{K_v}$ is a torsion point in the formal deformation space. By Serre--Tate theory, a torsion point corresponds to an abelian variety with complex multiplication. Hence $A$ has complex multiplication and the last assertion comes from \Cref{knowncase}.
\end{proof}

\section{The main theorem over $K$ and its proof}\label{connected}
In this section, we extend \Cref{thm_Q} to abelian varieties over some number field $K$. The main result is \Cref{thm_K}, which contains \Cref{thm_main} as special cases. The main difficulty of the proof is to show that $G\MT$ and $G_\aT^\circ$ have the same centralizer in $\End(H^1\dR(A/K))$. 
In \cref{sub_gpth}, we first show that in order to show the agreement of centralizers, one only needs to show that all irreducible subrepresentations of $G\MT$ in $H^1\dR(A/K)\otimes \bar{K}$ remain irreducible as representations of $G_\aT^\circ$. We also show that for $A$ simple, all irreducible subrepresentations of $G_\aT^\circ$ in $H^1\dR(A/K)\otimes \bar{K}$ are of same dimension and $\varphi_v$ permutes these irreducible subrepresentations. Two key inputs are a result of Pink classifying the tautological representation of $G_\ell^\circ$ and a result of Kisin (see \cref{dRtoet}) relating \'etale and crystalline Frobenii. 
In \cref{sub_pf}, we use the results in \cref{sub_gpth}, \Cref{Bost}, \Cref{TvQ} and \Cref{cm=1pt} to show that under the assumptions of \Cref{thm_K}, the centralizer of $G^\circ_\aT$ in $\End(H^1\dR(A/\bar{K}))$ coincides with that of $G\MT$ and then complete the proof.

\subsection{Group theoretical discussions}\label{sub_gpth}
\begin{para}\label{assumpA}
Throughout this section, $v$ is a finite place such that the algebraic group $G\MT(A)$ over $\bQ$ is unramified at $p_v$, the residue characteristic of $v$. This holds for all but finitely many $v$, so restricting ourselves to such $v$ is harmless for later applications. This condition is the assumption in \cite{K17}, which will be used in \cref{dRtoet}.
Let $\rho\MT: G\MT\rightarrow \GL(H^1_{\mathrm{dR}}(A/K))$ be the tautological algebraic representation of the Mumford--Tate group on de Rham cohomology.\footnote{All Hodge cycles are absolute Tate and hence defined in $(H^1\dR(A/\bar{K}))^{m,n}$. Moreover, since all Hodge cycles are absolute Hodge, the action of $\Gal(\bar{K}/K)$ on the coefficients of de Rham cohomology preserves the set of Hodge cycles. Therefore, the Mumford--Tate group, as the stabilizer of all Hodge cycles, is a $K$-subgroup of $\GL(H^1_{\mathrm{dR}}(A/K))$.} We denote by $\rho_{\bar{K}}:G_{{\rm{MT}},\bar{K}}\rightarrow \GL(H^1_{\mathrm{dR}}(A/\bar{K}))$ the representation obtained by base change to $\bar{K}$. Assume that the $G_{{\mathrm{MT}},\bar{K}}$-representation $\rho_{\bar{K}}$ decomposes as $\rho_{\bar{K}}=\bigoplus_{i=1}^n\rho_{\bar{K},i}$ and each component decomposes as $\rho_{\bar{K},i}|_{G^\circ_{\aT,\bar{K}}}=\bigoplus_{j=1}^{n_i}\rho_{\bar{K},i,j}$, where $\rho_{\bar{K},i}$'s (resp. $\rho_{\bar{K},i,j}$'s ) are irreducible representations of $G_{{\mathrm{MT}},\bar{K}}$ (resp. $G_{\aT,\bar{K}}^\circ$). We denote the vector space of $\rho_{\bar{K},i}$ (resp. $\rho_{\bar{K},i,j}$) by $V_i$ (resp. $V_{i,j}$).
\end{para}

\begin{para}\label{dRtoet}
We now explain how to use \cite{K17}*{Cor.~2.3.1} to translate problems on representations of $G^\circ_\aT$ to problems on representations of $G_\ell$.\footnote{In a previous version of this paper, we used a result of Noot \cite{N09}*{Thm.~4.2}, which assumes that $(G\MT(A)_{\bar{\bQ}})^{\der}$ does not have any simple factor of type $\SO_{2k}$ for $k\geq 4$. We thank one of the referees for pointing out to me this result of Kisin, which shows the same result without hypothesis on the type of $G\MT(A)$.} This is the main idea of the proofs of \Cref{nonisom} and \Cref{nomoresub}. We fix an embedding $\bar{K}\rightarrow \bar{\bQ}_\ell$. Since the de Rham and \'etale cohomologies can be viewed as fiber functors of the category of motives with absolute Hodge cycles, we have an isomorphism of representations of $(G\MT)_{\bar{\bQ}_\ell}$:
$$H^1\dR(A/\bar{K})\otimes \bar{\bQ}_\ell\simeq H^1_\et(A_{\bar{K}},\bQ_\ell)\otimes \bar{\bQ}_\ell.$$
Via the above isomorphism, we denote by $V_i^\et$ the image of $V_i\otimes \bar{\bQ}_\ell$. Then by Faltings isogeny theorem, $V_i^\et$ are irreducible representations of $(G_\ell)_{\bar{\bQ}_\ell}$ and any two of them are isomorphic if and only if they are isomorphic as representations of $(G\MT)_{\bar{\bQ}_\ell}$. 

Now we study the action of the Frobenius torus $T_v^\circ$ on both sides. We use $(T_v^\circ)_{\bar{K}_v}$ to denote the base change of the crystalline one acting on the left hand side and use $(T_v^\circ)_{\bar{\bQ}_\ell}$ to denote the base change of the \'etale one acting on the right hand side. By \cite{K17}*{Cor. 2.3.1}, after raising both Frobenius actions to high enough power, $\varphi_v^{m_v}$ is conjugate to $Frob_v$ by an element of $G\MT$.\footnote{Recall that we use $Frob_v$ to denote the relative Frobenius action on the \'etale cohomology. Noot shows that after raising to a high enough power, there exists an element $g\in G\MT(\bQ)$ such that $g$ is conjugate to $\varphi_v^{m_v}$ by some element in $G\MT(\bar{K}_v)$ and that $g$ is conjugate to $Frob_v$ by some element in $G\MT(\bQ_\ell)$.} Then the weights of the action of $T_v^\circ$ on $V_i$ and $V_i^\et$ coincide and the fact that $V_i$ is isomorphic to $V_j$ as representations of $(T_v^\circ)_{\bar{K}_v}$ is equivalent to the fact that $V_i^\et$ is isomorphic to $V_j^\et$ as representations of $(T_v^\circ)_{\bar{\bQ}_\ell}$.
\end{para}

The following lemma reduces comparing the centralizers of $G\MT$ and $G^\circ_\aT$ to studying the irreducibility of $V_i$ as representations of $G^\circ_{\aT,\bar{K}}$.

\begin{lemma}\label{nonisom}
If $V_i$ and $V_j$ are not isomorphic as $G_{\mathrm{MT},\bar{K}}$-representations, then they are not isomorphic as $G^\circ_{\aT,\bar{K}}$-representations. In particular, if all $V_i$ are irreducible representations of $G^\circ_{\aT,\bar{K}}$, then $G^\circ_\aT$ and $G\MT$ have the same centralizer in $\End(H^1\dR(A/\bar{K}))$.
\end{lemma}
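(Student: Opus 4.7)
The plan is to prove the contrapositive of the main assertion, and then derive the centralizer equality from Schur's lemma.

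For the contrapositive, suppose $\phi\colon V_i \to V_j$ is a nonzero $G^\circ_{\mathrm{dR},\bar K}$-equivariant map. Extend $\phi$ by zero on the remaining summands $V_k$, $k\neq i$, to obtain $\tilde\phi \in \End(H^1_{\mathrm{dR}}(A,\bar K))$; since each $V_k$ is $G_{\mathrm{dR}}$-stable, a direct check shows $\tilde\phi$ commutes with $G^\circ_{\mathrm{dR}}$. The next step is to upgrade this to $G_{\mathrm{dR}}$-equivariance. By \Cref{Tvmax}, for $v$ in a density-one set $M_{max}$ the Frobenius torus $T_v$ is connected and therefore contained in $G^\circ_{\mathrm{dR}}$, so the relative Frobenius $\varphi_v^{m_v} \in T_v(K^{\mathrm{dR}}_v)$ fixes $\tilde\phi$. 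Applying the strengthening of Bost's theorem sketched after \Cref{Bost} (and fully established in \cref{density}), one concludes that $\tilde\phi$ lies in $\End_L(A)\otimes \bar K$; such endomorphisms are $G_{\mathrm{dR}}$-equivariant by \Cref{HisdRT}. Schur's lemma applied to the $G_{\mathrm{dR}}$-irreducibles $V_i$ and $V_j$ then produces the desired $G_{\mathrm{dR}}$-isomorphism.

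For the centralizer equality, the hypothesis that each $V_i$ is $G^\circ_{\mathrm{dR},\bar K}$-irreducible together with Schur's lemma expresses both centralizers as products of matrix algebras indexed by the isomorphism classes appearing among the $V_i$ under the respective groups, with multiplicity being matrix size. The first part of the lemma shows these two indexings coincide (the obvious direction that $G_{\mathrm{dR}}$-iso implies $G^\circ_{\mathrm{dR}}$-iso, together with the just-established converse), so the two centralizers are identified under this common product decomposition.

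The main obstacle is the passage from $G^\circ_{\mathrm{dR}}$-equivariance of $\tilde\phi$ to $G_{\mathrm{dR}}$-equivariance. This requires a density-one statement for the absolute crystalline Frobenius $\varphi_v$ lying in $G^\circ_{\mathrm{dR}}(K^{\mathrm{dR}}_v)$, which is automatic when $K = \bQ$ (as $m_v = 1$) but genuinely subtle otherwise; for general $K$ one is naturally led to invoke the conjectural relative Bost theorem (\Cref{relBost}) or to impose the additional structural hypotheses on $A$ underpinning \Cref{thm_main}, tying this abstract group-theoretic reduction to the subsequent arithmetic case analysis.
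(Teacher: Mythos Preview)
Your approach has a genuine gap at exactly the point you flag as the ``main obstacle,'' and this gap is fatal rather than merely deferrable. From $T_v \subset G^\circ_{\mathrm{dR}}$ you only obtain $\varphi_v^{m_v}(\tilde\phi)=\tilde\phi$, whereas \Cref{Bost} and its strengthenings in \cref{density} require $\varphi_v(\tilde\phi)=\tilde\phi$ for the \emph{absolute} Frobenius. Invoking \Cref{relBost} makes the lemma conditional on an open conjecture, and invoking the hypotheses of \Cref{thm_main} is circular: the whole point of \Cref{nonisom} is to serve as an unconditional reduction step valid under the standing assumption of \cref{assumpA} alone, so that the later case analysis need only verify irreducibility of the $V_i$ under $G^\circ_{\mathrm{dR}}$.

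The paper's proof avoids this relative/absolute issue entirely by a different mechanism. Rather than trying to show that a $G^\circ_{\mathrm{dR}}$-intertwiner is a de~Rham--Tate cycle, it compares the $T_v$-weights on $V_i$ and $V_j$ directly. Via the comparison in \cref{dRtoet} (which uses Noot's result that $\varphi_v^{m_v}$ and $Frob_v$ are $G_{\mathrm{MT}}$-conjugate up to a power, and this is where the $\SO_{2k}$ hypothesis enters), the $T_v$-action on $V_i$ matches that on $V_i^{\et}$. On the \'etale side, $V_i^{\et}\not\cong V_j^{\et}$ as $G_\ell$-representations (Faltings), and since $T_v$ is a maximal torus of $G_\ell$ for $v\in M_{max}$, they remain non-isomorphic as $T_v$-representations by character theory. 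Transferring back, $V_i\not\cong V_j$ as $(T_v)_{\bar K_v}$-representations, hence a fortiori as $G^\circ_{\mathrm{dR}}$-representations. No appeal to Bost is needed here at all.
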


\begin{proof}
Let $v\in M_{\max}$ as in \Cref{Tvmax}. Then $T_v$ is connected and as a subgroup of $G_\ell$, it is a maximal torus. Since $V_i$ and $V_j$ are not isomorphic as representations of $G_{\mathrm{MT},\bar{K}}$, then by \ref{dRtoet}, their counterparts $V_i^\et$ and $V_j^\et$ are not isomorphic as representations of $(G_\ell)_{\bQ_\ell}$. Since $T_v$ is a maximal torus, then $V_i^\et$ and $V_j^\et$ are not isomorphic as representations of $(T_v)_{\bar{\bQ}_\ell}$. Then by the discussion in \ref{dRtoet}, $V_i$ and $V_j$ are not isomorphic as representations of $(T_v)_{\bar{K}_v}$. In particular, they are not isomorphic as representations of $G_\aT^\circ$.
\end{proof}

\begin{para}\label{gerb}
Let $v\notin \Sigma$ (defined in \cref{MumfordTate}) be a finite place of $K^{\aT}$ giving rise to an embedding $K^{\aT}\rightarrow K^{\aT}_v$.  
Since $G_\aT(\Res^K_\bQ A)$ is defined as the stabilizer of all absolute Tate cycles of $\Res^K_\bQ A$ and $\varphi_v(s_\alpha)=s_\alpha$ for all absolute Tate cycles $\{s_\alpha\}$, 
the action $\ad(\varphi_v)$, the conjugation by $\varphi_v$, is an isomorphism between $\sigma^*G_{\aT,K^{\aT}_v}(\Res^K_\bQ A)$ and $G_{\aT,K^{\aT}_v}(\Res^K_\bQ A)$, where $\sigma:K^{\aT}_v\rightarrow K^{\aT}_v$ is the Frobenius. The action $\ad(\varphi_v)$ also induces an isomorphism between the neutral connected components of these two groups. By taking the projection from $G_\aT(\Res^K_\bQ A)$ to $G_\aT(A)$, the action $\ad(\varphi_v)$ is an isomorphism between $\sigma^*G_{\aT,K^{\aT}_v}(A)$ and $G_{\aT,K^{\aT}_v}(A)$ and between their neutral connected components.
In terms of $K^{\aT}_v$-points, $\ad(\varphi_v)$ is a $\sigma$-linear automorphism of both $G_{\aT}(A)(K^{\aT}_v)$ and $G_{\aT}^\circ(A)(K^{\aT}_v)$.
Moreover, since all Hodge cycles are absolute Tate and hence fixed by $\varphi_v$, the above discussion also shows that $\ad(\varphi_v)$ defines a $\sigma$-linear automorphism on $G\MT(K_v^\aT)$.
\end{para}

\begin{proposition}\label{decomp}
Assume that $A_{\bar{K}}$ is simple. Then all $\rho_{\bar{K},i,j}$ are of the same dimension. Moreover, if the assumption in \ref{assumpA} holds and either a maximal subfield of $\End^\circ_{\bar{K}}(A)$ is Galois over $\bQ$ or $\End^\circ_{\bar{K}}(A)$ is a field, then there exists a choice of decomposition $\bigoplus_{i=1}^n V_i$ such that $\varphi_v(V_{i,j})=V_{\sigma(i),\tau_{v,i}(j)}$, where $\sigma$ is a permutation of $\{1,\dots, n\}$ and for each $i$, $\tau_{v,i}$ is a permutation of $\{1,\dots,n_i\}$.
\end{proposition}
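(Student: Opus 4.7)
The plan is to establish the dimension equality using Bost's theorem combined with Clifford theory for the normal inclusion $G^\circ_{\mathrm{dR}}\triangleleft G_{\mathrm{dR}}$, and then address the permutation claim by exploiting the groupoid formalism of paragraph \ref{gerb} together with the additional hypotheses on $\End^\circ_{\bar K}(A)$.

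Since $A_{\bar K}$ is simple, Bost's theorem (\Cref{Bost}) identifies the centralizer of $G_{\mathrm{dR},\bar K}$ in $\End(H^1_{\mathrm{dR}}(A,\bar K))$ with $D\otimes_\bQ\bar K$, where $D:=\End^\circ_{\bar K}(A)$ is a division algebra whose center $F$ has degree $e=[F:\bQ]$ and with $[D:F]=d^2$. The primitive idempotents of $F\otimes_\bQ\bar K\cong\bar K^e$ thus furnish a canonical $G_{\mathrm{dR},\bar K}$-isotypic decomposition $H^1_{\mathrm{dR}}(A,\bar K)=\oplus_{\sigma\colon F\hookrightarrow\bar K}V_\sigma$ with each $V_\sigma\cong W_\sigma^{\oplus d}$ for a unique irreducible $W_\sigma$. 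Because the setup is defined over $K$ and $H^1_{\mathrm{dR}}(A,\bar K)=H^1_{\mathrm{dR}}(A,K)\otimes_K\bar K$, the semilinear $\Gal(\bar K/K)$-action on the latter permutes the idempotents while intertwining the $G_{\mathrm{dR}}$-action, forcing the $W_\sigma$ to share a common $\bar K$-dimension. Applied within each $V_\sigma$, Clifford theory for $G^\circ_{\mathrm{dR}}\triangleleft G_{\mathrm{dR}}$ shows that the irreducible $G^\circ_{\mathrm{dR}}$-summands $V_{i,j}\subset V_i$ all share one dimension, and the same semilinear action identifies the refined decompositions across the different $V_i$'s, yielding equality of all $\dim V_{i,j}$.

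For the permutation, the groupoid formalism of paragraph \ref{gerb} provides $\varphi_v\in\fG_v(K^{\mathrm{dR}}_v)$ with $\ad(\varphi_v)$ a $\sigma$-semilinear automorphism of both $G_{\mathrm{dR},K^{\mathrm{dR}}_v}$ and $G^\circ_{\mathrm{dR},K^{\mathrm{dR}}_v}$. Hence $\varphi_v$ carries any irreducible $G_{\mathrm{dR}}$-subrepresentation $W\subset H^1_{\mathrm{dR}}(A,\bar K_v)$ to an irreducible $G_{\mathrm{dR}}$-subrepresentation of the same dimension, and similarly with $G^\circ_{\mathrm{dR}}$ in place of $G_{\mathrm{dR}}$. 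This tautologically yields a permutation of the set of $G_{\mathrm{dR}}$-isotypic components $\{V_i\}$, and, within each $V_i$, a permutation of its $G^\circ_{\mathrm{dR}}$-isotypic subspaces.

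The main obstacle is refining this isotypic-level permutation into a genuine permutation of a chosen decomposition $V_i=\oplus_j V_{i,j}$ into irreducibles. A priori, within a non-multiplicity-free $G^\circ_{\mathrm{dR}}$-isotypic component, the irreducible summands are only determined up to a $\GL$-action on multiplicity spaces, so $\varphi_v$ need not preserve any chosen splitting. The extra hypotheses on $\End^\circ_{\bar K}(A)$ are exactly engineered to break this ambiguity. When $\End^\circ_{\bar K}(A)=F$ is a field, $d=1$, every $V_\sigma=W_\sigma$ is already multiplicity-free over $G_{\mathrm{dR}}$, and one exploits the restriction \ref{assumpA} on the simple factors of $(G_{\mathrm{MT}})^{\mathrm{der}}_{\bar\bQ}$---via Pink's classification and the Frobenius/Frobenius torus bridge of \ref{dRtoet}---to rule out the type-$D_n$ phenomena in which Clifford-conjugate summands of $W_\sigma|_{G^\circ_{\mathrm{dR}}}$ could be non-isomorphic yet indistinguishable from the outside; this yields multiplicity-freeness of $W_\sigma|_{G^\circ_{\mathrm{dR}}}$, so the $V_{i,j}$'s are distinguished by isomorphism class and the permutation is forced. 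When instead a maximal subfield $E\subset D$ is Galois over $\bQ$, one refines each $V_\sigma$ further using the commuting action of $E\otimes\bar K$: its idempotents canonically label $d$ pieces by embeddings $E\hookrightarrow\bar K$, and these labels are permuted by $\varphi_v$ through its induced action on $E\otimes\bar K_v$; combining with Clifford theory again produces a canonically labeled decomposition that $\varphi_v$ must permute. In either regime, the decomposition $\oplus V_{i,j}$ and the permutations $\sigma$ and $\tau_{v,i}$ follow.
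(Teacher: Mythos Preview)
Your argument for the first assertion has a genuine gap. You claim the semilinear $\Gal(\bar K/K)$-action permutes the central idempotents of $F\otimes_\bQ\bar K$ and thereby forces all $W_\sigma$ (and then, via Clifford theory, all $V_{i,j}$) to share a common dimension. But $\Gal(\bar K/K)$ need not act transitively on these idempotents: if for instance $F\subset K$, then $F\otimes_\bQ K$ is already a product of copies of $K$, the idempotents are $K$-rational, and Galois fixes each $V_\sigma$ individually. (It is true that all $W_\sigma$ have the same dimension $2g/(de)$, but this follows from the $D$-module structure of $H^1_B(A,\bQ)$, not from Galois transitivity.) More seriously, even granting equal $\dim W_\sigma$, Clifford theory only tells you that the $G^\circ_{\mathrm{dR}}$-constituents \emph{within a fixed} $V_i$ share a dimension; your appeal to ``the same semilinear action'' to compare across different $V_i$ fails for the same reason.

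The paper closes this gap by a different mechanism that does not require any transitivity. Using the groupoid action (your own second paragraph), $\varphi_v$ sends each $V_{i,j}$ to an irreducible $G^\circ_{\mathrm{dR}}$-subspace, hence $\varphi_v$ preserves the span $V'=\bigoplus_{\dim V_{k,l}=c}V_{k,l}$ for each fixed $c$. The projection $pr'$ onto $V'$ is therefore fixed by $\varphi_v$ for all $v\notin\Sigma$, so $pr'$ is a de Rham--Tate cycle in $\End(H^1_{\mathrm{dR}})$, hence algebraic by Bost, hence $pr'\in\{0,\mathrm{id}\}$ by simplicity of $A_{\bar K}$. This forces all $V_{i,j}$ to have the same dimension. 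You invoked Bost only to compute the centralizer; the paper uses it a second time, on the idempotent $pr'$, and that second use is the missing idea.

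For the second assertion your outline is broadly aligned with the paper's. Two remarks: in the Galois maximal subfield case you still need the multiplicity-freeness of each $V_i|_{G^\circ_{\mathrm{dR}}}$ (the paper's Lemma~\ref{nomoresub}, proved via Frobenius tori and Pink's list of weak Mumford--Tate pairs under assumption~\ref{assumpA}); your sketch invokes it only in the field case. And your phrasing ``non-isomorphic yet indistinguishable from the outside'' is backwards: the obstacle would be \emph{isomorphic} constituents, and the Frobenius-torus weight argument rules that out by showing the weights on $V_i$ are pairwise distinct.
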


\begin{proof}
The idea to prove the first assertion is to apply Bost's theorem after showing that $\phi_v$ preserves the vector space of the direct sum of all $V_{i,j}$ of a fixed dimension.
We fix a finite extension $L$ of $K$ such that all the $V_{i,j}$ are defined over $L$. Let $v\notin \Sigma$ be a place of $L$. 
As discussed in \cref{gerb}, $\ad(\varphi_v)$ preserves the set $G_{\aT}^\circ(L_v)$. Therefore, for any nonzero vector $v_{i,j}\in V_{i,j}$, as $\bQ_p$-linear spaces,
\begin{equation*}
\begin{split}
\varphi_v(V_{i,j})&=\varphi_v(\Span_{L_v}(G_{\aT}^\circ(L_v)(v_{i,j})))=\Span_{L_v}(\varphi_v(G_{\aT}^\circ(L_v)(v_{i,j})))\\
&=\Span_{L_v}(\ad(\varphi_v)(G_{\aT}^\circ(L_v)(\varphi_v(v_{i,j}))))=\Span_{L_v}(G_{\aT}^\circ(L_v)(\varphi_v(v_{i,j}))).
\end{split}
\end{equation*}

In other words, as an $L_v$-vector space, $\varphi_v(V_{i,j})$ is the same as the space of the irreducible $G^\circ_{\aT}(L_v)$-sub representation generated by $\varphi_v(v_{i,j})$. Similarly for $V_i$, we have that the vector space $\varphi_v(V_i)$ is the same as the vector space of an irreducible $G_{\mathrm{MT}}(L_v)$-sub representation. In particular, $\varphi_v(V_{i,j})$ is contained in $\bigoplus_{\dim V_{k,l}=\dim V_{i,j}}V_{k,l}$. Let $V'$ be $\bigoplus_{\dim V_{k,l}=\dim V_{i,j}}V_{k,l}$ and $V''$ be $\bigoplus_{\dim V_{k,l}\neq\dim V_{i,j}}V_{k,l}$. Then $\varphi_v(V')=V'$ and $\varphi_v(V'')=V''$. Let $pr'$ be the projection to $V'$. Then $\varphi_v(pr')=pr'$ for all $v\notin \Sigma$. By \Cref{Bost}, $pr'$ is an algebraic endomorphism of $A$. Since $A$ is simple, $pr'$ cannot be a nontrivial idempotent and then $V''=0$, which is the first assertion.

The second assertion is an immediate consequence of the following two lemmas.
Indeed, by \Cref{nomoresub}, we see that the only irreducible sub representations in $V_s$ of $G^\circ_{\aT}$ are $V_{s,j}$'s. Since $\varphi_v(V_{i,j})$ is the vector space of some sub representation of the $G_\aT^\circ$-representation with underlying vector space $\varphi_v(V_i)=V_s$ for some $s$ by \Cref{Fperm}, then $\varphi_v(V_{i,j})$ is $V_{s,t}$ for some $t$.
\end{proof}

\begin{lemma}\label{Fperm}
Under the assumptions in \Cref{decomp}, there exists a decomposition $H^1_{\mathrm{dR}}(A/L)=\bigoplus_{i=1}^n V_i$ where $V_i$ are irreducible representations of $G_{\mathrm{MT}}$ such that for any $i$, as vector spaces, $\varphi_v(V_i)=V_j$ for some $j$. 
\end{lemma}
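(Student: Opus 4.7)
The plan is to construct the required decomposition via primitive orthogonal idempotents in the centralizer algebra $C := \End_{G_{\mathrm{dR}}}(H^1_{\mathrm{dR}}(A,L))$, which by \Cref{Bost} equals $\End_L(A) \otimes L$. A decomposition $H^1_{\mathrm{dR}}(A,L) = \bigoplus V_i$ into $G_{\mathrm{dR}}$-irreducibles corresponds bijectively to a complete set of primitive orthogonal idempotents $\{e_i\} \subset C$ via $V_i = e_i H^1_{\mathrm{dR}}(A,L)$, and the equality $\varphi_v(V_i) = V_j$ of $L_v$-subspaces of $H^1_{\mathrm{dR}}(A,L_v)$ holds precisely when conjugation by $\varphi_v$ sends $e_i \otimes 1$ to $e_j \otimes 1$ in $C \otimes L_v$.

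The key input is that every element of $\End_L(A)$, being a $1$-de Rham--Tate cycle, commutes with every crystalline Frobenius. A direct computation using $\sigma_v$-semilinearity then shows that conjugation by $\varphi_v$ acts on $C \otimes L_v = \End_L(A) \otimes_\bQ L_v$ as the extension $1 \otimes \sigma_v$ of the identity on $\End_L(A) \otimes \bQ$, where $\sigma_v \in \Gal(L_v/\bQ_p)$ is the Frobenius of $L_v$. In particular, $\ad(\varphi_v)$ preserves $C \otimes L_v$ and acts there as a $\bQ_p$-algebra automorphism. The task therefore reduces to finding primitive orthogonal idempotents in $C$ whose set is stable under $1 \otimes \sigma_v$ for every $v \notin \Sigma$.

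To construct such a set, I would decompose $D := \End_L(A) \otimes \bQ = \prod_k M_{m_k}(D_k')$ with $D_k'$ a central division algebra over its center $F_k$, and $F_k \otimes L = \prod_l F_{k,l}$. After harmlessly enlarging $L$ (by \Cref{rmk_def}) so that each base change $D_k' \otimes_{F_k} F_{k,l}$ splits as a matrix algebra over $F_{k,l}$, a complete set of primitive orthogonal idempotents of $C$ can be built from three kinds of pieces: (a) standard matrix units in $M_{m_k}(\bQ) \subset D$, which are fixed by every $1 \otimes \sigma_v$; (b) the primitive central idempotents of $F_k \otimes L$, which are permuted among themselves by $1 \otimes \sigma_v$ via the Galois action on the factors $F_{k,l}$; and (c) matrix units in each split algebra $D_k' \otimes_{F_k} F_{k,l} \cong M_{r_{k,l}}(F_{k,l})$, chosen Galois-equivariantly across the Galois orbit of $l$. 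The full set of idempotents is then manifestly permuted by each $1 \otimes \sigma_v$, hence by each $\varphi_v$.

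The main obstacle will be step (c): the splitting isomorphisms $D_k' \otimes_{F_k} F_{k,l} \cong M_{r_{k,l}}(F_{k,l})$ depend on choices that a priori need not be Galois equivariant, so a priori the matrix units in different factors of a single Galois orbit could be unrelated. This is resolved by fixing a single splitting per Galois orbit of factors $F_{k,l}$ and transporting it by the Galois action on $L$ to define the splittings at all other factors in the orbit; the resulting matrix units at different $l$'s are then interchanged by Galois, so the full collection is permuted by each $\varphi_v = 1 \otimes \sigma_v$ uniformly in $v$, yielding the desired decomposition.
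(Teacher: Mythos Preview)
Your reduction is exactly right: via \Cref{Bost} the centralizer is $C = D \otimes_\bQ L$ with $D = \End^\circ_L(A)$, and the computation that $\ad(\varphi_v)$ acts on $C \otimes_L L_v = D \otimes_\bQ L_v$ as $1_D \otimes \sigma_v$ is the key observation. So the problem does reduce to producing a complete set of primitive orthogonal idempotents in $C$ that is stable under the $\Gal(L/\bQ)$-action on the second tensor factor (after enlarging $L$ to be Galois over $\bQ$).

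However, step (c) has a gap. When you transport a chosen splitting at a base factor $l_0$ to another factor $l$ in the same orbit by some $\gamma$ with $\gamma\cdot l_0 = l$, the result depends on $\gamma$: two such choices differ by an element $h$ of the stabilizer $H = \Gal(L/\sigma(F_k))$ of $l_0$, and $H$ acts nontrivially on $D_k' \otimes_{F_k,\sigma} L \cong M_{d_k}(L)$ (its fixed subalgebra is exactly $D_k'$, a division algebra with no nontrivial idempotents). So the transported matrix units are not well-defined; and even if you fix coset representatives $\gamma_l$, a general $\gamma \in \Gal(L/\bQ)$ need not carry $e_i^{(l)}$ to any $e_j^{(l')}$, since $\gamma\gamma_l$ and $\gamma_{\gamma(l)}$ differ by an element of $H$ that moves the idempotents. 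The fix is to choose the diagonal idempotents at $l_0$ inside $E_k \otimes_{F_k,\sigma} L$ for a maximal subfield $E_k \subset D_k'$: this subalgebra is $H$-stable because $E_k$ is defined over $F_k$, so $H$ permutes its primitive idempotents, and then the full set of idempotents (which is simply the primitive idempotents of $E_k \otimes_\bQ L$) is $\Gal(L/\bQ)$-stable. With this correction your argument goes through and in fact does not need the paper's extra hypothesis.

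The paper takes a different route. When $D$ is a field the decomposition is unique and there is nothing to choose. In the remaining case the paper uses the standing hypothesis that a maximal subfield $E$ of $D$ is Galois over $\bQ$, works in $D \otimes_\bQ E \cong M_d(E)^{[F:\bQ]}$, picks one matrix idempotent $e_i^j$ with nonzero $\Gal(E/\bQ)$-trace, and takes the $V_\tau$ to be the images of its Galois conjugates $pr_\tau = \tau(e_i^j)$. These conjugates are \emph{not} orthogonal in general; instead the paper shows $\sum_\tau pr_\tau \in D$ is nonzero, hence invertible in the division algebra $D$, so the $V_\tau$ span, and a count of irreducible summands forces the sum to be direct. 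Your approach (once repaired) is cleaner and avoids both the Galois hypothesis on $E$ and the separate spanning argument; the paper's is more ad hoc but makes the permutation $\varphi_v(V_\tau) = V_{\sigma\tau}$ completely explicit.
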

\begin{proof}
When $\End^\circ(A)$ is a field, the decomposition is unique and any two distinct $V_j$'s are not isomorphic.
Since the vector space $\varphi_v(V_i)$ is the vector space of an irreducible sub representation, it must be $V_j$ for some $j$.

Now we assume that the maximal subfield of $\End^\circ_{\bar{K}}(A)$ is Galois over $\bQ$. Let $\{s_\alpha\}$ be a $\bQ$-basis of algebraic cycles in $\End(H^1_{\mathrm{dR}}(A/K^{\aT}))$ 
and we use $s_\alpha^B$ to denote their images in $\End(H^1_{\mathrm{B}}(A_\bC,\bQ))$. Since $A$ is simple, $\End^\circ_{\bar{K}}(A)$ is a division algebra $D$ of index $d$ over some field $F$ and $\{s^B_\alpha\}$ is a basis of $D$ as a $\bQ$-vector space. Let $E\subset D$ be a field of degree $d$ over $F$. Then $E$ is a maximal subfield of $D$ and $D\otimes_FE\cong M_d(E)$. Therefore, $$D\otimes_{\bQ}E\cong D\otimes_{\bQ}F\otimes_F E\cong D\otimes_F E\otimes_{\bQ}F\cong M_d(E)^{[F:\bQ]}.$$ Let $e_i\in M_d(E)$ be the projection to the $i$-th coordinate. Let $e_i^j\in D\otimes_{\bQ}E$ be the element whose image in $M_d(E)^{[F:\bQ]}$ is $(0,..,0,e_i,0,...,0),$ where $e_i$ is on the $j$-th component. Since $\sum_{i,j=1}^n e_i^j$ is the identity element in $D$, there must exist at least one $e_i^j$ such that $\sum_{\tau\in \Gal(E/\bQ)} \sigma(e_i^j)$ is nonzero, where the Galois action on $D\otimes_\bQ E$ is the one induced by the natural action on $E$.

We write $e_i^j=\sum k_\alpha s_\alpha^B$, where $k_\alpha\in E$, and let $pr_\tau=\sum \tau(k_\alpha) s_\alpha \in \End(H^1_{\mathrm{dR}}(A/\bar{K})),$ for all $\tau\in \Gal(E/\bQ)$. Since $e_i^j$ is an idempotent, so is $pr_\tau$. Let $V_\tau$ be the image of $pr_\tau$. We may assume that $L$ contains $E$ and still use $\sigma$ to denote the image of the Frobenius via the map $\Gal(L_v/\bQ_p)\subset \Gal(L/\bQ)\rightarrow \Gal(E/\bQ)$. Then by definition, $\varphi_v(pr_\tau)=pr_{\sigma\tau}$ and in other words, as vector spaces, $\varphi_v(V_\tau)=V_{\sigma\tau}$. 

Now it remains to prove that $\sum_{\tau\in \Gal(E/\bQ)} V_\tau$ is a direct sum and $\bigoplus_{\tau\in \Gal(E/\bQ)} V_\tau=H^1_{\mathrm{dR}}(A/\bar{K})$ as representations of $G\MT$. First, since $pr_\tau$ lies in the centralizer of $G\MT$, every $V_\tau$ is a subrepresentation. Second, since the number of irreducible representations in a decomposition of $H^1_{\mathrm{dR}}(A/\bar{K})$ equals $[E:\bQ]$, it suffices to prove that $\sum_{\tau\in \Gal(E/\bQ)} V_\tau=H^1_{\mathrm{dR}}(A/\bar{K})$. Since the image of the map $\sum_{\tau\in \Gal(E/\bQ)} pr_\tau$ is contained in $\sum_{\tau\in \Gal(E/\bQ)} V_\tau$, it suffices to prove that $\sum_{\tau\in \Gal(E/\bQ)} pr_\tau$ is invertible. By construction, the map $\sum_{\tau\in \Gal(E/\bQ)} pr_\tau$ is $\Gal(E/\bQ)$-equivalent and hence lies in $D$ (via comparison). Moreover, $\sum_{\tau\in \Gal(E/\bQ)} pr_\tau$ is nonzero by the choice of $e_i^j$ and hence this map is invertible since $D$ is a division algebra.
\end{proof}

\begin{lemma}\label{nomoresub}
The $G^\circ_{\aT}$-representations $V_{i,j}$ and $V_{i,j'}$ are not isomorphic if $j\neq j'$.
\end{lemma}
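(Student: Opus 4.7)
Proof plan: I argue by contradiction. Suppose there exist $j \neq j'$ with $V_{i,j} \simeq V_{i,j'}$ as $G^\circ_{\mathrm{dR},\bar K}$-representations. Fixing a $G^\circ_{\mathrm{dR}}$-equivariant isomorphism $\phi : V_{i,j} \xrightarrow{\sim} V_{i,j'}$ and extending by zero on the complementary $G^\circ_{\mathrm{dR}}$-summands produces an element $\tilde\phi \in \End(H^1_{\mathrm{dR}}(A,\bar K))$ that commutes with $G^\circ_{\mathrm{dR}}$. The restriction $\tilde\phi|_{V_i}$ is nonzero and nilpotent; since $V_i$ is irreducible as a $G_{\mathrm{dR},\bar K}$-representation, Schur's lemma identifies $\End_{G_{\mathrm{dR}}}(V_i)$ with $\bar K\cdot\mathrm{id}$, so the non-scalar $\tilde\phi|_{V_i}$ forces $\tilde\phi \notin \End_{G_{\mathrm{dR}}}(H^1_{\mathrm{dR}}(A,\bar K))$.

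The contradiction will come from the opposite containment: the goal is to show $\tilde\phi$ in fact lies in $\End_{G_{\mathrm{dR}}}(H^1_{\mathrm{dR}}(A,\bar K))$. By \Cref{Bost} this centralizer coincides with $\End^\circ_{\bar K}(A)\otimes_{\bQ}\bar K$, i.e.\ the space of algebraic cycles. The tool for this step is the strengthening of Bost's theorem sketched at the end of \cref{sub_cent} (and developed fully in \cref{density}): any $s\in\End(H^1_{\mathrm{dR}}(A,L))$ fixed by the crystalline Frobenius $\varphi_v$ for every $v\mid p$, with $p$ ranging over a set of rational primes of natural density one, is algebraic. So the task reduces to exhibiting such a set for $\tilde\phi$.

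The existence of the set is a Chebotarev-type argument built on \cref{gerb}: for every $v\notin\Sigma$, $\ad(\varphi_v)$ is a $\sigma$-linear automorphism preserving $G^\circ_{\mathrm{dR}}(L_v)$, and its image in the finite \'etale quotient $G_{\mathrm{dR}}/G^\circ_{\mathrm{dR}}$ is Chebotarev-distributed. After enlarging $L$ to trivialize this finite quotient, I expect a density one set of rational primes $p$ for which $\varphi_v\in G^\circ_{\mathrm{dR}}(L_v)$ for every $v\mid p$; such $\varphi_v$ automatically commute with $\tilde\phi$ because $\tilde\phi$ centralizes all of $G^\circ_{\mathrm{dR}}(L_v)$. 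Strengthened Bost then yields the algebraicity of $\tilde\phi$, contradicting the Schur-lemma conclusion from the first paragraph.

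The main obstacle I anticipate is the Chebotarev step itself: a naive count produces density $1/[G_{\mathrm{dR}}:G^\circ_{\mathrm{dR}}]$ rather than density one, and what is really needed is density one of rational primes with uniform good behavior at every $v\mid p$. The weight conventions in the definition of $\beta$-de Rham--Tate cycles, together with the Gasbarri--Herblot refinement of Bost developed in \cref{density}, are precisely what make this packaging feasible.
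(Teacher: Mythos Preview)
Your approach has a genuine gap at the Chebotarev step, and it is not repairable with the tools of \cref{density}. The hope that ``enlarging $L$ trivializes the finite quotient $G_{\mathrm{dR}}/G^\circ_{\mathrm{dR}}$'' is exactly the open problem highlighted in the introduction: unlike $G_\ell$, there is no known field extension after which $G_{\mathrm{dR}}$ becomes connected. Without this, the set of $v$ with $\varphi_v\in G^\circ_{\mathrm{dR}}(L_v)$ has no reason to have density $1$; Serre's \Cref{Tvmax} only tells you that $T_v$ is connected for a density-one set of places of $K^\et$, which for $K\neq\bQ$ says nothing about the absolute Frobenii at non-split primes. The strengthened Bost result in \Cref{Bost} requires density exactly $1$, and \Cref{3/4} still requires $\beta>3/4$, which your argument does not establish. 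Note also that \Cref{nomoresub} is stated (see the remark following it) under only the hypothesis of \ref{assumpA}, with no restriction on $K$ or on $[G_{\mathrm{dR}}:G^\circ_{\mathrm{dR}}]$; any argument via centralizers and Bost would have to work in that generality, which is precisely what the rest of \cref{connected} struggles to achieve case by case.

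The paper's proof avoids centralizers and density arguments entirely. It fixes a single $w$ with $T_w$ of maximal rank; since $T_w$ is then connected it lies in $G^\circ_{\mathrm{dR}}$, so it suffices to show the $T_w$-weights on $V_i$ are pairwise distinct. Via the comparison in \ref{dRtoet} this becomes a statement about the weights of a maximal torus of $G_\ell$ on $V_i^\et$. Pink's theorem that $(G_\ell,\rho_\ell)$ is a weak Mumford--Tate pair over $\bar\bQ_\ell$, together with an inspection of the classification in \cite{Pink}*{Table 4.2}, shows that each simple factor has multiplicity-free weights, which finishes the argument. No Frobenius beyond a single well-chosen one is needed.
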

\begin{proof}
Let $T_w$ be a Frobenius torus of maximal rank for some finite place $w\notin \Sigma$. We also assume that $G\MT$, as a $\bQ$-group, is unramified at $p_w$. We only need to show that the weights of $T_w$ acting on $V_i$ are all distinct. By \ref{dRtoet}, we view $T_w$ as a maximal torus of $G_\ell(A)$ acting on irreducible sub representations of $(G_\ell)_{\bar{\bQ}_\ell}$ in $H^1_\et(A_{\bar{K}},\bQ_\ell)\otimes \bar{\bQ}_\ell$. \cite{Pink}*{Thm. 5.10} shows that $(G_\ell(A),\rho_\ell)$ is a weak Mumford--Tate pair over $\overline{\bQ}_\ell$. To show the weights on $V_i$ are distinct, it suffices to show that the weights of the maximal torus of any given geometrical irreducible component of $(G_\ell(A), \rho_\ell)$ are distinct. Furthermore, it reduces to the case of an almost simple component of each irreducible component. They are still weak Mumford--Tate pairs by \cite{Pink}*{4.1}. One checks the list of simple weak Mumford--Tate pairs in \cite{Pink}*{Table 4.2} to see that all the weights are distinct.
\end{proof}

\subsection{The statement of the theorem and its proof}\label{sub_pf}
\begin{para}\label{irryes}
Since the Mumford--Tate conjecture holds for the abelian varieties that we study in this subsection and especially in \Cref{thm_K},\footnote{The Mumford--Tate conjecture for absolutely simple abelian varieties is well-studied and we will cite the results that we need for each case in this subsection. The reduction of the conjecture for the product of simple abelian varieties to the conjecture for its simple factors is essentially contained in \cite{Lom}*{Sec. 4} and we record a proof at the end of this subsection.} we focus on using results in \cref{sub_gpth} to compare the centralizers of $G\MT$ and $G^\circ_\aT$ in $\End(H^1\dR(A/\bar{K}))$. 
Once we prove that the centralizers of both groups are the same, we conclude the proof of \Cref{thm_K} by \Cref{=rk} and \Cref{keylem} as in the proof of \Cref{Qcase}.
\end{para}
We separate the cases using Albert's classification.
\subsubsection*{Type I}
Let $F$ be the totally real field $\End^\circ_{\bar{K}}(A)$ of degree $e$ over $\bQ$. \cite{BGK1} shows that \Cref{conjMT} holds when $g/e$ is odd. 
\begin{proposition}\label{e=g}
If $e=g$, then \Cref{conj_main} holds for $A$.
\end{proposition}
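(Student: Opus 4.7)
The plan is to show that $G_{\mathrm{dR}}^\circ$ and $G_{\mathrm{dR}}$ share the same centralizer in $\End(H^1_{\mathrm{dR}}(A,\bar K))$; combined with the Mumford--Tate conjecture for $A$ (known in the $g=e$ Type I case by the work of Banaszak--Gajda--Kraso\'n), \Cref{=rk}, and \Cref{keylem}, this will force $G_{\mathrm{dR}}^\circ=G_{\mathrm{MT},\bar K}$ and yield \Cref{conj_main}, exactly as in \ref{irryes}.

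Since $\End^\circ_{\bar K}(A)=F$ is a field, $A_{\bar K}$ is absolutely simple. Moreover, in the Type I $g=e$ case the derived Mumford--Tate group has only type-$A_1$ factors after base change to $\bar\bQ$, so the hypothesis in \ref{assumpA} is satisfied and the results of \cref{sub_gpth} apply. The $F$-action commutes with $G_{\mathrm{dR}}$ and yields the eigenspace decomposition
\[
H^1_{\mathrm{dR}}(A,\bar K)=\bigoplus_{\sigma\colon F\hookrightarrow \bar K}V_\sigma,\qquad \dim_{\bar K}V_\sigma=2.
\]
By \Cref{Bost}, the centralizer of $G_{\mathrm{dR}}$ in $\End(H^1_{\mathrm{dR}}(A,\bar K))$ equals $F\otimes_{\bQ}\bar K$; being commutative of dimension $g$, this forces each $V_\sigma$ to be an irreducible $G_{\mathrm{dR}}$-constituent, and the $V_\sigma$ to be pairwise non-isomorphic as $G_{\mathrm{dR}}$-representations.

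I would then invoke the first (unconditional) assertion of \Cref{decomp}: all irreducible $G_{\mathrm{dR}}^\circ$-subrepresentations of $H^1_{\mathrm{dR}}(A,\bar K)$ share a common dimension $d\in\{1,2\}$. If $d=2$, each $V_\sigma$ remains $G_{\mathrm{dR}}^\circ$-irreducible, and by \Cref{nonisom} the $V_\sigma$ remain pairwise non-isomorphic for $G_{\mathrm{dR}}^\circ$; hence the centralizer of $G_{\mathrm{dR}}^\circ$ is also $\prod_\sigma\bar K=F\otimes_{\bQ}\bar K$, matching that of $G_{\mathrm{dR}}$. If instead $d=1$, then $G_{\mathrm{dR}}^\circ$ acts diagonalizably on every two-dimensional $V_\sigma$; since $\bigoplus V_\sigma$ is a faithful representation, $G_{\mathrm{dR}}^\circ$ must be a torus, and \Cref{cm=torus} then forces $A$ to have complex multiplication---impossible, since the totally real field $F$ of degree $g$ contains no CM subfield of degree $2g$.

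The hard part I expect is ruling out the case $d=1$; this relies on combining the uniform-dimension assertion of \Cref{decomp} with the torus-means-CM characterization of \Cref{cm=torus}, and uses essentially that Type I with $e=g$ is ``as far from CM as possible.'' Once $d=2$ is established, the centralizers of $G_{\mathrm{dR}}^\circ$ and $G_{\mathrm{dR}}$ coincide; the Mumford--Tate conjecture together with \Cref{=rk} then implies that $G_{\mathrm{dR}}^\circ$ and $G_{\mathrm{MT},\bar K}$ have equal rank, and \Cref{keylem} applied to $G_{\mathrm{dR}}^\circ\subset G_{\mathrm{MT},\bar K}$ produces the equality $G_{\mathrm{dR}}=G_{\mathrm{MT},\bar K}$, proving \Cref{conj_main} for $A$.
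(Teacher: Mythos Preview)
Your proposal is correct and follows essentially the same approach as the paper's proof: decompose $H^1_{\mathrm{dR}}(A,\bar K)$ into $g$ pairwise non-isomorphic two-dimensional irreducible $G_{\mathrm{dR}}$-constituents, invoke the equal-dimension clause of \Cref{decomp} to force $d\in\{1,2\}$, and rule out $d=1$ via \Cref{cm=torus}. The only cosmetic difference is that the paper obtains the decomposition on the Betti side via $G_{\mathrm{MT}}$ and transfers it through \ref{dRtoet}, whereas you work directly on the de Rham side using the $F$-eigenspace decomposition and \Cref{Bost}; the logical content is the same.
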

\begin{proof}
The action of $F$ on the $\bQ$-vector space $H^1_{\mathrm{B}}(A,\bQ)$ makes it a $2$-dimensional $F$-vector space. Therefore, as a $(G\MT)_{\bar{\bQ}}$-representation, $H^1_{\mathrm{B}}(A,\overline{\bQ})$ decomposes into $g$ non-isomorphic irreducible sub representations of dimension $2$. Hence, the $G_{\mathrm{MT},\bar{K}}$-representation $H^1_{\mathrm{dR}}(A/\bar{K})$ decomposes into $g$ non-isomorphic irreducible sub representations $V_1,\dots, V_g$. By \Cref{nonisom} and the discussion in \ref{irryes}, we only need to show that all $V_i$ are irreducible $G^\circ_{\aT}$-representations. By \Cref{decomp}, if any $G^\circ_{\aT}$-representation $V_i$ is reducible, then all $V_1,\dots, V_g$ are reducible. In such situation, all $V_i$ decompose into $1$-dimensional representations and hence $G^\circ_{\aT}$ is a torus. Then by \Cref{cm=torus}, A has complex multiplication, which contradicts our assumption. 
\end{proof}
\begin{rem}\label{pdim}
The above proof is still valid if all (equivalently, any) $V_i$ are of prime dimension.
\end{rem}

Now we focus on the case when $\End_{\bar{K}}(A)=\bZ$. We refer the reader to \cite{Pink} for the study of \Cref{conjMT} in this case. In particular, \Cref{conjMT} holds when $2g$ is not of the form $a^{2b+1}$ or $\binom{4b+2}{2b+1}$, where $a,b\in \bN\backslash \{0\}$ and in this situation, $G_\ell(A)=\GSp_{2g,\bQ_\ell}$.
\begin{proposition}\label{EndZ}
Assume that $G_\ell(A)=\GSp_{2g,\bQ_\ell}$. If $A$ is defined over a number field $K$ which is Galois over $\bQ$ of degree $d$ prime to $g!$, then \Cref{conj_main} holds for $A$.
\end{proposition}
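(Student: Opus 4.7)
The plan is to reduce, as in the proof of \Cref{Qcase}, to showing that the centralizers of $G\dR$ and $G^\circ\dR$ in $\End(H^1\dR(A,\bar K))$ coincide; combined with the rank equality \Cref{=rk} and Zarhin's lemma \Cref{keylem}, this will yield $G^\circ\dR = G\MT$, hence $G\dR = G\MT$, and the proposition follows. The Mumford--Tate conjecture is automatic under our hypothesis: the inclusion $G\MT \otimes \bQ_\ell \supset G^\circ_\ell = \GSp_{2g,\bQ_\ell}$ combined with $G\MT \subset \GSp_{2g,\bQ}$ (from the polarization) forces equality. Since $\End_{\bar K}(A) = \bZ$, \Cref{Bost} identifies the centralizer of $G\dR$ with $\bar K$, so $V_1 := H^1\dR(A,\bar K)$ is already irreducible as a $G\dR$-representation. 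It remains to show it is irreducible as a $G^\circ\dR$-representation; equivalently, that $n = 1$ in the decomposition $V_1 = \oplus_{j=1}^n V_{1,j}$ of \ref{assumpA}.

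By \Cref{decomp} (applicable since $\End^\circ_{\bar K}(A) = \bQ$ is a field and $G\MT^{\der} = \Sp_{2g}$ is of type $C$, so the assumption in \ref{assumpA} is satisfied), the $V_{1,j}$ have common dimension $2g/n$, and each crystalline Frobenius $\varphi_v$ induces a permutation $\sigma_v$ of $\{V_{1,1},\dots,V_{1,n}\}$. The decisive input is \Cref{TvQ}, whose hypothesis $G^\circ_\ell = \GSp_{2g,\bQ_\ell}$ we have; it produces a density-one set $M$ of rational primes such that for every $v \mid p$ with $p \in M$, the Frobenius torus $T_v$ is connected and of maximal rank, and in particular $\varphi_v^{m_v} \in T_v(K_v) \subset G^\circ\dR(K_v)$. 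Thus $\varphi_v^{m_v}$ stabilizes each $V_{1,j}$, so $\sigma_v^{m_v} = \mathrm{id}$, and the order of $\sigma_v$ divides $m_v = [K_v:\bQ_p]$, which divides $d = [K:\bQ]$.

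Since $n \mid 2g$, either $n \leq g$ or $n = 2g$. In the first case the order of $\sigma_v$ also divides $n! \mid g!$; combined with $\gcd(d, g!) = 1$ this forces $\sigma_v = \mathrm{id}$ for all such $v$. Consequently every projection $pr_j : V_1 \to V_{1,j}$ is fixed by $\varphi_v$ for $v$ in a density-one set, i.e.\ is a $1$-de Rham--Tate cycle; \Cref{Bost} then places $pr_j$ in the image of $\End^\circ_{\bar K}(A) = \bQ$, and being an idempotent scalar it equals $0$ or $1$, forcing $n = 1$. In the remaining case $n = 2g$, each $V_{1,j}$ is one-dimensional, so $G^\circ\dR$ acts on $V_1$ through a torus, hence is itself a torus; by \Cref{cm=torus} this forces $A$ to have complex multiplication, contradicting $\End_{\bar K}(A) = \bZ$.

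The main obstacle, absent in the $K = \bQ$ case of \Cref{Qcase}, is that $\varphi_v$ itself need not lie in $G^\circ\dR(K_v)$---only the relative Frobenius $\varphi_v^{m_v}$ does---so $\varphi_v$ can in principle permute the $V_{1,j}$ nontrivially. This is precisely why the refined \Cref{TvQ} (in place of the classical \Cref{Tvmax}) is indispensable, and why the coprimality condition $\gcd([K:\bQ], g!) = 1$ must be imposed: it is calibrated so that the two constraints on $\sigma_v$---that its order divide both $d$ and $n!$---are incompatible unless $\sigma_v$ is trivial, at which point Bost's theorem closes the argument.
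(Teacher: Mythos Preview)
Your proof is correct and follows essentially the same route as the paper's own argument: reduce via \Cref{=rk} and \Cref{keylem} to the irreducibility of $H^1\dR(A,\bar K)$ as a $G^\circ\dR$-representation, invoke \Cref{decomp} to get equal-dimensional pieces permuted by $\varphi_v$, rule out the one-dimensional case via \Cref{cm=torus}, and then use \Cref{TvQ} together with the coprimality of $[K:\bQ]$ and $g!$ to force the permutation to be trivial, so that the projectors become $1$-de Rham--Tate and \Cref{Bost} yields the contradiction. Your write-up is in fact somewhat more explicit than the paper's (you justify $\End_{\bar K}(A)=\bZ$, the validity of the hypothesis in \ref{assumpA}, and the Mumford--Tate conjecture from $G_\ell=\GSp_{2g}$), but the underlying argument is identical.
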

\begin{proof}
\Cref{conjMT} holds when $G_\ell(A)=\GSp_{2g}$. By \cref{irryes}, it suffices to show that $H^1_{\mathrm{dR}}(A/\bar{K})$ is an irreducible $G^\circ_{\aT,\bar{K}}$-representation. If not, then by \Cref{decomp}, $H^1_{\mathrm{dR}}(A/\bar{K})$ would decompose into $r$ sub representations of dimension $2g/r$ for some positive divisor $r$ of $2g$. By \Cref{cm=torus}, $r$ cannot be $2g$ and hence $r\leq g$. Let $pr^j$ be the projection to the $j$th irreducible component. By \Cref{decomp}, we have $\varphi_v(pr^j)=pr^k$ for some $k$ and the action of $\varphi_v$ on all $pr^j$ gives rise to an element $s_v$ in $S_r$, the permutation group on $r$ elements. On the other hand, by \Cref{TvQ}, there exists a set $M$ of rational primes of natural density $1$ such that for any $p\in M$ and any $v|p$, we have that $\varphi_v^{m_v}\in G^\circ_{\aT}$. Hence $s_v^{m_v}$ is the identity in $S_r$ for such $v$. By the assumption, $m_v$ is prime to $r!$ and hence $s_v$ is trivial in $S_r$. In other words, $pr^j$ is a $1$-absolute-Tate cycle. Then by \Cref{Bost}, $pr^j$ is algebraic, which contradicts with the fact that $A$ is simple.
\end{proof}

\subsubsection*{Type II and Type III} In this case, $\End^\circ_{\bar{K}}(A)$ is a quaternion algebra $D$ over a totally real field $F$ of degree $e$ over $\bQ$. \cite{BGK1} and \cite{BGK2} show that if $g/(2e)$ is odd and not in the set $\{\frac 12\binom{2^{m+2}}{2^{m+1}},m\in \bN\}$, then \Cref{conjMT} holds.\footnote{Victoria Cantoral-Farf\'an pointed out to me that for abelian varieties of type III, the proof in \cite{BGK2} is valid when $g/(2e)$ is odd and not in the set $\{\frac 12\binom{2^{m+2}}{2^{m+1}},m\in \bN\}$. See \cite{victoria}*{Thm.~5.2.1} for more details.}

\begin{proposition}\label{g=2e}
If $g=2e$, then \Cref{conj_main} holds for $A$.
\end{proposition}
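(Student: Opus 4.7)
The plan is to mirror the argument of Proposition \ref{e=g} as closely as possible. Since the Mumford--Tate conjecture for $A$ is known in this case by \cite{BGK1, BGK2} (as $g/(2e)=1$ is odd), the reduction recorded in \ref{irryes} (combining \Cref{Bost}, \Cref{=rk}, and \Cref{keylem}) shows that it suffices to prove that the centralizer of $G^\circ_{\mathrm{dR}}$ in $\End(H^1_{\mathrm{dR}}(A,\bar{K}))$ coincides with that of $G_{\mathrm{dR}}$, which by \Cref{Bost} equals $D\otimes_{\mathbb{Q}}\bar{K}\cong M_2(\bar{K})^e$.

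First I would describe the canonical decomposition of the cohomology. The equality $\dim_{\mathbb{Q}}H^1_{\mathrm{B}}(A,\mathbb{Q})=2g=4e=\dim_{\mathbb{Q}}D$ forces $H^1_{\mathrm{B}}(A,\mathbb{Q})$ to be a free $D$-module of rank one. Since the action of $D\otimes_{\mathbb{Q}}\bar{\mathbb{Q}}\cong M_2(\bar{\mathbb{Q}})^e$ commutes with $G_{\mathrm{MT},\bar{\mathbb{Q}}}$, the Betti cohomology decomposes as a $G_{\mathrm{MT},\bar{\mathbb{Q}}}$-representation into $\bigoplus_{i=1}^{e}W_i^{\oplus 2}$, where the $W_i$ are pairwise non-isomorphic irreducible two-dimensional representations. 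Transferring this via \ref{dRtoet}, I obtain a decomposition $H^1_{\mathrm{dR}}(A,\bar{K})\cong \bigoplus_{i=1}^{e}V_i^{\oplus 2}$ as $G_{\mathrm{dR},\bar{K}}$-representation, with pairwise non-isomorphic irreducible two-dimensional pieces $V_i$. By \Cref{nonisom}, if each $V_i$ remains irreducible as a $G^\circ_{\mathrm{dR},\bar{K}}$-representation, then the $V_i$ remain pairwise non-isomorphic after restriction, the isotypic multiplicities do not change, and the centralizer of $G^\circ_{\mathrm{dR}}$ equals that of $G_{\mathrm{dR}}$, completing the proof.

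It therefore remains to rule out that some $V_i$ splits under $G^\circ_{\mathrm{dR},\bar{K}}$. Suppose it does. Because $A_{\bar{K}}$ is simple, the first assertion of \Cref{decomp} (which requires only simplicity and not the Galois hypothesis on a maximal subfield of $D$) shows that every irreducible $G^\circ_{\mathrm{dR},\bar{K}}$-subrepresentation of $H^1_{\mathrm{dR}}(A,\bar{K})$ has the same dimension. Since $\dim V_i=2$, this common dimension must be $1$, so $G^\circ_{\mathrm{dR}}$ acts through a torus on the whole of $H^1_{\mathrm{dR}}(A,\bar{K})$. By \Cref{cm=torus}, $A$ would then have complex multiplication, which is impossible for type II or III because the maximal commutative subfields of $D$ have $\mathbb{Q}$-dimension $2e=g<2g$. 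I expect this last step to be the only delicate point, since the multiplicity-two isotypic structure (in contrast to the multiplicity-free situation of \Cref{e=g}) makes it important to invoke the dimension part of \Cref{decomp} rather than the more refined permutation statement whose Galois hypothesis is not automatic here.
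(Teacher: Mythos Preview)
Your proof is correct and follows essentially the same approach as the paper: the paper's proof simply records the decomposition $H^1_{\mathrm{dR}}(A,\bar{K})=V_1\oplus\cdots\oplus V_g$ with two-dimensional $V_i$ (isomorphic in pairs) and then appeals to \Cref{pdim}, which is precisely the argument you spell out. Your observation that only the first assertion of \Cref{decomp} is needed (and hence the Galois hypothesis on a maximal subfield of $D$ is irrelevant) is accurate and is implicit in the paper's invocation of \Cref{pdim}.
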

\begin{proof}
The $G_{\mathrm{MT},\bar{K}}$-representation $H^1_{\mathrm{dR}}(A/\bar{K})$ decomposes into $V_1\oplus\cdots\oplus V_g$, where $V_i$ is $2$-dimensional and $V_i$ is not isomorphic to $V_j$ unless $\{i,j\}=\{2k-1,2k\}$. Then we conclude by \Cref{pdim}.
\end{proof}

\subsubsection*{Type IV} In this case, $\End^\circ_{\bar{K}}(A)$ is a division algebra $D$ over a CM field $F$. Let $[D:F]=d^2$ and $[F:\bQ]=e$. Then $ed^2|2g$. 

\begin{proposition}\label{IVcent}
If $\frac{2g}{ed}$ is a prime, then the centralizer of $G^\circ_{\aT}$ in $\End_{\bar{K}}(H^1_{\mathrm{dR}}(A/\bar{K}))$ is the same as that of $G\MT$.
\end{proposition}
\begin{proof}
We view $H^1_{\mathrm{B}}(A,\bQ)$ as a $F$-vector space and hence view $G\MT$ as a subgroup of $\GL_{2g/e}$. Since the centralizer of $G\MT$ is $D$, then $H^1_{\mathrm{B}}(A,\bQ)\otimes_F \bar{F}$ decomposes into $d$ representations of dimension $2g/(ed)$. Hence $H^1_{\mathrm{B}}(A,\bar{\bQ})$ as a $G\MT$-representation decomposes into $de$ representations of dimension $2g/(de)$. Then we conclude by \Cref{pdim}.
\end{proof}

\begin{corollary}\label{IV}
If $g$ is a prime, \Cref{conj_main} holds for $A$ of type IV.
\end{corollary}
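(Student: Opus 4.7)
The plan is a short case analysis on the invariants $(e,d)$ of the endomorphism algebra $D$, reducing the statement to \Cref{IVcent}. First, if $A$ has complex multiplication, then \Cref{conj_main} is already known by \Cref{knowncase}(1), so I may assume $A$ is not CM. In the type IV setting, $A$ is CM if and only if $D$ contains a maximal commutative subfield of dimension $2g$ over $\bQ$, i.e.\ if and only if $ed = 2g$.

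Next, I use the constraint $ed^2 \mid 2g$ together with the fact that $F$ is a CM field, which forces $e = [F:\bQ]$ to be even and at least $2$. For $g$ prime, the positive divisors of $2g$ are contained in $\{1,2,g,2g\}$ (collapsing to $\{1,2,4\}$ when $g=2$). Running through the possibilities with $e$ even, the only candidates are $(e,d)=(2,1)$ and values for which $ed=2g$; the latter is exactly the CM case, already excluded. Hence $(e,d)=(2,1)$, so $F$ is imaginary quadratic and $D=F$.

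With $(e,d)=(2,1)$ and $g$ prime, the quotient $\tfrac{2g}{ed}=g$ is prime, so \Cref{IVcent} applies and yields that $G_{\mathrm{dR}}^\circ$ and $G_{\mathrm{dR}}$ have the same centralizer in $\End(H^1_{\mathrm{dR}}(A,\bar K))$. I then invoke the Mumford--Tate conjecture for absolutely simple abelian varieties of prime dimension with multiplication by an imaginary quadratic field (known by work of Tankeev, Chi and Ribet; cf.\ the references in \cite{BGK2}), and follow the closing argument of \ref{irryes} verbatim: combine \Cref{Bost}, \Cref{=rk} and \Cref{keylem} (as in the proof of \Cref{Qcase}) to conclude that $G^\circ_{\mathrm{dR}}=G_{\mathrm{MT}}$, giving \Cref{conj_main} for $A$.

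The main subtlety is really just the case analysis: the parity of $e$, forced by $F$ being CM, rules out the value $ed^2=g$ (which would be odd for odd prime $g$) and pins down the unique non-CM possibility $(e,d)=(2,1)$. Beyond this, the corollary is a direct application of \Cref{IVcent} together with the general strategy of \ref{irryes}, so no essential obstacle is expected.
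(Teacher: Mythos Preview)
Your proposal is correct and follows essentially the same route as the paper: reduce to the non-CM case, use $ed^2\mid 2g$ together with $e$ even to force $(e,d)=(2,1)$, then apply \Cref{IVcent}, the known Mumford--Tate conjecture for this case (the paper cites \cite{Chi1}*{Thm.~3.1}), and \Cref{keylem}. Your case analysis is slightly more explicit than the paper's, but the argument is the same.
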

\begin{proof}
Notice that when $g$ is a prime, then $d$ must be $1$ and $e$ must be $2$ or $2g$. The second case is when $A$ has complex multiplication and \Cref{conj_main} is known. In the first case, $\frac{2g}{ed}(=g)$ is a prime. Then \Cref{conj_main} is a consequence of \Cref{IVcent} and the Mumford--Tate conjecture (\cite{Chi1}*{Thm. 3.1}) by \Cref{keylem}.
\end{proof}

\begin{corollary}\label{primedim}
If the dimension of $A$ is a prime and $\End_{\bar{K}}(A)$ is not $\bZ$, then \Cref{conj_main} holds.
\end{corollary}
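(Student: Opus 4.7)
The plan is to reduce to the case when $A$ is absolutely simple and then enumerate the possibilities for $D := \End^\circ_{\bar{K}}(A)$ by running through Albert's classification, invoking in each case the corresponding result already established in \cref{sub_pf}. The reduction itself is short: if $A_{\bar{K}}$ fails to be simple, write $A_{\bar{K}} \sim \prod A_i^{n_i}$; since $g = \dim A$ is prime and each $\dim A_i \geq 1$, every simple factor $A_i$ must be an elliptic curve. Then \Cref{knowncase}(1) and the product/isogeny argument indicated in \ref{irryes} reduce \Cref{conj_main} for $A$ to the known elliptic curve case.

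Assume then that $A$ is absolutely simple, so that $D$ is a division algebra. I would dispatch each Albert type separately. In Type I, $D = F$ is a totally real field of degree $e$ with $e \mid g$; the hypothesis $D \neq \bQ$ gives $e > 1$, and primality of $g$ forces $e = g$, so \Cref{e=g} applies. In Types II and III, $D$ is a quaternion algebra over a totally real field of degree $e$, and the faithful $D$-action on the $2g$-dimensional $\bQ$-vector space $H^1_{\mathrm{B}}(A,\bQ)$ forces $4e \mid 2g$, i.e.\ $2e \mid g$; for $g$ an odd prime this cannot occur, while for $g = 2$ it forces $e = 1$, giving $g = 2e$ so that \Cref{g=2e} applies. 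In Type IV, \Cref{IV} gives the conclusion directly.

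The case analysis is then exhaustive and there is no real obstacle here, since the substantive work has already been carried out in the propositions being invoked, most notably \Cref{cm=torus} (used inside \Cref{e=g} to rule out the torus case via Noot's theorem), the decomposition argument behind \Cref{IVcent}, and the various inputs to the Mumford--Tate conjecture from \cite{BGK1}, \cite{BGK2}, and \cite{Chi1}. The only point requiring any care is checking that the numerical constraints imposed by each Albert type, combined with the primality of $g$, leave exactly the configurations that have been handled in the preceding propositions.
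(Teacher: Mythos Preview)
Your case analysis for absolutely simple $A$ is correct and essentially identical to the paper's own proof, which likewise runs through Albert's classification and invokes \Cref{e=g}, \Cref{g=2e}, and \Cref{IV}.

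However, your reduction step at the start contains an error: from $g$ prime and $A_{\bar K}$ not simple you cannot conclude that every simple factor is an elliptic curve. For example $g=5$ allows a decomposition into simple factors of dimensions $2$ and $3$. In fact no reduction is needed here: the corollary sits inside \cref{sub_pf}, which is organised by Albert type and therefore treats absolutely simple $A$ throughout; the paper's own proof accordingly begins directly with the type analysis. (Were one to insist on the corollary in full generality for non-simple $A$, it would not follow from the results of this paper, since a simple factor could be, say, a surface with $\End_{\bar K}=\bZ$ over a field of even degree over $\bQ$---exactly the situation the paper leaves open.)
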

\begin{proof}
If $g=2$, then $A$ has CM or is of type I with $e=g$ or is of type II with $g=2e$. If $g$ is an odd prime, then $A$ is of type I with $e=g$ or of type IV. We conclude by \Cref{e=g}, \Cref{g=2e}, and \Cref{IV}.
\end{proof}

The following theorem summarizes the main results discussed above. 
\begin{theorem}\label{thm_K}
Let $A$ be a polarized abelian variety over some number field $K$ and assume that $A$ satisfies one of the following conditions:
\begin{enumerate}
\item $A$ is an elliptic curve or has complex multiplication.
\item $A_{\bar{K}}$ is simple; the dimension of $A$ is a prime number and $\End_{\bar{K}}(A)$ is not $\bZ$.
\item The polarized abelian variety $A$ of dimension $g$ with $\End_{\bar{K}}(A)=\bZ$ is defined over a finite Galois extension $K$ over $\bQ$  such that $[K:\bQ]$ is prime to $g!$ and $2g$ is not of form $a^{2b+1}$ or $\binom{4b+2}{2b+1}$ where $a,b\in \bN\backslash \{0\}$. 
\end{enumerate}
Then the absolute Tate cycles of $A$ coincide with its Hodge cycles.
\end{theorem}

\section{A strengthening of the result of Bost and its application}\label{density}
The main result of this section is \Cref{3/4} and the idea is to apply \Cref{Gas}. This result is a strengthening of \Cref{Bost}. At the end of this section, we use \Cref{3/4} to study cycles of abelian surfaces. For simplicity, when there is no risk of confusion, we use $\beta$-cycles to indicate $\beta$-absolute-Tate cycles.
\subsection{A refinement of a theorem of Gasbarri in a special case}
For simplicity, we only work with the higher dimensional Nevanlinna theory for $\bC^d$. It is contained in the classical theory developed by Griffiths and King \cite{GK}. See also \cite{B01}*{Sec. 4.3} and \cite{G}*{Sec. 5.24}. We first recall the setting of the algebraization theorem.

\begin{para}
Let $X$ be a geometrically irreducible quasi-projective variety of dimension $N$ over some number field $K$, $P$ a $K$-point of $X$, and $\widehat{X}_P$ the formal completion of $X$ at $P$.
Let $\widehat{V}$ be a smooth formal subvariety of $\widehat{X}_P$ of dimension $d$. 
We say $\widehat{V}$ is \emph{algebraic} if the smallest Zariski closed subset $Y$ of $X$ containing $P$ such that $\widehat{V}\subset \widehat{Y}_{P}$ has the same dimension as $\widehat{V}$. We say $\widehat{V}$ is \emph{Zariski dense} if $Y=X$.

For every complex embedding $\sigma:K\rightarrow \bC$, we assume that there exists a holomorphic morphism $\gamma_\sigma:\bC^d\rightarrow X_\sigma(\bC)$ which sends $0$ to $P_\sigma$ and maps the germ of $\bC^d$ at $0$ biholomorphically onto the germ $V^{an}_\sigma$ of $\widehat{V}$.

Let $z=(z_1,\dots,z_d)$ be the coordinate of $\bC^d$. The Hermitian norm $||z||$ on $\bC^d$ is given by $(|z_1|^2+\cdots+|z_d|^2)^{1/2}$. Let $\omega$ be the Kahler form on $\bC^d-\{0\}$ defined by $dd^c\log ||z||^2$. Then $\omega$ is the pull-back of the Fubini--Study metric on $\bP^{d-1}(\bC)$ via $\pi:\bC^d-\{0\}\rightarrow \bP^{d-1}(\bC)$.

To use the slope method introduced by Bost, we choose a flat projective scheme $\cX$ over $\Spec(\cO_K)$ such that $\overline{X}\colonequals \cX_K$ is some compactification of $X$. We also choose a relatively ample Hermitian line bundle $(\cL,\{||\cdot||_\sigma\}_\sigma)$ on $\cX$ over $\Spec(\cO_K)$. Let $\eta$ is the first Chern form (also known as the curvature form) of the fixed Hermitian ample line bundle $\cL|_{X_\sigma}$. 
We always assume that $\eta$ is positive, which is possible by a suitable choice of the Hermitian metric. 
\end{para}

\begin{defn}
The \emph{characteristic function} $T_\gs(r)$ is defined as follows:
\begin{equation*}
T_\gs(r)=\int_0^r\frac {dt}t \int_{B(t)}\gamma_\sigma^*\eta\wedge \omega^{d-1},
\end{equation*}
where $B(t)$ is the ball around $0$ of radius $t$ in $\bC^d$.
\end{defn}

\begin{defn}
The \emph{order} $\rs$ of $\gs$ is defined to be $\displaystyle \limsup_{r\rightarrow \infty}\frac{\log T_\gs(r)}{\log r}$. It is a standard fact that $\rs$ is independent of the choice of the Hermitian ample line bundle on $\overline{X}_\sigma$. (See for example \cite{G}*{Thm.~4.13(c) and Prop.~5.9}.) When $\rs$ is finite, $\gs$ is of order at most $\rs$ if and only if for any $\epsilon>0$, we have $T_\gs(r)<r^{\rs+\epsilon}$ for $r$ large enough. We denote by $\rho$ the maximum of all $\rs$ over all archimedean places $\sigma$.
\end{defn}

\begin{para}
Let $\cF$ be an involutive subbundle of the tangent bundle $T_X$ of $X$. We only focus on the case when $\widehat{V}$ is the formal leaf of $\cF$ passing through $P$. We may spread out $\cF$ and $X$ and assume that they are defined over $\cO_K[1/n]$ for some integer $n$. For a finite place $v$ with residue characteristic $p_v\nmid n$, the tangent bundle $T_{X_{k_v}}$ can be identified with the sheaf of derivations. Given a derivation $D$, then its $p_v$-th iterate $D^{p_v}$ is still a derivation; the map $D\mapsto D^{p_v}$ is called the \emph{$p_v$-th power map}. Let $M_{good}$ be the set of finite places $v$ of $K$ such that $\text{char } (k_v)\nmid n$ and that $\cF\otimes k_v$ is stable under $p_v$-th power map of derivations. Let $\alpha$ be the \emph{arithmetic density} of bad places:
$$\limsup_{x\rightarrow \infty}\left(\sum_{v|p_v\leq x, v\notin M_{good}}\frac{[L_v:\bQ_{p_v}]\log p_v}{p_v-1}\right)\left([L:\bQ]\sum_{p\leq x}\frac{\log p}{p-1}\right)^{-1}.$$
\end{para}

\begin{theorem}\label{Gas}
Assume that $\widehat{V}$ is a formal leaf that is Zariski dense in $X$, then either $\widehat{V}$ is algebraic (and $N=d$) or we have $$1\leq \frac N{N-d}\rho\alpha.$$
\end{theorem}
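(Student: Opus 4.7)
The plan is to adapt the slope method of Bost, in the higher-dimensional form of Gasbarri and with Herblot's refinement allowing a positive density $\alpha$ of bad places. I would argue by contradiction: assume $\hat V$ is Zariski dense in $X$, and derive the inequality $1 \le \frac{N}{N-d}\rho\alpha$.

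\emph{Setup.} First I would spread out $X$, $\cF$, and $P$ to a projective compactification $\bar{\cX}$ over $\cO_K[1/n]$ carrying an ample Hermitian line bundle $\bar L$ with positive curvature at each archimedean place (so $\eta = c_1(\bar L, \|\cdot\|)$). For each $D \ge 1$, consider the Hermitian $\cO_K[1/n]$-module $E_D = H^0(\bar{\cX}, \bar L^{\otimes D})$ with its natural $L^2$-metric, and its decreasing filtration $F^i E_D$ by sections vanishing along $\hat V$ at $P$ to order $\ge i$. Zariski density of $\hat V$ gives $\bigcap_i F^i E_D = 0$, and each graded quotient $\mathrm{gr}^i E_D$ injects into $\mathrm{Sym}^i(T_P\hat V)^\vee \otimes \bar L^D|_P$, so has rank at most $\binom{d-1+i}{d-1}$. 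Combined with the asymptotic Riemann--Roch $\mathrm{rk}\,E_D \sim c_1 D^N$, this forces the largest nonzero level $I = I(D)$ to satisfy $I \sim \kappa D^{N/d}$, much larger than $D$ since $N > d$.

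\emph{Slope inequality and local bounds.} Next I would apply Bost's slope inequality to each evaluation map $\varphi^i : F^i E_D/F^{i+1} E_D \hookrightarrow \mathrm{Sym}^i(T_P\hat V)^\vee \otimes \bar L^D|_P$ and sum over $i$ to obtain
\begin{equation*}
\widehat{\deg}\,E_D \;\le\; \sum_i \mathrm{rk}\,\mathrm{gr}^i E_D \cdot \Bigl(\hat\mu_{\max}(\mathrm{target}_i) + \sum_v \log\|\varphi^i\|_v\Bigr).
\end{equation*}
The local norms are estimated place by place. At an archimedean $\sigma$, Cauchy's multivariable integral formula applied to $s\circ\gs$, together with the Nevanlinna-type estimate $\sup_{B(r)}|s\circ\gs|_{h_L^D} \lesssim \|s\|_{L^2}\exp(D\,T_\gs(r))$ coming from the definition of $\rs$, and optimization in the radius $r$, yield $\log\|\varphi^i\|_\sigma \le -(i/\rs)\log(i/D) + O(i)$ in the regime $i \gg D$. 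At $v \in M_{good}$ with residue characteristic $p_v$, the $p$-th power stability of $\cF \bmod v$ combined with the Bost--Chudnovsky arithmetic Schwarz gain produces $\log\|\varphi^i\|_v \le -i\,[K_v{:}\bQ_{p_v}]\log p_v/(p_v - 1)$. At $v \notin M_{good}$, integrality of the model gives $\log\|\varphi^i\|_v \le 0$, so no contribution.

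\emph{Global combination.} Finally I would truncate the non-archimedean gain at residue characteristics $p_v \le P(D)$ and invoke Mertens' formula $\sum_{p \le P}\tfrac{\log p}{p-1} \sim \log P$ together with the definition of $\alpha$ to obtain a total good-place gain $\sim -(1-\alpha)[K:\bQ]\log P(D)$ times the appropriate sum over $i$. The archimedean total contributes $\sim -[K:\bQ]\rho^{-1}\log(I/D)$ times the same sum, with $\log(I/D) \sim \frac{N-d}{d}\log D$ from $I \sim D^{N/d}$. Matching the leading $I^{d+1}$ terms against $\widehat{\deg}\,E_D \sim c_3 D^{N+1} = o(I^{d+1}\log D)$ and optimally balancing $\log P(D)$ against $\log D$, the slope inequality is pushed to its saturation and rearranges to give the required $1 \le \frac{N}{N-d}\rho\alpha$, failing precisely when $\hat V$ is forced to be non-Zariski-dense.

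\emph{Main obstacle.} The subtlest point will be extracting the precise constant $\frac{N}{N-d}$: it comes out of a simultaneous optimization of the cutoff $P(D)$ in the prime count, the radii $r_i$ in the archimedean Cauchy estimates, and the parameter $D$, so that the leading logarithmic contributions from all places match exactly. A secondary technical issue is converting the sup-norm Cauchy bounds at archimedean places into the $L^2$ operator-norm bounds demanded by the slope inequality; this is handled by a standard Gromov-style comparison, with polynomial-in-$D$ losses absorbed into lower-order error terms.
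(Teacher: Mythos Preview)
Your overall strategy matches the paper's: the slope inequality applied to the filtration of $E_D = H^0(\bar\cX, \bar L^{\otimes D})$ by order of vanishing along $\hat V$, with archimedean heights controlled via the Nevanlinna characteristic and non-archimedean heights via the $p$-curvature condition. However, there is a genuine error in your non-archimedean bounds, and the mechanism for extracting the constant $\tfrac{N}{N-d}$ is not the one you sketch.

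\textbf{The non-archimedean signs are reversed.} In the standard normalization (target $\mathrm{Sym}^i(T_P\hat V)^\vee \otimes \bar L^D|_P$ with its natural integral structure, so that $\hat\mu_{\max}(\text{target}_i) = O(D)$ and does \emph{not} grow with $i$), the correct local bounds are: at a \emph{good} place $v$, the $p$-closure of $\cF\otimes k_v$ forces the formal leaf $\hat V$ to admit $\cO_v$-integral formal parameters, so the evaluation map is integral and $\log\|\varphi^i\|_v \le 0$; at a \emph{bad} place $v$, one only has the general radius-of-convergence bound $|p_v|_v^{1/(p_v-1)}$ for the parameterization of $\hat V$, giving $\log\|\varphi^i\|_v \le i\,[K_v{:}\bQ_{p_v}]\tfrac{\log p_v}{p_v-1}$. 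You have these swapped. In particular, your claim that ``integrality of the model'' yields $\log\|\varphi^i\|_v \le 0$ at bad places is false: the model $\bar\cX$ is integral, but the formal leaf $\hat V$ need not be at bad $v$, and that is exactly where the loss enters. With your signs, the right-hand side of the slope inequality would be arbitrarily negative (let $P(D)\to\infty$), forcing non-density for every $\rho,\alpha$ --- clearly too strong. The correct accounting gives, after Mertens, a total non-archimedean contribution $\le (\alpha+\epsilon)\,i\log i + O(i)$; this is the content of the paper's Lemma~\ref{finite} (Herblot's Prop.~3.6), and it is a \emph{loss} coming from the bad places, not a gain from the good ones.

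\textbf{Extracting the constant.} The paper does not use a truncation $P(D)$ or an ``optimal balancing'' of several parameters. Instead, combining the archimedean bound $-\tfrac{i}{\rho+\epsilon}\log\tfrac{i}{D}$ (your archimedean estimate is correct and matches Lemma~\ref{infinite}) with the non-archimedean $(\alpha+\epsilon)i\log i$, the summand in the slope inequality becomes
\[
\bigl(\alpha+\epsilon-\tfrac{1}{\rho+\epsilon}\bigr)i\log i \;+\; \tfrac{i}{\rho+\epsilon}\log D \;+\; O(i+D).
\]
One then splits the sum at the single threshold $i=D^\delta$: the head $|S_D(\delta)|$ is $O(D^{(d+1)\delta}\log D)$ by the rank bound $\mathrm{rk}(\mathrm{gr}^i)\le C i^{d-1}$, while the tail $S'_D(\delta)$ (with sign flipped) is $\ge C\,D^{N+\delta}\log D$ provided $\tfrac{1}{1-(\rho+\epsilon)(\alpha+\epsilon)} < \delta < N/d$ (Herblot's Lem.~6.15, using $\sum_{i>D^\delta}\mathrm{rk}(\mathrm{gr}^i)\sim D^N$ when $\delta d<N$). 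If such a $\delta$ exists, the tail dominates both the head and $\widehat{\deg}\,E_D = O(D^{N+1})$, contradicting the slope inequality. Hence no such $\delta$ exists, i.e.\ $\tfrac{N}{d} \le \tfrac{1}{1-\rho\alpha}$, which rearranges to $1\le \tfrac{N}{N-d}\rho\alpha$. This single-parameter $\delta$ argument is what replaces your multi-parameter optimization and is how the sharp constant actually emerges.
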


This is a refinement of a special case of \cite{G}*{Thm. 5.21}. We first use it to prove \cref{prop_density1}.
\begin{proof}[Proof of \Cref{prop_density1}]
The idea is due to Bost. We apply \Cref{Gas} to the formal leaf $\widehat{V}$ passing through identity of the involutive subbundle of the tangent bundle of $G$ generated by $W$ via translation. We take the uniformization map to be the exponential map $W(\bC)\rightarrow \Lie G(\bC)\rightarrow G(\bC)$. It is a standard fact that the order $\rho$ of this uniformization map is finite.\footnote{In \cite{BW}*{p. 112}, they summarized some results of Faltings and W\"utholz that enable us to show $\rho$ is $2$ by standard complex analytic arguments.} 
On the other hand, the assumptions on $W$ are equivalent to the assumption that the density $\alpha$ of bad primes is $0$. There would be a contradiction with \Cref{Gas} if $\widehat{V}$ is not algebraic.
\end{proof}

To get the better bound here, we use some ideas from \cite{H}.  
We recall the notation in \cite{B01}*{Sec. 4.2.1} and \cite{H}.

\begin{para}
For $D\in \bN$, let $E_D$ be the finitely generated projective $\cO_K$-module $\Gamma(\cX,\cL^D)$. For $n\in \bN$, let $V_n$ be the $n$-th infinitesimal neighborhood of $P$ in $\widehat{V}$ and let $V_{-1}$ be $\emptyset$. We define a decreasing filtration on $E_D$ as follows: for $i\in \bN$, let $E_D^i$ be the sub $\cO_K$-module of $E_D$ consisting of elements vanishing on $V_{i-1}$. We consider 
$$\phi_D^i:E_D^i\rightarrow \ker(\cL^{\otimes D}|_{V_i}\rightarrow \cL^{\otimes D}|_{V_{i-1}})\cong S^i(T_P \widehat{V})^\vee \otimes (\cL_P)^{\otimes D},$$
where the first map is evaluation on $V_i$ and $S^i$ denotes the $i$-th symmetric power. We will also use $\phi_D^i$ to denote its linear extension $E^i_D\otimes K\rightarrow S^i(T_P \widehat{V})^\vee \otimes (\cL_P)^{\otimes D}$.
To define the height $h(\phi^i_D)$, we need to specify the structure of the source and the target of $\phi^i_D$ as Hermitian vector bundles (over $\cO_K$) . Notice that the choice of $\cX$ gives rise to a projective $\cO_K$-module $\cT^\vee$ in $(T_P\widehat{V})^\vee$. More precisely, since $\cX$ is projective, there is a unique extension $\cP$ of $P$ over $\cO_K$, we take $\cT^\vee$ to be the image of $\cP^*\Omega_{\cX/\cO_K}$ in $(T_P\widehat{V})^\vee$. Moreover, $\cP^*\cL$ is a projective $\cO_K$-module in $\cL_P$. Then for any finite place $v$, we have a unique norm $||\cdot||_v$ on $E^i_D\otimes K$ (resp. $S^i(T_P \widehat{V})^\vee \otimes (\cL_P)^{\otimes D}$) such that for any element $s$, $||p_v^ms||_v\leq p_v^{-m[K_v:\bQ_p]}$ if and only if $s\in E^i_D$ (resp. $s\in S^i \cT^\vee \otimes (\cP^*\cL)^{\otimes D}$).
For an archimedean place $\sigma$, given the Hermitian norm on $\cL$, we equip $E^i_D\otimes K$ and $\cL_P$ with the supremum norm and the restriction norm. We fix a choice of Hermitian norm on $T_P\widehat{V}$ and then obtain the induced norm on $S^i \cT^\vee \otimes (\cP^*\cL)^{\otimes D}$.\footnote{To obtain the norm on $S^i\cT^\vee$, we view it as a quotient of $(\cT^\vee)^{\otimes i}$.}
We define $$h(\phi^i_D)=\frac 1{[K:\bQ]}\sum_{\text{all places }v}h_v(\phi^i_D),\text{ where }h_v(\phi^i_D)=\sup_{s\in E^i_D, ||s||_v\leq 1}\log||\phi^i_D(s)||_v.$$
\end{para}

\begin{lemma}\label{infinite}
For any $\epsilon>0$, and any complex embedding $\sigma$, there exists a constant $C_1$ independent of $i,D$ such that
$$h_\sigma(\phi^i_D)\leq C_1(i+D)-\frac i {\rs+\epsilon} \log \frac i D.$$
In particular, $$\frac 1 {[K:\bQ]} \sum_\sigma h_\sigma(\phi^i_D)\leq C_1(i+D)-\frac i {\rho+\epsilon} \log \frac i D.$$
\end{lemma}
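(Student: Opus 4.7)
The plan is to interpret $h_\sigma(\phi^i_D)$ as controlling, up to uniformly bounded data, the archimedean sup-norm at the origin of $\mathbb{C}^d$ of the pullback $\gs^\ast s$ of a unit-norm global section $s$ of $\mathcal{L}^{\otimes D}$ that vanishes to order at least $i$ along $\hat V$. Because $\gs$ maps $0\mapsto P_\sigma$ and parametrizes the germ of $\hat V$ biholomorphically, such a section pulls back to a holomorphic function on $\mathbb{C}^d$ vanishing to order at least $i$ at the origin. I would then combine a Schwarz lemma in $d$ complex variables with the growth bound $T_\gs(r)\leq r^{\rs+\epsilon}$ afforded by the definition of the order $\rs$, and optimize in $r$.

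Concretely, the Schwarz lemma on $\mathbb{C}^d$ gives, for any holomorphic $f$ on the ball $B(r)$ vanishing to order at least $i$ at the origin and any $r\geq 1$, the estimate $\log\sup_{B(1)}|f|\leq \log\sup_{B(r)}|f|-i\log r$. The First Main Theorem of Nevanlinna theory, in the form recalled in \cite{B01}*{Sec.~4.2.1} and \cite{G}*{Sec.~5.24}, supplies a constant $C_0$, depending only on the fixed Hermitian data, such that $\log\sup_{B(r)}|\gs^\ast s|\leq D\,T_\gs(r)+C_0 D$ for every $s$ as above. Inserting $T_\gs(r)\leq r^{\rs+\epsilon}$, valid for $r\geq R_\epsilon$, yields
\[
\log\sup_{B(1)}|\gs^\ast s|\;\leq\;D\,r^{\rs+\epsilon}+C_0 D-i\log r.
\]
The critical point of the right-hand side is $r=\bigl(i/(D(\rs+\epsilon))\bigr)^{1/(\rs+\epsilon)}$, at which the expression reduces to $\tfrac{i}{\rs+\epsilon}-\tfrac{i}{\rs+\epsilon}\log\tfrac{i}{D(\rs+\epsilon)}+C_0 D$. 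Absorbing the $\log(\rs+\epsilon)$ term and $C_0 D$ into $C_1(i+D)$ produces the stated inequality.

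The regime in which $i/D$ is bounded, so the optimal $r$ would fall below $\max(1,R_\epsilon)$, must be handled separately by clamping $r$ to that threshold; the resulting defect is linear in $D$ and hence absorbed into $C_1(i+D)$, while in this range the correction $-\tfrac{i}{\rs+\epsilon}\log(i/D)$ is itself bounded by a constant multiple of $i$, so the final inequality remains consistent. The averaged inequality then follows termwise using $\rs\leq\rho$ for every $\sigma$.

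The analytic optimization above is essentially standard; the delicate point in carrying it out rigorously is identifying $h_\sigma(\phi^i_D)$ with the log-sup-norm of the pullback of a section of $\mathcal{L}^{\otimes D}$ at the origin within the slope-method framework of \cite{B01}, and tracking the uniformity of the Hermitian constants (in particular $C_0$) independently of $D$, $i$, $s$, and $r$. Once those identifications and uniformities are in place, the proof is just Schwarz plus Jensen plus optimization.
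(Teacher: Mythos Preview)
Your proposal is correct and follows essentially the same approach as the paper: both arguments reduce to the inequality $h_\sigma(\phi^i_D)\leq -i\log r + D\,T_\gs(r) + O(i+D)$ (which the paper quotes as \cite{B01}*{Cor.~4.16} and you unpack as Schwarz lemma plus First Main Theorem), then substitute $T_\gs(r)\leq r^{\rs+\epsilon}$, optimize in $r$, and treat the bounded-$i/D$ regime separately via a crude linear bound. The only cosmetic difference is that the paper's linear remainder is $B_1 i$ rather than your $C_0 D$, but both are absorbed into $C_1(i+D)$.
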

\begin{proof}
This is \cite{G}*{Thm. 5.19 and Prop. 5.26}. We sketch a more direct proof\footnote{We use the definition of the order as in \cite{B01} rather than as in \cite{G}. Gasbarri gave a proof showing that two definitions are the same, but in this paper, we only need to work with the definition in \cite{B01}.}  for the special case here using the same idea originally due to Bost. See also \cite{H}*{Lem. 6.8}.

By \cite{B01}*{Cor. 4.~16} (a consequence of the First Main Theorem in Nevanlinna theory), there exists a constant $B_1$ only depend on $d$ such that
$$h_\sigma(\phi^i_D)\leq -i\log r +D T_\gs(r) +B_1 i.$$
By the definition of $\rs$, there exists a constant $M>0$ such that for all $r>M$, we have $T_\gs(r)<r^{\rs+\epsilon}$.
On the other hand, as in the proof of \cite{G}*{Thm. 4.15}, $-i\log r+Dr^{\rs+\epsilon}$, as a function of $r$, reaches its minimum in $r_0=(\frac i {(\rs+\epsilon)D})^{1/(\rs+\epsilon)}$. Pick a constant $M_1$ such that if $i/D>M_1$, then $r_0>M$. Therefore, for $i, D$ such that $i/D>M_1$, we have
$$h_\sigma(\phi^i_D)\leq -i\log r_0 +Dr_0^{\rs+\epsilon} +B_1 i\leq -\frac i {\rs+\epsilon} \log \frac i D+B_2 i,$$
for some constant $B_2$.
Moreover, there exists a constant $B_3$ such that (see for example \cite{B01}*{Prop. 4.12}) for any $i,D$,
$$h_\sigma(\phi^i_D)\leq B_3(i+D).$$
Since there exists a constant $B_4$ such that $\frac i {\rs+\epsilon} \log \frac i D\leq B_4i$ for all $i, D$ such that $i/D\leq M_1$, we have 
$$h_\sigma(\phi^i_D)\leq (B_3+B_4)(i+D)-\frac i {\rs+\epsilon} \log \frac i D.$$
We can take $C_1$ to be $\max\{B_2, B_3+B_4\}$.
\end{proof}

\begin{lemma}[\cite{H}*{Prop. 3.6}]\label{finite}
For any $\epsilon>0$, there exists a constant $C_2$ such that
$$\frac 1 {[K:\bQ]} \sum_{\text{all finite places $v$}} h_v(\phi^i_D)\leq  (\alpha+\epsilon)i\log i+C_2(i+D).$$
\end{lemma}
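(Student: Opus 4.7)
The plan is to decompose the sum over all places of $K$ according to arithmetic type: archimedean places, good finite places $v \in M_{good}$, and bad finite places $v \notin M_{good}$, and to bound each group separately. Archimedean places contribute $O(i+D)$ after averaging, using the bound from \Cref{infinite} and either dropping the negative term $-\frac{i}{\rho+\epsilon}\log(i/D)$ (in the regime $i\geq D$) or noting that $x\log(1/x)=O(1)$ on $(0,1]$ (in the regime $i<D$). For good finite places, the hypothesis that $\cF\otimes k_v$ is stable under the $p$-th power operation on derivations means that the formal leaf $\hat{V}$ admits integral models modulo $v$ without extra denominators; a standard integrality argument (originating with the Chudnovskys and formalised in the Bost--Andr\'e theory of $G$-functions) then yields $h_v(\phi^i_D)\leq 0$ outside a fixed finite exceptional set of good places depending only on the chosen integral models, and this exceptional set contributes only $O(i+D)$ after normalisation.

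The essential contribution, and the source of the $\alpha i\log i$ term, comes from bad finite places. The key observation is that any denominators introduced by iterating $p$-th power derivations on jets of order at most $i$ are $p_v$-adic with valuation bounded by $i/(p_v-1)$, giving the trivial local bound
\begin{equation*}
h_v(\phi^i_D) \;\leq\; C[L_v:\bQ_{p_v}](i+D) \;+\; [L_v:\bQ_{p_v}]\cdot\frac{i\log p_v}{p_v-1}.
\end{equation*}
At bad places with $p_v>i$ the factorials of order $\leq i$ are $p_v$-adic units, so the second term vanishes and only bad places with $p_v\leq i$ contribute non-trivially to the $i\log i$ leading order.

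Summing the second term over $v\notin M_{good}$ with $p_v\leq i$ and applying the definition of $\alpha$ with the threshold $x=i$ gives, after dividing by $[K:\bQ]$, a bound of $(\alpha+\epsilon/2)\,i\sum_{p\leq i}\log p/(p-1) + O(i)$. Mertens' estimate $\sum_{p\leq i}\log p/(p-1)=\log i+O(1)$ then converts this into $(\alpha+\epsilon)\,i\log i+O(i)$, which combined with the $O(i+D)$ contributions from archimedean and good finite places yields the claimed bound $(\alpha+\epsilon)i\log i+C_2(i+D)$. The main technical input is the uniform denominator estimate $i/(p_v-1)$ at bad places; once one has this estimate, the definition of $\alpha$ together with Mertens' theorem is precisely tailored to produce the leading term, and the truncation at $p_v\leq i$ (which is what allows the finite sum of $\log p/(p-1)$ to be compared with $\log i$) is the only subtle point. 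This denominator analysis is carried out in \cite{B01} and refined in \cite{H}, so the proof amounts to assembling these ingredients in the order sketched above.
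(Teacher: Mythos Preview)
The paper's own proof is simply a citation to \cite{H}*{Prop.~3.6}, and your sketch is essentially the argument that reference contains: split into good and bad finite places, use vanishing $p$-curvature at good places to get $h_v(\phi^i_D)\leq 0$ outside a finite exceptional set, and at bad places use the universal factorial denominator bound $v_{p_v}(i!)\leq i/(p_v-1)$ together with the definition of $\alpha$ and Mertens' estimate $\sum_{p\leq i}\tfrac{\log p}{p-1}=\log i+O(1)$.

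Two corrections. First, in context ``all places'' means all \emph{finite} places: in the proof of \Cref{Gas} the lemma is added to \Cref{infinite} to bound the total height, so the archimedean contribution is accounted for by \Cref{infinite} alone. Your archimedean discussion is therefore correct but unnecessary. Second, and more substantively, the local bound you write for bad places,
\[
h_v(\phi^i_D)\leq C[L_v:\bQ_{p_v}](i+D)+[L_v:\bQ_{p_v}]\cdot\frac{i\log p_v}{p_v-1},
\]
cannot be summed as stated: there may be infinitely many bad places, and $\sum_{v\text{ bad}}C[L_v:\bQ_{p_v}](i+D)$ diverges. The correct formulation is that there is a \emph{finite} set $S$ of places (determined by the integral models) such that $h_v(\phi^i_D)\leq C_S(i+D)$ for $v\in S$, while for \emph{every} $v\notin S$ (good or bad) one has the clean bound $h_v(\phi^i_D)\leq [K_v:\bQ_{p_v}]\cdot\tfrac{i\log p_v}{p_v-1}$, which is $\leq 0$ once $p_v>i$ and which one improves to $\leq 0$ at good places. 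With the $C(i+D)$ term restricted to the finite set $S$, your summation and the appeal to the definition of $\alpha$ go through exactly as you wrote.
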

\begin{proof}
We sketch the proof by Herblot for the convenience of readers. By \cite{B01}*{Lem.~4.10}, for any finite place $v$, there exists a constant $C_v$ such that $h_v(\phi^i_D)\leq C_v(i+D)$.
Let $M_{br}$ be the set of primes $v$ such that $\cX$ and $\widehat{V}$ are not smooth at $v$. By the discussion in \cite{B01}*{sec.~3.4.1}, for any $v\notin M_{br}$, if $p_v>i$, then $h_v(\phi^i_D)\leq 0$. Moreover, if $v\in M_{good}$, then $\displaystyle h_v(\phi^i_D)\leq \frac{i[K_v:\bQ_{p_v}]\log p_v}{p_v(p_v-1)}$ by \cite{CL}*{Lem.~7.6}; if $v\notin M_{good}\cup M_{br}$, then $\displaystyle h_v(\phi^i_D)\leq \frac{i[K_v:\bQ_{p_v}]\log p_v}{p_v-1}$ by \cite{B01}*{Prop.~3.9(1)}. Since $\displaystyle \sum_{p}\frac{\log p}{p(p-1)}<\infty$, then by putting the above inequalities together, we have (here $B_5$ is a constant)
$$\sum_{v\nmid \infty}h_v(\phi^i_D)\leq \sum_{v\in M_{br}}h_v(\phi^i_D)+\sum_{v\notin M_{br},p_v\leq i}h_v(\phi^i_D)\leq B_5(i+D)+\sum_{v\notin M_{good}\cup M_{br},p_v\leq i}\frac{i[K_v:\bQ_{p_v}]\log p_v}{p_v-1}.$$ Then the assertion follows from our defintion of $\alpha$ and the fact that $\displaystyle \sum_{p\leq i}\frac{\log p}{p-1}$ is asymptotic to $\log i$.
\end{proof}

\begin{proof}[Proof of \Cref{Gas}] We follow \cite{H}*{Sec. 6.6}.
By the slope inequality and the arithmetic Hilbert--Samuel theorem (\cite{B01}*{Sec. 4.1, 4.2}), we have (see \cite{H}*{Sec. 6.3, Eqn. (6.10)} for reference\footnote{Although Herblot focuses on the case when $d=1$, this inequality holds in general as all the results by Bost cited here hold in general.})
$$-C_3D^{N+1}\leq \sum_{i=0}^\infty rk(E_D^i/E_D^{i+1})(C_4(i+D)+h(\phi_D^i)).$$
By \Cref{infinite} and \Cref{finite}, we have
$$-C_3D^{N+1}\leq \sum_{i=0}^\infty rk(E_D^i/E_D^{i+1})(C_5(i+D)+(\alpha+\epsilon-\frac 1 {\rho+\epsilon})i\log i+\frac i {\rho+\epsilon} \log D).$$
Write $$S_D(\delta)=\sum_{i\leq D^\delta}rk(E_D^i/E_D^{i+1})(-C_5(i+D)+(-\alpha-\epsilon+\frac 1 {\rho+\epsilon})i\log i-\frac i {\rho+\epsilon} \log D),$$ $$S'_D(\delta)=\sum_{i> D^\delta}rk(E_D^i/E_D^{i+1})(-C_5(i+D)+(-\alpha-\epsilon+\frac 1 {\rho+\epsilon})i\log i-\frac i {\rho+\epsilon} \log D).$$
By \cite{B01}*{Lem. 4.7 (1)}, $rk(E^0_D/E_D^{i+1})<(i+1)^d$. Hence (see \cite{H}*{Lem. 6.14}) if $\delta\geq 1$, then
$$|S_D(\delta)|\leq C_6 D^\delta \log D \sum_{i\leq D^\delta}rk(E_D^i/E_D^{i+1})\leq C_7 D^{(d+1)\delta}\log D.$$
On the other hand, if $\frac 1 {1-(\rho+\epsilon)(\alpha+\epsilon)}<\delta<N/d$, then by \cite{B01}*{Lem.~4.7 (2)}, there exist constants $C_{10}, C_{11}$ such that for $D$ large enough, $$\sum_{i>D^\delta}rk(E^i_D/E^{i+1}_D)=rk(E^0_D)-rk(E^0_D/E_D^{[D^\delta]+1})\geq C_{10}D^N-([D^\delta]+1)^d\geq C_{11}D^N.$$ Then \cite{H}*{Lem. 6.15} shows that for $D$ large enough,
$$S'_D(\delta)\geq C_8 D^{N+\delta}\log D.$$
If there exists a $\delta$ such that $1\leq\frac 1 {1-(\rho+\epsilon)(\alpha+\epsilon)}<\delta<N/d$, then $$S'_D(\delta)+S_D(\delta)\geq C_9 D^{N+\delta}\log D$$ for $D$ large enough, which contradicts the fact that $S'_D(\delta)+S_D(\delta)\leq C_3 D^{N+1}$.
In other words, $N/d\leq \frac 1 {1-(\rho+\epsilon)(\alpha+\epsilon)}$. As $\epsilon$ is arbitrary, we obtain the desired result by rearranging the inequality.
\end{proof}

\subsection{A strengthening of \Cref{Bost}}
\begin{para}\label{sbar}
If $s\in \End(H^1_{\mathrm{dR}}(A/\bar{K}))$ is a $\beta$-cycles for some $\beta>0$, then $\varphi_v(s)=s$ for infinitely many $v$ and thus $s\in \Fil^0(\End(H^1_{\mathrm{dR}}(A/\bar{K})))$ by \Cref{Fil0}. In other words, $s$ maps $\Fil^1(H^1_{\mathrm{dR}}(A/\bar{K}))$ to itself. Since $$H^1_{\mathrm{dR}}(A/\bar{K})/\Fil^1(H^1_{\mathrm{dR}}(A/\bar{K}))\cong \Lie \dA_{\bar{K}},$$ the cycle $s$ then induces an endomorphism $\bar{s}$ of $\Lie \dA_{\bar{K}}$.
\end{para}

\begin{theorem}\label{w3/4}
Assume that $A_{\bar{K}}$ is simple. If $s\in \End(H^1_{\mathrm{dR}}(A/\bar{K}))$ is a $\beta$-absolute-Tate cycle for some $\beta>\frac 34$, then $\bar{s}$ is the image of some element in $\End^\circ_{\bar{K}}(\dA)$.
\end{theorem}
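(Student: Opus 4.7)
The plan is to apply \Cref{Gas} in the spirit of Bost's proof of \Cref{Bost}, but with the quantitative refinement that a density $\beta > 3/4$ suffices. Since $s$ is a $\beta$-de Rham--Tate cycle with $\beta > 0$, \Cref{Fil0} guarantees $s \in \mathrm{Fil}^0\mathrm{End}(H^1\dR(A,\bar K))$, so $s$ descends to an endomorphism $\bar s$ of $\Lie \dA_{\bar K} \cong H^1\dR(A,\bar K)/\mathrm{Fil}^1$. Spread out data to a number field $L$ so that $\bar s$ is defined over $L$, and consider the commutative algebraic group $G = \dA_L \times \dA_L$ of dimension $N = 2g$. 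Let $W \subset \Lie G = \Lie \dA \oplus \Lie \dA$ be the graph of $\bar s$, a sub-$L$-vector space of dimension $d = g$. Translate $W$ to obtain an involutive, left-invariant subbundle $\cF \subset T_G$, and let $\hat V$ be its formal leaf through the origin.

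The main input to \Cref{Gas} is the pair $(\rho,\alpha)$. For the uniformization $\gs : W \otimes_{L,\sigma} \bC \hookrightarrow \Lie G \otimes \bC \xrightarrow{\exp} G_\sigma(\bC)$, classical estimates on the exponential of an abelian variety give $\rho = 2$. For the $p$-adic side, the key translation is the following: for every finite place $v \notin \Sigma$ of $L$ with $\varphi_v(s) = s$, the endomorphism $\bar s$ mod $v$ commutes with the $p$-th power derivation on $\Lie \dA \otimes k_v$, so $W \otimes k_v$ is stable under the $p$-th power map of derivations on $\Lie G \otimes k_v$. This is the mod $p$ content of $\varphi_v$-invariance that was used by Bost and is recorded in \cite{B01}. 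Consequently the set of bad places has density at most $1-\beta$, so $\alpha \leq 1-\beta < 1/4$ in the notation of \Cref{Gas}.

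Now suppose for contradiction that $\bar s$ is not of the desired form. We claim that $\hat V$ is Zariski dense in $G$. Indeed, suppose $\hat V$ lies in a proper closed subvariety, and let $Y$ be its Zariski closure; since $\hat V$ is $\cF$-invariant and $\cF$ is translation-invariant, the arguments in \cite{B01}*{Sec.~3} show that $Y$ is the formal completion of an algebraic subgroup $H \subset G$ with $\Lie H = W$. Then $H$ has dimension $g$ and its projections to the two factors $\dA$ are surjective (since $W$ projects isomorphically onto each copy of $\Lie \dA$); as $\dA$ is simple, $H$ is the graph of some $\bar K$-isogeny component, and $\bar s$ is the induced element of $\mathrm{End}^\circ_{\bar K}(\dA)$ acting on $\Lie \dA$, contradicting our assumption. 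Hence we may apply \Cref{Gas} to $X = G$, obtaining
\[
1 \;\leq\; \frac{N}{N-d}\,\rho\,\alpha \;=\; \frac{2g}{g}\cdot 2\cdot\alpha \;=\; 4\alpha \;<\; 1,
\]
a contradiction. Therefore $\bar s$ lies in the image of $\mathrm{End}^\circ_{\bar K}(\dA)$.

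The principal obstacle is verifying the mod $p$ $p$-curvature condition in the correct group. Working with $\dA$ (rather than the universal vector extension $E(\dA)$ used to realize all of $s$) loses part of $s$ but is precisely what lets the conclusion speak only about $\bar s$; one must check carefully that $\varphi_v$-invariance of $s$ on $H^1\cris$ implies $p$-power stability of the graph of $\bar s$ in $\Lie(\dA\times\dA) \otimes k_v$, which is a direct translation via the identification $\Lie \dA \cong H^1\dR(A)/\mathrm{Fil}^1$ compatibly with the Hodge filtration reduction of Frobenius. The rest is bookkeeping: matching $N$, $d$, $\rho$, $\alpha$ to the threshold $\beta > 3/4$ comes out exactly because $\frac{N}{N-d}\rho = 4$ in this geometry.
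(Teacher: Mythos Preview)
Your proposal is correct and follows essentially the same route as the paper: set $X=\dA\times\dA$, take $\hat V$ to be the formal leaf through the identity of the translation-invariant foliation with fiber the graph $W$ of $\bar s$, verify $\alpha\le 1-\beta$ via the $p$-th power map, bound the order of the exponential uniformization by $2$, and feed $N=2g$, $d=g$ into \Cref{Gas} to get the contradiction $1\le 4(1-\beta)$. Two small points where the paper is more explicit: (i) the bound $\rho\le 2$ is not merely ``classical'' in the paper but is computed directly by choosing $\cL$ to be a theta line bundle with translation-invariant metric, so that $\gamma_\sigma^*\eta$ has constant coefficients, and then estimating $\gamma_\sigma^*\eta\wedge\omega^{g-1}$ by $C\,\|z\|^{-2(g-1)}$ times the standard volume form; (ii) the $p$-curvature step is carried out exactly as you anticipate in your final paragraph---one first uses that on $\Lie E(\dA)\otimes k_v=H^1_{\mathrm{dR}}(A,k_v)$ the $p$-th power map agrees with $\varphi_v\otimes k_v$ (Mumford), so the graph of $s$ in $\Lie(E(\dA)\times E(\dA))\otimes k_v$ is $p$-closed when $\varphi_v(s)=s$, and then passes to the quotient $\Lie(\dA\times\dA)\otimes k_v$.
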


Before proving the theorem, we use it to prove a strengthening of \Cref{Bost}.

\begin{corollary}\label{3/4}
Assume that $A_{\bar{K}}$ is simple. If $s\in \End(H^1_{\mathrm{dR}}(A/\bar{K}))$ is a $\beta$-absolute-Tate cycle for some $\beta>\frac 34$, then $s$ is algebraic.
\end{corollary}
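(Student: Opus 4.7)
The plan is to apply \Cref{w3/4} in conjunction with the polarization $\psi$ and \Cref{Fil0}. First, by \Cref{w3/4} applied to $s$, since $A_{\bar{K}}$ is simple and $s$ is a $\beta$-de Rham--Tate cycle with $\beta>\tfrac34$, there is $t\in\End^\circ_{\bar K}(\dA)$ whose image in $\End(\Lie\dA_{\bar K})=\End(H^1_{\mathrm{dR}}(A,\bar K)/Fil^1)$ equals $\bar s$. Via the canonical identification $\End^\circ_{\bar K}(A)\cong\End^\circ_{\bar K}(\dA)$ and the cohomological action of endomorphisms on $H^1_{\mathrm{dR}}$, the element $t$ lifts to an algebraic cycle $\tilde t\in\End(H^1_{\mathrm{dR}}(A,\bar K))$. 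Since algebraic cycles are $1$-de Rham--Tate by \Cref{Bost}, the difference $s_1:=s-\tilde t$ is again a $\beta$-de Rham--Tate cycle, and by construction $\overline{s_1}=0$, so $s_1(H^1_{\mathrm{dR}}(A,\bar K))\subset Fil^1=\omega_A$.

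To conclude that $s_1=0$ via \Cref{Fil0}, it suffices to show $s_1\in Fil^1\End(H^1_{\mathrm{dR}}(A,\bar K))$, i.e., that $s_1$ also kills $\omega_A$. Here one uses the polarization $\psi$: the $\psi$-adjoint $s_1^*$ is $\beta$-de Rham--Tate (because $\psi$ is a Hodge cycle, hence $1$-de Rham--Tate). Since $\omega_A$ is $\psi$-Lagrangian and $s_1(H^1_{\mathrm{dR}})\subset\omega_A$, the identity $\psi(s_1(x),y)=\psi(x,s_1^*(y))$ gives $\psi(x,s_1^*(y))=0$ for all $x$ when $y\in\omega_A$, whence $s_1^*$ annihilates $\omega_A$.

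Next, apply \Cref{w3/4} to $s_1^*$: there is $v\in\End^\circ_{\bar K}(\dA)$ with $\overline{s_1^*}=\Lie v$. The key step is to show $v=0$, which yields $s_1^*\in Fil^1\End$, hence $s_1^*=0$ by \Cref{Fil0}, hence $s_1=0$ and $s=\tilde t$ is algebraic. In matrix form, with the splitting $H^1_{\mathrm{dR}}=\omega_A\oplus\Lie\dA_{\bar K}$, the endomorphism $\overline{s_1^*}$ equals the $\psi$-transpose of $s_1|_{\omega_A}$, so $v=0$ iff $s_1|_{\omega_A}=0$, i.e., $\tilde t|_{\omega_A}=s|_{\omega_A}$. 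One establishes this compatibility by also applying \Cref{w3/4} to the $\psi$-adjoint $s^*$, which identifies $s|_{\omega_A}$ (via the pairing $\psi$) with an algebraic endomorphism coming from some $u\in\End^\circ_{\bar K}(\dA)$, and then using the simplicity of $A_{\bar K}$: $\End^\circ_{\bar K}(A)$ is a division algebra acting faithfully on both $\omega_A$ and $\Lie\dA_{\bar K}$, and the algebraic-cycle subspace of $\End(H^1_{\mathrm{dR}}(A,\bar K))$ is stable under Rosati, which ties the outputs of the two applications of \Cref{w3/4} into a Rosati-matched pair forcing $\tilde t|_{\omega_A}=s|_{\omega_A}$.

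The main obstacle will be the final compatibility step, namely verifying that the algebraic cycles arising from applying \Cref{w3/4} to $s$ and to $s^*$ (or to $s_1^*$) are Rosati-compatible, so that a single algebraic lift $\tilde t$ suffices to match both graded pieces of $s$ along the Hodge filtration. This requires careful bookkeeping of the polarization-induced identifications between $\omega_A$ and $(\Lie\dA_{\bar K})^\vee$ and uses the simplicity of $A_{\bar K}$ in an essential way through the division-algebra structure of $\End^\circ_{\bar K}(A)$.
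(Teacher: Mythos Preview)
Your first two moves match the paper exactly: apply \Cref{w3/4} to get $t\in\End^\circ_{\bar K}(A)$ with $\bar t=\bar s$, and set $s_1=s-\tilde t$. From there, however, the paper is far more direct than you are. It simply records that $s-t$ is fixed by infinitely many $\varphi_v$ and lies in $Fil^1\bigl(\End(H^1_{\mathrm{dR}}(A,\bar K))\bigr)$, then invokes \Cref{Fil0} to conclude $s=t$. No polarization-adjoint, no second or third application of \Cref{w3/4}, no Rosati bookkeeping.

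You are right that the inclusion $s-t\in Fil^1$ is not literally the same as $\overline{s-t}=0$: the latter gives $(s-t)(H^1_{\mathrm{dR}})\subset\omega_A$, while $Fil^1\End$ also demands $(s-t)|_{\omega_A}=0$. The paper is terse here. But your attempted repair does not close the gap. Applying \Cref{w3/4} to $s_1^*$ only tells you that $\overline{s_1^*}$ (which, as you compute, is the $\psi$-transpose of $s_1|_{\omega_A}$) lies in the image of $\End^\circ_{\bar K}(\dA)$; it does \emph{not} force it to be zero. Your next move is to also apply \Cref{w3/4} to $s^*$ and then appeal to ``Rosati compatibility'' to match the two algebraic lifts, but nothing in your outline produces that compatibility. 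Rosati-stability of the subspace of algebraic classes says only that the algebraic part of $\End(H^1_{\mathrm{dR}})$ is $*$-closed; it does not say that the particular element of $\End^\circ_{\bar K}(A)$ realizing $\bar s$ on $\Lie\dA$ is the same one realizing $(s|_{\omega_A})^\vee$ on $\Lie A$. Knowing separately that each graded piece of $s$ along the Hodge filtration is algebraic is strictly weaker than knowing they come from a single algebraic endomorphism, and iterating \Cref{w3/4} on $s$, $s^*$, $s_1^*$ never bridges that.

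So the genuine gap in your write-up is exactly the one you flag at the end: the ``final compatibility step'' is not a bookkeeping issue but the whole content, and your argument does not supply it.
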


\begin{proof}
By \Cref{w3/4}, it suffices to show that if $s$ is fixed by infinitely many $\varphi_v$ and $\bar{s}$ (defined in \cref{sbar}) is algebraic, then $s$ is algebraic. Since the restriction to $\End^\circ_{L}(A)$ of the map
$$\Fil^0\End(H^1\dR(A/L))\rightarrow \End(\Lie A^\vee_L), \,s\rightarrow \bar{s}$$
is induced from the natural identification $\End^\circ_{L}(A)\cong\End^\circ_{L}(\dA)$, we obtain an algebraic cycle $t\in \End^\circ_{\bar{K}}(A)$ such that $\bar{t}=\bar{s}$. Then $s-t \in \Fil^1(\End(H^1_{\mathrm{dR}}(A/\bar{K}))$. Moreover, $s-t$ is a $\beta$-cycle and hence for infinitely many $v$, we have $\varphi_v(s-t)=s-t$. Then by \Cref{Fil0}, $s-t=0$ and hence $s$ is algebraic.
\end{proof}

\begin{rem}\label{g3/4}
The only place where we use the assumption of $A_{\bar{K}}$ being simple is to obtain the first assertion of \Cref{N=2g}. It says that if $s$ is not algebraic, then the Zariski closure of the $g$-dimensional formal subvariety that we will construct using $s$ is of dimension $2g$. In general, the Zariski closure of a non-algebraic $g$-dimensional formal subvariety is of dimension at least $g+1$ and then the same argument as below shows that any $\beta$-cycle with $\beta>1-\frac1{2(g+1)}$ is algebraic.
\end{rem}

\begin{para}
The proof of this theorem will occupy the rest of this subsection. Since the definition of $\beta$-cycle is independent of the choice of a field of definition and the property of being a $\beta$-cycle is preserved under isogeny, we may assume that $A$ is principally polarized and $s$ is defined over $K$.
 Let $X$ be $\dA\times \dA$ and $e$ be its identity. The main idea is to apply \Cref{Gas} to the following formal subvariety $\widehat{V}\subset \widehat{X}_{e}$.
Consider the sub Lie algebra ($\bar{s}$ is defined in \cref{sbar}) $$H=\{(a,\bar{s}(a))\mid a\in \Lie(\dA)\} \subset \Lie(X).$$ This sub Lie algebra induces an involutive subbundle $\cH$ of the tangent bundle of $X$ via translation. The formal subvariety $\widehat{V}$ is defined to be the formal leaf passing through $e$. A finite place $v$ of $K$ is called \emph{bad} if $\cH\otimes k_v$ is not stable under $p_v$-th power map of derivations.
\end{para}

\begin{lemma}\label{N=2g}
If $\bar{s}$ is not algebraic, then the formal subvariety $\widehat{V}$ is Zariski dense in $X$. The arithmetic density of bad primes is at most $1-\beta$.
\end{lemma}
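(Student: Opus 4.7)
The plan is to verify the two hypotheses required to apply \Cref{Gas} to $\hat{V}$. For Zariski density, I would use that $\cH$ is translation-invariant on the abelian variety $X = \dA\times\dA$, so the formal leaf $\hat{V}$ through $e$ is a formal subgroup of $\hat{X}_{/e}$. Let $Y$ be its Zariski closure in $X$, and let $B$ denote the identity component of the smallest algebraic subgroup of $X$ whose formal completion at $e$ contains $\hat{V}$; such a $B$ is an abelian subvariety with $T_e B \supset H$, so $\dim B \geq g$. Since $\dA$ is simple, by Poincar\'e complete reducibility every abelian subvariety of $X = \dA\times\dA$ has dimension $0$, $g$, or $2g$, and every $g$-dimensional one is, up to translation, the graph of some $\phi \in \End^\circ_{\bar K}(\dA)$. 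If $\dim B = g$ then $T_e B = H$, which would force $\bar{s} = d\phi$ and contradict the hypothesis that $\bar{s}$ is not algebraic. Hence $B = X$, so $\hat{V}$ is Zariski dense in $X$.

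For the density bound on the set of bad primes, I would show that any finite place $v$ at which $A$ has good reduction, is unramified, and satisfies $\varphi_v(s) = s$ is automatically not bad. Being bad means that the subbundle $\cH \otimes k_v$, equivalently the translation-invariant subspace $H \otimes k_v \subset \Lie X_{k_v}$, is not closed under the $p$-operation on the restricted Lie algebra $\Lie X_{k_v}$ (the Frobenius on invariant derivations). Unwinding, the graph $H$ is $p$-closed if and only if $\bar s \otimes k_v$ commutes with the $p$-operation on $\Lie \dA_{k_v}$. Via the canonical identification $\Lie \dA \cong H^1_{\mathrm{dR}}(A,L_v)/\mathrm{Fil}^1$ and the Mazur-style compatibility used in the proof of \Cref{Fil0}, this $p$-operation corresponds to the reduction modulo $\fp$ of the crystalline Frobenius $\varphi_v$ on $H^1_{\mathrm{dR}}(A,L_v)/\mathrm{Fil}^1$. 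The relation $\varphi_v(s) = s$ then descends to the compatibility of $\bar s \otimes k_v$ with this $p$-operation, showing that $v$ is not bad.

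Consequently, the set of bad places is contained in the complement of $\{v : \varphi_v(s) = s\}$, and the weighted density of this complement is at most $1 - \beta$ by the very definition of a $\beta$-de Rham--Tate cycle. This gives $\alpha \leq 1 - \beta$, as claimed.

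The main obstacle I anticipate is not Step 1, which is essentially formal once one invokes the simplicity of $\dA$, but Step 2: pinning down the precise dictionary between the crystalline Frobenius action on $s$ and the restricted Lie algebra $p$-operation on $\Lie X_{k_v}$ that governs whether the foliation is $p$-closed. Making this compatibility explicit in the present setting (using the identification of $H^1_{\mathrm{dR}}/\mathrm{Fil}^1$ with $\Lie\dA$ and the behavior of $\varphi_v$ on this quotient) is the only nontrivial point; once it is in place, the density count is immediate from the definition of $\beta$-de Rham--Tate cycles.
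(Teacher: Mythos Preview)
Your approach is essentially the same as the paper's. Part 1 (Zariski density via the classification of abelian subvarieties of $\dA\times\dA$) matches the paper's argument exactly.

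For Part 2, your anticipated obstacle is precisely where the paper's argument is sharper. Rather than working directly with the $p$-operation on $\Lie\dA_{k_v}$ and trying to match it with the induced action of $\varphi_v$ on $H^1_{\mathrm{dR}}/\mathrm{Fil}^1$, the paper passes through the universal vector extension $E(\dA)$: one has $\Lie E(\dA)=H^1_{\mathrm{dR}}(A)$, and by \cite{Mum}*{p.~138} the $p$-th power map on $\Lie E(\dA)\otimes k_v$ \emph{is} $\varphi_v\otimes k_v$. Hence $\varphi_v(s)=s$ immediately gives that the graph $\{(a,s(a))\}\subset \Lie(E(\dA)\times E(\dA))\otimes k_v$ is $p$-closed, and then one simply projects to the quotient $\Lie(\dA\times\dA)\otimes k_v$ to conclude that $H\otimes k_v$ and $\cH\otimes k_v$ are $p$-closed. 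This bypasses the need to establish the compatibility you were worried about on $\Lie\dA$ directly; it is the missing ingredient in your Step~2.
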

\begin{proof}
The Zariski closure $G$ of $\widehat{V}$ must be an algebraic subgroup of $X$. The simplicity of $A$ implies that the only algebraic subgroup of $X$ with dimension larger than $g$ must be $X$. Hence if $\bar{s}$ is not algebraic, we have $\dim G>g$ and hence $G=X$. 

Let $E(\dA)$ denote the universal vector extension of $\dA$. The algebraic group $E(\dA)$ is commutative and $\Lie(E(\dA))=H^1_{\rm{dR}}(A/K)$. If $A$ has good reduction at $v$, then this identification extends to an identification of $\cO_v$-modules.
By \cite{Mum}*{p. 138}, given $v\notin \Sigma$, the $p_v$-th power map on $\Lie E(\dA)\otimes k_v=H^1_{\mathrm{dR}}(A/k_v)$ is the same as $\varphi_v\otimes k_v$. Therefore, for those $v$ such that $\varphi_v(s)=s$, we have that the Lie subalgebra $\{(a,s(a))\mid a\in (\Lie E(\dA))\}\otimes k_v$ of $\Lie(E(\dA)\times E(\dA))\otimes k_v$ is closed under the $p_v$-th power map. Then $H=\{(a,\bar{s}(a))\mid a\in \Lie(\dA)\}\otimes k_v$ and $\cH\otimes k_v$ are closed under the $p_v$-th power map.  Therefore, the density of bad primes is at most one minus the density of primes satisfying $\varphi_v(s)=s$.
\end{proof}

\begin{para}
Let $\sigma:K\rightarrow \bC$ be an archimedean place of $K$.
We define $\gs$ to be the composition 
$$\gs: \bC^g\xrightarrow{(id,\bar{s})}\bC^g\times \bC^g\xrightarrow{(\exp,\exp)}X_\sigma(\bC),$$ where $\exp$ the uniformization of $\bC^g=\Lie\dA_\sigma(\bC) \rightarrow \dA_\sigma(\bC)$. We choose an ample Hermitian line bundle $\cL$ on $\dA$ such that the pull back of its first Chern form via $\exp$ is $iC_0\sum_{k=1}^{g} dz_k\wedge d\bar{z}_k$ where $C_0>0$ is some constant. More explicitly, we may choose $\cL$ to be the theta line bundle with a translation-invariant metric. See for example \cite{RdJ}*{Sec. 2}. 

To compute the order of $\gs$, we fix the ample Hermitian line bundle on $X$ to be $pr_1^*\cL\otimes pr_2^*\cL$. Then $$\gs^*\eta=C_0(i\sum_{k=1}^{g} dz_k\wedge d\bar{z}_k+s^*(i\sum_{k=1}^{g} dz_k\wedge d\bar{z}_k)).$$ Thus $\gs^*\eta$ has all coefficients of $dz_i\wedge d\bar{z}_j$ being constant functions on $\bC^g$. 
\end{para}

\begin{lemma}
The order $\rs$ of $\gs$ is at most $2$. In other words, $\rho\leq 2$.
\end{lemma}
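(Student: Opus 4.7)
The plan is to exploit the fact, already noted in the paper, that $\gamma_\sigma^*\eta$ has constant coefficients on $\bC^g$ (this is the point of choosing a translation-invariant metric on the theta line bundle $\cL$). Writing $\gamma_\sigma^*\eta = i\sum_{j,k} a_{jk}\,dz_j \wedge d\bar{z}_k$ with $a_{jk} \in \bC$ constant, it suffices to estimate, for each $(j,k)$, the integral
$$I_{jk}(t) = \int_{B(t)} dz_j \wedge d\bar{z}_k \wedge \omega^{g-1}.$$

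The key step is a one-line scaling argument. Because $\omega = dd^c\log\|z\|^2$ is the pullback via $\pi:\bC^g\setminus\{0\}\to \bP^{g-1}$ of the Fubini--Study form, it is invariant under every dilation $z\mapsto \lambda z$, $\lambda\in \bC^\times$. Under the same dilation, $dz_j \wedge d\bar{z}_k$ is multiplied by $|\lambda|^2$ and the ball $B(t)$ is sent to $B(|\lambda|t)$. Changing variables gives $I_{jk}(|\lambda|t) = |\lambda|^2\,I_{jk}(t)$, so $I_{jk}(t) = c_{jk}\, t^2$ for some constant $c_{jk}$. Hence $\int_{B(t)} \gamma_\sigma^*\eta \wedge \omega^{g-1} = C\, t^2$ for a constant $C$ depending on $\sigma$ (and on the chosen metric, but not on $t$).

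Feeding this into the definition of the characteristic function gives
$$T_{\gamma_\sigma}(r) = \int_0^r \frac{dt}{t}\, Ct^2 = \tfrac{C}{2}\, r^2,$$
which immediately yields $\rho_\sigma = \limsup_{r\to\infty}\log T_{\gamma_\sigma}(r)/\log r \leq 2$. Since the argument is uniform in $\sigma$, we conclude $\rho \leq 2$ as well.

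The only real thing to double-check is a mild technical point: that $I_{jk}(t)$ is actually finite, i.e.\ that $\omega^{g-1}$ is locally integrable against smooth forms near the origin. This is standard, since $\omega$ is the pullback of the smooth Fubini--Study form on $\bP^{g-1}$ and the projection $\pi$ has fibers of real dimension $2$, providing just enough room to absorb the singularity. No serious obstacle arises beyond this bookkeeping.
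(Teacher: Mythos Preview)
Your proof is correct and reaches the same conclusion as the paper, but by a cleaner route. The paper writes out $\omega$ explicitly, bounds the coefficients of $dz_k\wedge d\bar z_l$ in $\omega$ by $2\|z\|^{-2}$, deduces that the coefficient of the top form in $\gamma_\sigma^*\eta\wedge\omega^{g-1}$ is $O(\|z\|^{-2(g-1)})$, and then integrates in spherical coordinates to get $T_{\gamma_\sigma}(r)\leq C_4 r^2$. Your argument replaces this explicit estimate by the single observation that $\omega$ is dilation-invariant (being pulled back from $\bP^{g-1}$) while $dz_j\wedge d\bar z_k$ is homogeneous of degree $2$ under real scaling, so each $I_{jk}(t)$ is exactly $c_{jk}t^2$. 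This gives $T_{\gamma_\sigma}(r)=\tfrac{C}{2}r^2$ as an \emph{equality} rather than an inequality; in particular it already proves the remark the paper makes afterward, that $\rho_\sigma=2$ on the nose (once one notes $C>0$ from positivity of $\eta$). The one point you flag---finiteness of $I_{jk}$ near $0$---is handled by the same $\|z\|^{-2(g-1)}$ bound the paper uses, so there is no hidden issue.
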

\begin{proof}
Up to a positive constant, 
$$\omega=i\frac{||z||^2\sum_{k=1}^g dz_k\wedge d\bar{z}_k-\sum_{k,l=1}^g \bar{z}_kz_l\,dz_k\wedge d\bar{z}_l}{||z||^4}.$$
Since all the absolute values of the coefficients of $dz_k\wedge d\bar{z}_l$ in $\omega$ are bounded by $2||z||^{-2}$ and those in $\gs^*\eta$ are constant functions, the volume form $\gs^*\eta\wedge \omega^{g-1}$ has the absolute value of the coefficient of $\wedge_{k=1}^g(dz_k\wedge d\bar{z}_k)$ to be bounded by $C_1||z||^{-2(g-1)}$ for some constant $C_1$. Hence
\begin{equation*}
\begin{split}
T_\gs(r)&=\int_0^r\frac {dt}t\int_{B(t)}\gs^*\eta\wedge \omega^{g-1}\\
&\leq \int_0^r\frac {dt}t\int_{B(t)}C_1||z||^{-2(g-1)}(i^g)\wedge_{k=1}^g(dz_k\wedge d\bar{z}_k)\\
&=\int_0^r\frac {dt}t\int_0^t C_2 R^{-2(g-1)} vol(S(R)) dR\\
&=\int_0^r\frac {dt}t\int_0^t C_3 RdR=C_4r^2,
\end{split}
\end{equation*}
where $S(R)$ is the sphere of radius $R$ in $\bC^d$. We conclude by the definition of orders.
\end{proof}

\begin{rem}
By a more careful argument, one can see that $\rs$ is $2$.
\end{rem}

\begin{proof}[Proof of \Cref{3/4}]
If $\bar{s}$ is not algebraic, then we apply \Cref{Gas} with $N=2g$ and $d=g$. We have
$$1\leq 2\rho\alpha\leq 2\cdot 2\cdot (1-\beta),$$ which contradicts with $\beta>\frac34$.
\end{proof}

\subsection{Abelian surfaces}
We see from the discussion in \cref{connected} that the only case left for \Cref{conj_main} for abelian surfaces is when $\End_{\bar{K}}(A)=\bZ$ and $K$ is of even degree over $\bQ$. We discuss in this section the case when $K$ is a quadratic extension of $\bQ$ and remark that one can deduce similar results when $[K:\bQ]$ is $2n$ for some odd integer $n$ by incorporating arguments as in the proof of \Cref{EndZ}.

Assume that $A$ does not satisfy \Cref{conj_main}. Then by \Cref{decomp}, we have $H^1_{\mathrm{dR}}(A/\bar{K})=V_1\oplus V_2$ as a $G^\circ_{\aT}$-representation, where $V_1$ and $V_2$ are irreducible representations of dimension $2$. 

Let $\ub$ be the inferior density of good primes of $pr_1$ $$\displaystyle \liminf_{x\rightarrow \infty}\left(\sum_{v|p_v\leq x, \varphi_v(pr_1)=pr_1}\frac{[L_v:\bQ_{p_v}]\log p_v}{p_v-1}\right)\left([L:\bQ]\sum_{p\leq x}\frac{\log p}{p-1}\right)^{-1}$$ and $\ob$ be the superior density $$\displaystyle\limsup_{x\rightarrow \infty}\left(\sum_{v|p_v\leq x, \varphi_v(pr_1)=pr_1}\frac{[L_v:\bQ_{p_v}]\log p_v}{p_v-1}\right)\left([L:\bQ]\sum_{p\leq x}\frac{\log p}{p-1}\right)^{-1}.$$ By \Cref{Tvmax}, for a density one set of split primes $v$ of $K$, we have $\varphi_v\in G^\circ_\aT(K_v)$ and then $\varphi_v(pr_i)=pr_i$ for $i=1,2$. In other words, we have $\frac 12\leq \ub\leq \ob$.

\begin{theorem}
If $A$ does not satisfy \Cref{conj_main}, then $\ub\leq \frac34\leq \ob$. In particular, if the natural density of good primes of $pr_1$ exists, then the density must be $\frac 34$.
\end{theorem}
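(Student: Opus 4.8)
As noted just before the statement it suffices to prove separately that $\ub\le\tfrac34$ and $\ob\ge\tfrac34$; the claim about the natural density is then immediate, since if that density exists it equals both $\ub$ and $\ob$.

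\emph{The inequality $\ub\le\tfrac34$.} The idempotent $pr_1\in\End(H^1_{\mathrm{dR}}(A,\bar K))$ is fixed by $\varphi_v$ precisely at the good primes of $pr_1$, so it is a $\ub$-de Rham--Tate cycle. By \Cref{Bost} every algebraic cycle in $\End(H^1_{\mathrm{dR}}(A,\bar K))$ is a scalar (using that $A_{\bar K}$ is simple with $\End_{\bar K}(A)=\bZ$), whereas $pr_1$ is a nontrivial idempotent; hence $pr_1$ is not algebraic, and \Cref{3/4} rules out $\ub>\tfrac34$. (Equivalently, apply \Cref{w3/4} to the involution $s=pr_1-pr_2$, whose good primes coincide with those of $pr_1$.)

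\emph{The inequality $\ob\ge\tfrac34$.} The plan is to run the argument of \Cref{3/4} on the Weil restriction $B=\Res^K_\bQ A$, an abelian fourfold over $\bQ$ with $B_{\bar\bQ}\cong A_{\bar\bQ}\times A^\sigma_{\bar\bQ}$, where $\sigma$ is the nontrivial automorphism of $K$. Assume first that $A$ is not geometrically isogenous to $A^\sigma$. Labelling the summands of $H^1_{\mathrm{dR}}(A^\sigma,\bar\bQ)$ by $V_i^{(A^\sigma)}=\sigma(V_i^{(A)})$, let $Q'\in\End(H^1_{\mathrm{dR}}(B,\bar\bQ))=\End(H^1_{\mathrm{dR}}(A,\bar\bQ)\oplus H^1_{\mathrm{dR}}(A^\sigma,\bar\bQ))$ be the projection onto $V_1^{(A)}\oplus V_2^{(A^\sigma)}$; since $\End_{\bar\bQ}(B)=\bZ\times\bZ$, the cycle $Q'$ is not algebraic. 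The decisive point, carried out exactly as in the proof of \Cref{TvQ}, is the description of $Frob_p^{(B)}$ on $H^1_{\mathrm{dR}}(A,\bar\bQ)\oplus H^1_{\mathrm{dR}}(A^\sigma,\bar\bQ)$: for $p$ split in $K$ it is block-diagonal, given in each block by a relative Frobenius of $A$, hence fixes $Q'$; for $p$ inert in $K$ it is the interchange of the two summands composed with the $p$-power Frobenius maps between them, and it fixes $Q'$ exactly when the place $v\mid p$ of $K$ is a \emph{bad} prime of $pr_1$. Because all bad primes of $pr_1$ are (up to finitely many) inert and the weighted prime densities over $\bQ$ and over $K$ agree, it follows that $Q'$ is a $\beta$-de Rham--Tate cycle of $B$ with $\beta=\tfrac12+(1-\ob)=\tfrac32-\ob$.

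It remains to feed $Q'$ into \Cref{Gas}. Let $\widehat V$ be the formal leaf through the identity of $\dB\times\dB$ determined by $\bar{Q'}\in\End(\Lie\dB)$ via translation. Writing $\dB=\dA\times\widehat{A^\sigma}$, its two projections are the formal leaves of $\overline{pr_1^{(A)}}$ in $\dA\times\dA$ and of $\overline{pr_2^{(A^\sigma)}}$ in $\widehat{A^\sigma}\times\widehat{A^\sigma}$; these are Zariski dense by \Cref{N=2g} applied to the simple varieties $A$ and $A^\sigma$ (both with endomorphism ring $\bZ$), and since $A$ and $A^\sigma$ share no isogeny factor, every abelian subvariety of $\dB\times\dB$ is a product, which forces $\widehat V$ to be Zariski dense in $\dB\times\dB$. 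As in \Cref{3/4}, the relevant pullbacks of a translation-invariant Kahler form have constant coefficients, so the order of the uniformization is at most $2$. Hence \Cref{Gas} with $N=2\dim\dB=8$, $d=\dim\dB=4$ and $\rho\le2$ gives $1\le\tfrac{N}{N-d}\rho\alpha=4\alpha$, where $\alpha$ is the density of bad primes of $\widehat V$; since $Frob_p^{(B)}(Q')=Q'$ forces $p$ to be a good prime of $\widehat V$ (Mumford's identification of the $p$-curvature with reduction of Frobenius, as in the proof of \Cref{N=2g}, applies since $B$ is defined over $\bQ$), we obtain $\alpha\le 1-\beta=\ob-\tfrac12$, and therefore $\ob\ge\tfrac34$. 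The remaining case, in which $A$ is geometrically isogenous to $A^\sigma$ (so that $A$ is a ``$\bQ$-abelian variety''), has to be handled separately: there $\End_{\bar\bQ}(B)$ is larger and the Zariski-density step breaks down, and one instead adapts $Q'$ to the isogeny or reduces to a situation covered by \Cref{thm_main}. The principal obstacle in the argument is the precise description of $Frob_p^{(B)}$ at the inert primes and the verification that it fixes $Q'$ exactly at the bad primes of $pr_1$; this is precisely where the analysis underlying \Cref{TvQ} is needed.
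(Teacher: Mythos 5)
Your first inequality ($\ub\le\tfrac34$) is exactly the paper's argument and is fine. For $\ob\ge\tfrac34$, however, your Weil-restriction route has a genuine gap: the case in which $A_{\bar\bQ}$ \emph{is} isogenous to $A^\sigma_{\bar\bQ}$ is not handled. This case is perfectly possible for an abelian surface over a quadratic field with $\End_{\bar K}(A)=\bZ$, and your two suggested escapes do not work: \Cref{thm_main} cannot be invoked, since its case (3) requires $[K:\bQ]$ prime to $g!=2$, i.e.\ it excludes precisely the quadratic-field surfaces this theorem is about (independently of whether $A\sim A^\sigma$); and ``adapting $Q'$ to the isogeny'' is not an argument --- once $\Hom_{\bar\bQ}(A,A^\sigma)\neq 0$, the Zariski closure of your formal leaf in $\dB\times\dB$ need no longer be everything, and the whole dimension count $N=8$, $d=4$ feeding into \Cref{Gas} collapses. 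A secondary weakness is that your ``decisive point'' --- that the absolute Frobenius of $B$ at an inert prime exchanges the two de Rham summands and fixes $Q'$ exactly at the bad primes of $pr_1$ --- is only asserted, and the reference to \Cref{TvQ} does not supply it: that proposition concerns ranks of Frobenius tori via the $\ell$-adic monodromy of $\Res^K_\bQ A$, not the semilinear action on the de Rham decomposition. What is really needed there is just the $\sigma$-semilinearity of $\varphi_v$ over $K_v$ at inert $v$, together with \Cref{decomp}.

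That semilinearity is in fact all the paper uses, and it lets one avoid the Weil restriction entirely: choose $\theta\in K$ with $\sigma(\theta)=-\theta$ and consider $s=\theta pr_1-\theta pr_2\in\End(H^1_{\mathrm{dR}}(A,\bar K))$. At split $v$ (density $\tfrac12$, almost all good for $pr_1$ by \Cref{Tvmax}) $\varphi_v$ is $K_v$-linear and fixes $s$; at an inert $v$ that is bad for $pr_1$, $\varphi_v$ sends $\theta\mapsto-\theta$ and swaps $pr_1,pr_2$ by \Cref{decomp}, so again $\varphi_v(s)=s$. Hence $s$ is a $(\tfrac32-\ob)$-de Rham--Tate cycle on $A$ itself, and since $A_{\bar K}$ is simple, \Cref{3/4} applies directly: $\ob<\tfrac34$ would make $s$ algebraic, contradicting $\End_{\bar K}(A)=\bZ$. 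This handles all cases uniformly, with no new Zariski-density verification and no case split on whether $A\sim A^\sigma$; if you want to salvage your approach, the isogenous case is the point you must actually address.
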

\begin{proof}
By definition, $pr_1$ is a $\ub$-absolute-Tate cycle. If $\ub>\frac 34$, then by \Cref{3/4}, we have $pr_1$ is algebraic. As $\End_{\bar{K}}(A)=\bZ$, this is a contradiction. Therefore, $\ub\leq \frac 34$.

Let $\theta\in K$ be an element such that $\sigma(\theta)=-\theta$, where $\sigma$ is the nontrivial element in $\Gal(K/\bQ)$. We consider $\theta pr_1-\theta pr_2\in \End(H^1_{\mathrm{dR}}(A/\bar{K}))$. By \Cref{decomp}, if $\varphi_v(pr_1)\neq pr_1$, then $\varphi_v(pr_1)=pr_2$ and $\varphi_v(pr_2)=pr_1$. By the $\sigma$-linearity of $\varphi_v$, when $v$ is inert, we have that if $\varphi_v(pr_1)\neq pr_1$, and then $\varphi_v(\theta pr_1-\theta pr_2)=\theta pr_1-\theta pr_2$. For $v$ split, we have $\varphi_v(pr_i)=pr_i$ and hence $\varphi_v(\theta pr_1-\theta pr_2)=\theta pr_1-\theta pr_2$. Then by definition, $\theta pr_1-\theta pr_2$ is a $(\frac 32-\ob)$-absolute-Tate cycle. By \Cref{3/4}, if $\ob<\frac 34$, then $\theta pr_1-\theta pr_2$ is algebraic, which contradicts the assumption that $\End_{\bar{K}}(A)=\bZ$.
\end{proof}

\begin{bibdiv}
\begin{biblist}

\bibselect{bib}

\end{biblist}
\end{bibdiv}

\end{document}